\colorlet{refkey}{pink!90!red}
\colorlet{labelkey}{JungleGreen!80!yellow}
\newtheorem{thm}{Theorem}[section]
\crefname{thm}{Theorem}{Theorems}
\newtheorem{lem}{Lemma}[section]
\crefname{lem}{Lemma}{Lemmas}
\newtheorem{cor}{Corollary}[section]
\crefname{cor}{Corollary}{Corollaries}
\theoremstyle{definition}
\crefname{assump}{Assumption}{Assumptions}
\crefname{rem}{Remark}{Remarks}
\crefname{defi}{Definition}{Definitions}
\numberwithin{equation}{section}
\newcommand{\bR}{\mathbb{R}}
\newcommand{\bN}{\mathbb{N}}
\newcommand{\cT}{\mathcal{T}}
\DeclareMathOperator{\diam}{diam}
\DeclareMathOperator{\dist}{dist}
\DeclareMathOperator{\supp}{supp}
\newcommand{\bdelta}{\bar{\delta}}
\newcommand{\OmghOmg}{{\Omega_h \setminus \Omega}}
\newcommand{\OmgOmgh}{{\Omega \setminus \Omega_h}}
\newcommand{\Omegah}{{\Omega_h}}
\newcommand{\dOmegah}{{\partial\Omega_h}}
\newcommand{\nh}{{n_h}}
\newcommand{\tGamma}{\tilde{\Gamma}}
\newcommand{\tu}{\tilde{u}}
\newcommand{\tw}{\tilde{w}}
\newcommand{\tf}{\tilde{f}}
\newcommand{\tg}{\tilde{g}}
\newcommand{\tz}{\tilde{z}}
\newcommand{\Omegahj}{{\Omega_{h,j}}}
\newcommand{\Qhj}{{Q_{h,j}}}
\newcommand{\trpl}{|\kern-0.25ex|\kern-0.25ex|}
\newcommand{\trplnorm}[2]{\trpl #1 \trpl_{#2}}
\newcommand{\cK}{\mathcal{K}}
\title[$L^\infty$-error estimates for parabolic FEM]{Maximum norm error estimates for the finite element approximation
of parabolic problems on smooth domains}
\author[Takahito Kashiwabara]{Takahito Kashiwabara}
\address{Graduate School of Mathematical Sciences, The University of Tokyo, 3-8-1 Komaba, Meguro, 153-8914 Tokyo, Japan}
\email{tkashiwa@ms.u-tokyo.ac.jp}
\author[Tomoya Kemmochi]{Tomoya Kemmochi}
\address{Department of Applied Physics,
Graduate School of Engineering, Nagoya University,
Furo-cho, Chikusa-ku, Nagoya, 464-8603, Aichi, Japan}
\email{kemmochi@na.nuap.nagoya-u.ac.jp}
\urladdr{https://t-kemmochi.github.io/}
\date{}
\thanks{The first author was supported by JSPS Grant-in-Aid for Young Scientists B (No.\ 17K14230).}
\begin{document}
\maketitle

\begin{abstract}
In this paper, we consider the finite element approximation for a parabolic problem on a smooth domain $\Omega \subset \bR^N$ with the inhomogeneous Neumann boundary condition.
We emphasize that the domain can be non-convex in general.
We implement the finite element method for this problem by constructing a family of polygonal or polyhedral domains $\{ \Omegah \}_h$ that approximate the original domain $\Omega$.
The main result of this study is the $L^\infty$-error estimate for this approximation.
We shall show that the convergence rate is not optimal for higher order elements since the symmetric difference $\Omega \bigtriangleup \Omegah$ is not empty in general.
In order to address the effect of the symmetric difference of domains, we introduce the tubular neighborhood of the original boundary $\partial\Omega$.
We will also present a slightly new approach to establish the $L^\infty$-error estimate.
Moreover, we present the smoothing property for the discrete parabolic semigroup and the spatially discretized maximal regularity as corollaries of the main result.
\end{abstract}

\section{Introduction}


In this paper, we consider the finite element method (FEM) for a parabolic problem on a bounded domain $\Omega \subset \bR^N$ with general $N \in \bN$, which can be non-convex.
We assume that the boundary $\partial\Omega$ is sufficiently smooth.
The target problem of the present paper is the parabolic equation on $\Omega$:
\begin{equation}
\begin{cases}
\partial_t u + Au = f, & \text{in } \Omega \times (0,T) =: Q_T, \\
\partial_n u = g, & \text{on } \partial\Omega \times (0,T) =: \Sigma_T, \\
u(0) = u_0, & \text{in } \Omega,
\end{cases}
\label{eq:parabolic}
\end{equation}
where $A = -\Delta + 1$, $f \colon \Omega \times (0,T) \to \bR$,
$g \colon \partial\Omega \times (0,T) \to \bR$, $u_0 \colon \Omega \to \bR$,
and $\partial_n$ denotes the outward normal derivative on $\partial\Omega$.
Although we can consider general (strongly) elliptic operators with smooth coefficients, we here address the operator $-\Delta + 1$ for simplicity.
We assume that given data $f$, $g$, and $u_0$ are sufficiently smooth.

The main purpose of the present study is the $L^\infty$-error estimate for the finite element semi-discretization of \eqref{eq:parabolic}.
In order to implement FEM on a smooth domain $\Omega$, we first approximate $\Omega$ by a polygonal domain.
Let $\Omega_h \subset \bR^N$ be a polygonal (or polyhedral) domain whose vertices lie on $\partial\Omega$.
We construct a conforming, shape-regular, and quasi-uniform triangulation $\cT_h$ of $\Omega_h$,
which is a family of open triangles (simplices in general) in $\Omegah$,
and we set $h_K = \diam K$ and $h = \max_{K \in \cT_h} h_K$.
We emphasize that $\Omega \bigtriangleup \Omega_h \ne \emptyset$ in general, where $\Omega \bigtriangleup \Omega_h$ is the symmetric difference or the \textit{boundary-skin layer}.
Then, we define $V_h \subset H^1(\Omega_h)$ as the conforming $P^k$-finite element space associated with $\cT_h$ for $k \ge 1$.
Now, the finite element approximation for \eqref{eq:parabolic} can be formulated as follows.
Find $u_h \in C^0([0,T]; V_h)$ that satisfies
\begin{equation}
\begin{cases}
(u_{h,t}(t), v_h)_{\Omega_h} + a_\Omegah(u_h(t), v_h) = (\tilde{f}(t), v_h)_{\Omega_h} + (\tilde{g}(t), v_h)_{\partial\Omega_h} , & \forall v_h \in V_h, \\
u_h(0) = u_{h,0},
\end{cases}
\label{eq:disc-parabolic}
\end{equation}
for each $t \in (0,T)$, where $u_{h,0} \in V_h$ is a given initial function and the bracket $(\cdot, \cdot)_D$ denotes the usual $L^2$-inner product over $D \subset \bR^N$ and $a_D(u,v) := (\nabla u, \nabla v)_D + (u,v)_D$.
Here, and hereafter, $\tf$ denotes an appropriate extension of $f$ in the sense of the Sobolev spaces.
Although the extension map can be different up to the regularity of the function, we will use the same notation.
This procedure is adopted in basic softwares for FEM such as FreeFEM++~\cite{Hec12} and FEniCS~\cite{LogMW12}, and thus it is important to investigate stability and error estimates for the approximation scheme \eqref{eq:disc-parabolic}.

One of our main results is the error estimate
\begin{multline}
\| \tu - u_h \|_{L^\infty(Q_{h,T})}
\le C h^{k+1} |\log h|^{\underline{k}} \| u \|_{L^\infty(0,T;W^{k+1,\infty}(\Omega))}  \\
+ C h^2 |\log h| \left(
\| u_t \|_{L^\infty(Q_T)} + \| u \|_{L^\infty(0,T;W^{2,\infty}(\Omega))}
\right),
\label{eq:intro-main}
\end{multline}
provided that $u \in W^{1,\infty}(0,T;L^\infty(\Omega)) \cap L^\infty(0,T;W^{k+1,\infty}(\Omega))$,
where $\underline{k} = 1$ if $k=1$ and $\underline{k} = 0$ otherwise.
The error estimate shall be given in a more general form (\cref{thm:main,cor:rate}).
The second line of \eqref{eq:intro-main} reflects the effect of the boundary-skin layer $\Omega \bigtriangleup \Omegah$.
Indeed, it does not appear if $\Omegah = \Omega$ (see e.g.,~\cite{SchTW98}).
The estimate \eqref{eq:intro-main} implies that the convergence rate of the scheme \eqref{eq:disc-parabolic} is $O(h^2|\log h|)$ even for higher order elements, since we are approximating the boundary by ``piecewise linear'' shapes.

In addition to the error estimate \eqref{eq:intro-main}, we shall show the smoothing property and maximal regularity for the discrete Laplace operator $A_h$,
as discussed in \cite{SchTW98,ThoW00,Gei06,Li15}  (see \cref{thm:semigroup,thm:dmr} and Section~\ref{sec:cor}).
Here, we define $A_h$ by
\begin{equation}
(A_h u_h, v_h)_{\Omega_h} = a_\Omegah(u_h,v_h),
\quad \forall u_h,v_h \in V_h,
\label{eq:disc-lap}
\end{equation}
which is a discrete analog of Green's formula.
We shall show that the estimate
\begin{equation}
\| u_h(t) \|_{L^q(\Omega_h)} + t \| \partial_t u_h(t) \|_{L^q(\Omega_h)} \le C e^{-ct} \| u_{h,0} \|_{L^q(\Omega_h)},
\quad \forall t > 0,
\end{equation}
holds for $q \in [1,\infty]$ when $f \equiv 0$ and $g \equiv 0$ (\cref{thm:semigroup}), and
\begin{equation}
\| A_h u_h \|_{L^p(0,T; L^q(\Omega_h))} + \| \partial_t u_h \|_{L^p(0,T; L^q(\Omega_h))} \le C \| f_h \|_{L^p(0,T; L^q(\Omega_h))},
\end{equation}
holds for $p,q \in (1,\infty)$ when $u_{h,0} \equiv 0$, $g \equiv 0$, and $\tf = f_h \in L^p(0,T;V_h)$ (\cref{thm:dmr}).
In these estimates, the effect of the boundary-skins can be considered as just perturbation,
and thus we can obtain the same estimates as in \cite[Theorem~2.1]{SchTW98} and \cite[Theorem~3.2]{Gei06}.

In the context of FEM, the domain $\Omega$ is usually assumed to be a polygonal or polyhedral domain so that triangulations can be exactly implemented.
However, it is known that the regularity of the solution cannot be guaranteed if there exist corners in the boundary of the domain (see e.g.~\cite{Gri85}).
Lack of regularity of solutions is troublesome in numerical analysis for partial differential equations, especially for nonlinear problems.
For example, in \cite{Sai12,ZhoS17}, finite element and finite volume schemes for the Keller-Segel system on polygonal domains are considered.
In their error estimates (\cite[Theorem~2.4]{Sai12} and \cite[Theorem~3.1]{ZhoS17}), the convergence rate in $L^\infty(0,T; L^p(\Omega))$-norm is $O(h^{1-N/p})$, in contrast to the expected rate $O(h)$, where $h$ is the mesh size.
This shortcoming is caused by the corner singularity of the boundary.
Indeed, it is shown that the convergence rate is $O(h)$ if the boundary is smooth \cite[Section~5.1]{Sai12}.

In view of the theory of nonlinear partial differential equations, appropriate regularity, such as smoothing property and maximal regularity, is essential for analysis of equations.
Therefore, it is natural to assume the boundary is smooth, and consequently,
it is important to consider FEM for such problems.
Moreover, keeping application to nonlinear evolution equations in mind, it is valuable to derive error estimates in various norms such as $L^\infty(Q_T)$ and $L^p(0,T;L^q(\Omega))$.
Indeed, there are many results on FEM for parabolic problems that have succeeded in deriving error estimates in the framework of analytic semigroups (e.g., \cite{EriJL98,Sai12,ZhoS17}) and maximal regularity (e.g., \cite{Gei07,LiS15,LeyV17,KemS17}).

In the literature of FEM, 
there are several strategy
to overcome the loss of accuracy induced by the corner singularity of the boundary.
The classical one is using the isoparametric FEM \cite{Cia78}.
However, this method requires delicate analysis, especially for the higher order and higher dimensional cases.
Recently, the isogeometric analysis (IGA) \cite{CorHB09,BazTT13} is widely used to solve partial differential equations on smooth domains, which is based on the NURBS basis \cite{PieT97,Far99}.
This method can represent the boundary exactly for a class of domains and thus there is no need to consider errors on approximation of the boundary.
It has, nevertheless, a problem on numerical quadrature since this method is based on coordinate transformations by rational functions.
Therefore, we should take a great care of errors on numerical quadrature.
An alternative approach is to modify the bilinear form with a usual triangulation mentioned above or a so-called background mesh (e.g.,~\cite{CocS12,CocQS14,CocS14,MaiS17,BocCPG17}).
These methods are implementable and give optimal order estimates.
However, the implementation requires more information on the geometry of the boundary such as normal vectors.
In contrast to these studies, we address the simplest scheme \eqref{eq:disc-parabolic}.

There are many studies on the $L^\infty$-analysis for FEM for parabolic problems (e.g., \cite{BraSTW77,SchTW98,ThoW00,Li15,LeyV16} and references therein).
In particular, \cite{SchTW98} gives a general method for $L^\infty$-analysis of FEM for parabolic problems via the regularized Green's function.
All of them assume that the boundary condition is homogeneous and the domain is smooth and convex.
For the Dirichlet condition (e.g., \cite{BraSTW77,ThoW00}), they consider a family of polynomial (or polyhedral) domains $\{ \Omega_h \}_h$ whose vertices lie in $\partial\Omega$, and introduced a space of piecewise polynomials associated with a triangulation of $\Omega_h$ that vanishes on $\partial\Omega_h$.
Then, they extend each functions in such a space by zero in $\Omega \setminus \Omega_h$.
Therefore, piecewise polynomials can be viewed as functions in $H^1_0(\Omega)$, yet this procedure is available for convex domains and for homogeneous Dirichlet problems.
For the Neumann problems (e.g., \cite{SchTW98,Li15}), they assumed that the domain is exactly triangulated.
That is, they extended piecewise polynomial functions by considering pie-shaped element near the boundary.
However, this extension is unavailable for the three-dimensional case, even if the domain is convex as pointed-out in \cite[page~1356]{SchTW98}.
The same assumptions are imposed in the literature on discrete maximal regularity on smooth domains \cite{Gei06,Gei07,Li15}.


In contrast to these studies, we never assume that $\Omega$ is convex and thus $\Omega \bigtriangleup \Omegah \ne \emptyset$.
Therefore, we should take care of the effect of boundary-skins, as mentioned above.
In order to address the integration over $\Omega \bigtriangleup \Omegah$, we introduce the tubular neighborhood of $\partial\Omega$.
As in the analysis of FEM for elliptic equations, the Galerkin orthogonality (or compatibility) is essential in $L^\infty$-analysis for FEM of parabolic problems (cf.~\cite{SchTW98}).
However, since $\Omega \bigtriangleup \Omegah \ne \emptyset$, it does not hold in general and there appear additional terms (see \cref{lem:ago}).
We shall address these terms using the tubular neighborhood as in the elliptic case discussed in our previous paper \cite{KasK17}.
This procedure is available for inhomogeneous Neumann boundary conditions, in contrast to previous work addressing the homogeneous case only.

The main strategy of the proof of \eqref{eq:intro-main} is similar to \cite{SchTW98}.
That is, we introduce the regularized delta function, regularized Green's function $\Gamma$, and its finite element approximation $\Gamma_h$.
Then, we reduce the $L^\infty$-error estimate to the $L^1$-type estimates for $F = \Gamma_h - \tGamma$ (\cref{lem:green}).
We will introduce a parabolic dyadic decomposition $Q_{h,j}$ (see \eqref{eq:dyadic1}) and we address the norms of $F$ over each $Q_{h,j}$.
However, in the proof of the estimates for $F$, we shall take a slightly different approach.
In \cite{SchTW98}, they also introduce the parabolic dyadic decomposition and consider a local energy error estimate with a kick-back argument.
For this purpose, they show strong super-approximation property for the discrete space $V_h$ \cite[Section~5]{SchTW98}.
The argument of \cite{ThoW00} is similar and they consider a delicate estimate with a special cut-off function \cite[pages~387--388]{ThoW00}.
Finally, local estimates are merged with respect to the dyadic decomposition, and the $L^1$-estimates are obtained.
In contrast to these arguments, we will use the kick-back argument after summation.
Our strategy does not require the strong super-approximation property and special cut-off functions.
Therefore, the present study provides an alternative proof for $L^\infty$-analysis of FEM for parabolic problems.


The rest of this paper is organized as follows.
In Section~\ref{sec:main}, we present our notation and state the main results.
In Section~\ref{sec:outline}, we give the outline of the proof of the main theorem.
The lemmas stated in this section are proved in subsequent sections.
In Section~\ref{sec:preliminaries}, we summarize preliminary results on FEM, tubular neighborhood, and the regularized Green's functions.
The estimates stated in subsection~\ref{subsec:tubular} will be used repeatedly in this paper.
Section~\ref{sec:proof} is devoted to the proof of the $L^\infty$-error estimate.
However, we will postpone the proof of $L^1$-estimates for $F$, which is given in Section~\ref{sec:local}.
As explained above, we shall propose a slightly new approach for the $L^1$-estimates.
In Section~\ref{sec:dual}, we show the local $L^2$-estimates for $F$ by the duality argument.
Finally, we will present the proofs of the smoothing property and the maximal regularity for the discrete elliptic operator $A_h$ in Section~\ref{sec:cor}.
Throughout this paper, the symbol $C$ denote generic constants, which may be different in each appearance.

\section{Notation and main results}
\label{sec:main}

Let $\Omega \subset \bR^N$ be a bounded domain with general $N \in \bN$.
Without loss of generality, we may assume $\diam \Omega \le 1$.
We also suppose that $\partial \Omega$ is sufficiently smooth.
The target problem of the present paper is the parabolic equation \eqref{eq:parabolic} on $\Omega$
with smooth data $f \colon \Omega \times (0,T) \to \bR$,
$g \colon \partial\Omega \times (0,T) \to \bR$, and $u_0 \colon \Omega \to \bR$.
The weak form of the problem \eqref{eq:parabolic} is described as follows.
Find $u \in C^0((0,T);V_h)$ that satisfies
\begin{equation}
\begin{cases}
(u_t(t), v)_\Omega + a_\Omega(u(t), v) = (f(t),v)_\Omega + (g(t),v)_{\partial\Omega}, & \forall v \in H^1(\Omega),\ t \in (0,T), \\
u(0) = u_0,
\end{cases}
\label{eq:weak-parabolic}
\end{equation}
where $(\cdot, \cdot)_D$ denotes the $L^2$-inner product over the domain $D \subset \bR^N$ and
\begin{equation}
a_D(u,v) := (\nabla u, \nabla v)_D + (u,v)_D.
\end{equation}

Let us next consider the finite element approximation of \eqref{eq:parabolic}.
To do that, we first approximate the domain $\Omega$ by polygonal (or polyhedral) domains.
Let $\Omega_h \subset \bR^N$ be a polygonal domain and $\cT_h$ be a triangulation, i.e., family of (open) triangles (simplexes in general), of $\Omega_h$ with $h = \max_{K \in \cT_h} \diam K$.
Throughout this paper, we assume that $\Omega_h$ and $\cT_h$ enjoy the following conditions.
\begin{itemize}
\item All of the vertices of $\partial\Omega_h$ belong to $\partial\Omega$.
\item There is no triangle whose vertex belongs to $\partial\Omega_h \setminus \partial\Omega$.
\item For each simplex $K \in \cT_h$, $K \cap \Omega \ne \emptyset$.
\end{itemize}
Moreover, we suppose that $\cT_h$ is shape-regular and quasi-uniform.
Note that $\Omega_h \bigtriangleup \Omega \ne \emptyset$ in general and the identity
\begin{equation}
\int_\Omega f dx - \int_{\Omega_h} f dx = \int_{\OmgOmgh} f dx - \int_{\OmghOmg} f dx
\label{eq:gap}
\end{equation}
holds.
We also set $Q_{h,T} := \Omega_h \times (0,T)$ 
and $\Sigma_{h,T} := \dOmegah \times (0,T)$. 

Let $V_h \subset H^1(\Omega_h)$ be the conforming $P^k$-finite element space associated with $\cT_h$ ($k \ge 1$).
If $f$, $g$, and $u_0$ are sufficiently smooth, then we can extend these functions over $\Omega_h$ in the sense of Sobolev spaces.
We denote one of such extensions by $\tilde{f}$ and so on.
Then, we can formulate the finite element approximation of \eqref{eq:parabolic} as follows.
Find $u_h \in C^0([0,T]; V_h)$ that satisfies \eqref{eq:disc-parabolic}
for each $t \in (0,T)$ and a given initial function $u_{h,0} \in V_h$.
The main theorem of the present paper is the following $L^\infty$-error estimate for the problem \eqref{eq:disc-parabolic}.
We emphasize that the extension $\tilde{u}$ is arbitrary.

\begin{thm}[Maximum norm error estimate]
\label{thm:main}
Let $\cT_h$ be a shape-regular and quasi-uniform triangulation of $\Omega$.
Let $u$ and $u_h$ be solutions of \eqref{eq:parabolic} and \eqref{eq:disc-parabolic}, respectively, for given data $f$, $g$, $u_0$, and $u_{h,0}$.
Assume $u \in W^{1,\infty}(0,T; L^\infty(\Omega)) \cap L^\infty(0,T; W^{2,\infty}(\Omega))$.
Then, we have
\begin{multline}
\| \tilde{u} - u_h \|_{L^\infty(Q_{h,T})}
\le C \bigg[ \| \tilde{u}_0 - u_{h,0} \|_{L^\infty(\Omega_h)}
+ |\log h|^{\underline{k}} \inf_{\chi \in C^0([0,T]; V_h)} \| \tilde{u} - \chi \|_{L^\infty(Q_{h,T})} \\
+ h^2 |\log h|
\left( \|u\|_{L^\infty(0,T;W^{2,\infty}(\Omega))}
     + \|u_t\|_{L^\infty(0,T; L^\infty(\Omega))} \right)
\bigg],
\label{eq:best}
\end{multline}
where $\underline{k} = 1$ if $k=1$ and $\underline{k} = 0$ otherwise.
Here, the constant $C$ is independent of $h$, $u$, $u_h$, $f$, $g$, $u_0$, $u_{h,0}$ and $T$.
\end{thm}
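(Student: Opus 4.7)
The plan is to follow the regularized Green's function framework of Schatz--Thom\'ee--Wahlbin, adapted so as to absorb the boundary-skin discrepancy $\Omega \bigtriangleup \Omegah$ caused by the polyhedral approximation of a smooth boundary. First I would fix a point $(x_0,t_0) \in \overline{\Omegah} \times [0,T]$ at which $|\tu - u_h|$ essentially attains its maximum value, and introduce a smooth regularized delta $\delta_h$ of width $O(h)$ centered at $x_0$, chosen so that $(\delta_h, v_h)_{\Omegah} = v_h(x_0)$ for every $v_h \in V_h$ up to a higher order error. Let $\Gamma$ solve the backward Neumann parabolic problem on $Q_T$ with terminal data $\delta_h$ at $t = t_0$, and let $\Gamma_h \in C^0([0,t_0];V_h)$ be its semidiscrete Galerkin approximation on $\Omegah$. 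Duality (\cref{lem:green}) then reduces the task to controlling $F := \Gamma_h - \tGamma$ in $L^1(0,t_0;L^1(\Omegah))$ together with related $L^1$-type norms on $\dOmegah$ and in the $H^1$-type norm dual to the best-approximation factor.

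Second, because $\Omega \ne \Omegah$ the usual Galerkin orthogonality fails and must be replaced by the approximate identity of \cref{lem:ago}, which picks up volume integrals over $\OmgOmgh$ and $\OmghOmg$ as well as boundary integrals on the two different boundaries. After inserting an arbitrary $\chi \in C^0([0,T];V_h)$ and integrating by parts in time, the ``nice'' part of the bound takes the form $\|\tu - \chi\|_{L^\infty(Q_{h,T})}$ times a weighted $L^1$-norm of $F$, while the remaining terms involve $\tu$, $\tu_t$ and $\tGamma$ integrated over a tubular neighborhood of $\dOmega$ of width $O(h^2)$. Using the trace and interpolation estimates collected in subsection~\ref{subsec:tubular}, these tubular contributions are bounded by $h^2$ times an $L^1$-type norm of the Green's function, which is logarithmically bounded in $h$; this produces precisely the $h^2|\log h|(\|u\|_{L^\infty(W^{2,\infty})} + \|u_t\|_{L^\infty(L^\infty)})$ term in \eqref{eq:best}.

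Third, the core step is the $L^1$-estimate of $F$. I would introduce the parabolic dyadic decomposition $\{\Qhj\}_j$ from \eqref{eq:dyadic1}, in which $\Qhj$ is a shell at parabolic distance $d_j \sim 2^j h$ from $(x_0,t_0)$, and derive weighted $L^2$-energy estimates for $F$ on each shell. The innermost shell is handled by inverse inequalities and the $L^2$-stability of $\Gamma_h$, while the outer shells exploit the exponential decay of $\Gamma$ away from $(x_0,t_0)$ together with a cut-off commuted through the discrete operator. Instead of applying a kick-back argument inside each shell (as in \cite{SchTW98,ThoW00}), which would force a strong super-approximation property on $V_h$, I would assemble the weighted inequalities over $j$ first and then perform a single kick-back; this is the novelty announced in the introduction. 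The local $L^2$-ingredients fed into this scheme are obtained by the duality argument of Section~\ref{sec:dual}, relying on regularity of the continuous backward problem.

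The main obstacle will be balancing the dyadic weights in this global kick-back so that the summed quantity $\sum_j d_j^{N/2}\|F\|_{L^2(\Qhj)}$ converts to $\|F\|_{L^1}$ with the sharp logarithmic factor $|\log h|^{\underline{k}}$ (a single $\log$ for $k=1$ from the critical shell, none for $k \ge 2$), while simultaneously ensuring that those dyadic shells which intersect $\Omega \bigtriangleup \Omegah$ remain compatible with the tubular-neighborhood estimates of the second step and do not destroy the $h^2$ factor appearing in \eqref{eq:best}.
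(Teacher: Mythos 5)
Your outline follows essentially the same route as the paper: regularized delta and Green's function, asymptotic Galerkin orthogonality replacing exact orthogonality, tubular-neighborhood bounds for the boundary-skin terms, a parabolic dyadic decomposition for $F=\Gamma_h-\tGamma$, and a single global kick-back after summing the dyadic estimates rather than a shell-by-shell kick-back requiring strong super-approximation. The only minor discrepancies are cosmetic: the paper phrases the dual Green's function as a forward problem with initial data $\bdelta$ and pairs against $\Gamma(T-t)$ rather than solving a backward problem, and the discrete delta identity $(\bdelta, v_h)_{K_0}=v_h(x_0)$ holds \emph{exactly} for $v_h\in V_h$ (since $v_h|_{K_0}\in\mathcal{P}^k(K_0)$), not merely up to higher-order error.
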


Since $V_h$ is the space of piecewise polynomials of degree $k$, we can determine the convergence rate from the above estimate.
The rate is not optimal even for higher order elements due to the boundary-skin, in contrast to the convex case \cite{SchTW98}.

\begin{cor}[Convergence rate]
\label{cor:rate}
In addition to the hypotheses in \cref{thm:main}, we assume that $u \in C^0([0,T]; W^{l,\infty}(\Omega))$ for some $2 \le l \le k+1$.
Then, we have
\begin{multline}
\| \tilde{u} - u_h \|_{L^\infty(Q_{h,T})}
\le C \bigg[  h^{l} |\log h|^{\underline{k}} \| u \|_{L^\infty(0,T;W^{l,\infty}(\Omega))} \\
+ h^2 |\log h|
\left( \|u\|_{L^\infty(0,T;W^{2,\infty}(\Omega))}
     + \|u_t\|_{L^\infty(0,T; L^\infty(\Omega))} \right)
\bigg],
\end{multline}
where $C>0$ is independent of $h$, $u$, $u_h$, $f$, $g$, $u_0$, $u_{h,0}$, and $T$.
\end{cor}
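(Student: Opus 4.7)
The plan is to specialize \cref{thm:main} by making concrete choices for the approximants appearing on its right-hand side. The third (boundary-skin) term on the right of \eqref{eq:best} already matches the second line of the desired estimate, so the task reduces to bounding the initial-data error $\| \tilde{u}_0 - u_{h,0} \|_{L^\infty(\Omega_h)}$ and the best-approximation term $\inf_{\chi \in C^0([0,T]; V_h)} \| \tilde{u} - \chi \|_{L^\infty(Q_{h,T})}$ each by $C h^l \| u \|_{L^\infty(0,T; W^{l,\infty}(\Omega))}$.

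For the best-approximation term, I would take $\chi(t) = I_h \tilde{u}(t)$, where $I_h \colon C^0(\overline{\Omegah}) \to V_h$ denotes the standard Lagrange (nodal) interpolation operator associated with $\cT_h$ and the $P^k$-element. Classical finite element interpolation theory, applicable here since $\cT_h$ is shape-regular and quasi-uniform, yields
\begin{equation*}
\| \tilde{u}(t) - I_h \tilde{u}(t) \|_{L^\infty(\Omega_h)} \le C h^l \| \tilde{u}(t) \|_{W^{l,\infty}(\Omega_h)}, \quad 2 \le l \le k+1.
\end{equation*}
Taking the extension $\tilde{u}$ to be Stein's extension, the smoothness of $\partial\Omega$ guarantees stability, $\| \tilde{u}(t) \|_{W^{l,\infty}(\Omega_h)} \le C \| u(t) \|_{W^{l,\infty}(\Omega)}$, uniformly in $h$, and taking the supremum over $t \in [0,T]$ gives the desired bound on the infimum, up to the factor $|\log h|^{\underline{k}}$ already present in \eqref{eq:best}.

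For the initial-data term, I would choose $u_{h,0} := I_h \tilde{u}_0$, yielding $\| \tilde{u}_0 - u_{h,0} \|_{L^\infty(\Omega_h)} \le C h^l \| u_0 \|_{W^{l,\infty}(\Omega)}$, and then $\| u_0 \|_{W^{l,\infty}(\Omega)} = \| u(0) \|_{W^{l,\infty}(\Omega)} \le \| u \|_{L^\infty(0,T; W^{l,\infty}(\Omega))}$ by the assumed $C^0$-continuity in time. Substituting both bounds into \eqref{eq:best} and absorbing constants concludes the proof.

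I do not anticipate a serious obstacle here: the corollary is essentially a bookkeeping exercise combining \cref{thm:main} with standard approximation theory. The only point requiring mild care is ensuring that the Sobolev extension from $\Omega$ to a fixed neighborhood containing every $\Omega_h$ is stable in $W^{l,\infty}$, which is guaranteed by Stein's extension theorem together with the fact that $\Omegah \subset \Omega \cup \OmghOmg$, with $\OmghOmg$ contained in an $O(h)$ tubular neighborhood of $\partial\Omega$ (as treated in \cref{sec:preliminaries}).
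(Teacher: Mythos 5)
Your proof is correct and is the natural specialization of \cref{thm:main}: choose $\chi = I_h\tilde u$ and invoke the nodal-interpolation error together with extension stability; the paper gives no explicit proof of \cref{cor:rate}, so this is exactly the intended reduction. The only point worth noting is that since \cref{cor:rate} drops the term $\|\tilde u_0 - u_{h,0}\|_{L^\infty(\Omega_h)}$, the corollary implicitly presupposes a good choice of $u_{h,0}$; your choice $u_{h,0}=I_h\tilde u_0$ serves, and so would the paper's default $u_{h,0}=P_h\tilde u_0$, since writing $\tilde u_0 - P_h\tilde u_0 = (I-P_h)(\tilde u_0 - I_h\tilde u_0)$ and using the $L^\infty$-stability of $P_h$ from \cref{lem:approx} gives the same $O(h^l)$ bound.
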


According to \cite{SchTW98} and \cite{Gei06}, we can obtain the stability, analyticity, and the (spatially) discrete maximal regularity results for the discrete heat semigroup as follows.
Recall that $A_h$ is the discrete Laplace operator defined by \eqref{eq:disc-lap}.

\begin{thm}[Stability and analyticity of the discrete semigroup]
\label{thm:semigroup}
Let $q \in [1,\infty]$ and $\cT_h$ be a shape-regular and quasi-uniform triangulation of $\Omega$.
Let $u_h$ be the solution of \eqref{eq:disc-parabolic} for $f=0$ and $g=0$.
Then, we have
\begin{equation}
\| u_h(t) \|_{L^q(\Omega_h)} + t \| \partial_t u_h(t) \|_{L^q(\Omega_h)} \le C e^{-ct} \| u_{h,0} \|_{L^q(\Omega_h)},
\quad \forall t > 0,
\label{eq:semigroup}
\end{equation}
where $C>0$ and $c>0$ are independent of $h$, $u_h$, $u_{h,0}$, and $t$.
\end{thm}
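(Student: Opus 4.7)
The plan is to derive \cref{thm:semigroup} as a consequence of \cref{thm:main}, following the approach of \cite{SchTW98,Gei06} where the case $\Omega = \Omegah$ was treated. Our \cref{thm:main} is the analog in the present non-matching setting of the fundamental $L^\infty$-error estimate used in those references, and the boundary-skin contribution enters only as a higher-order perturbation that does not affect the uniform-in-$h$ estimates.

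The $L^2$ case is immediate: the bilinear form $a_{\Omegah}$ is symmetric and coercive on $V_h$ with coercivity constant $1$ thanks to the $+1$ term in $A$, so $A_h$ is self-adjoint and positive definite on $(V_h,(\cdot,\cdot)_{\Omegah})$ with spectrum contained in $[1,\infty)$, and spectral calculus directly yields the claimed analytic-semigroup estimate with $c=1$. For $q = \infty$ I would establish the sectorial resolvent bound
\begin{equation*}
\|(\lambda + A_h)^{-1} f_h\|_{L^\infty(\Omegah)} \le \frac{C}{1+|\lambda|}\,\|f_h\|_{L^\infty(\Omegah)}, \qquad \lambda \in \Sigma_\theta,
\end{equation*}
on a sector of opening $\theta > \pi/2$, uniformly in $h$, by comparing the discrete resolvent equation $(\lambda + A_h)w_h = f_h$ with the continuous one $(\lambda + A)w = \tilde f_h$ on $\Omega$. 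The continuous resolvent satisfies the analogous bound by classical results for smooth Neumann problems, and the discrete-to-continuous comparison is an elliptic variant of \cref{thm:main} where the spectral parameter $\lambda$ is tracked through the regularized Green's function analysis of \cref{sec:proof}. A standard Hankel-contour representation for $e^{-tA_h}$ in terms of the resolvent then delivers the $L^\infty$-analyticity asserted in \eqref{eq:semigroup}.

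The $L^1$ case follows by duality. Let $P_h$ denote the $L^2(\Omegah)$-orthogonal projection onto $V_h$, which is uniformly $L^\infty$-stable on shape-regular quasi-uniform meshes. Self-adjointness of $A_h$ with respect to $(\cdot,\cdot)_{\Omegah}$ then yields, for every $\phi \in L^\infty(\Omegah)$,
\begin{equation*}
(u_h(t), \phi)_{\Omegah} = (u_h(t), P_h\phi)_{\Omegah} = (u_{h,0}, e^{-tA_h} P_h\phi)_{\Omegah} \le C e^{-ct}\,\|u_{h,0}\|_{L^1(\Omegah)}\|\phi\|_{L^\infty(\Omegah)},
\end{equation*}
where the last inequality uses the $L^\infty$-bound just obtained. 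Taking the supremum over $\phi$ gives the $L^1$-estimate for $u_h$, and the same argument applied to $-A_h e^{-tA_h} P_h \phi$ yields the corresponding bound on $\partial_t u_h$. The intermediate cases $q \in (1,\infty) \setminus \{2\}$ then follow by Riesz--Thorin interpolation between $L^1$ and $L^\infty$.

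The principal obstacle is the sectorial $L^\infty$-resolvent estimate. Although the entire machinery developed for \cref{thm:main}---regularized Green's functions, tubular neighborhoods of $\partial\Omega$, and the parabolic dyadic decomposition---is directly applicable, one must verify that the constants remain uniform as $\lambda$ rotates in the sector and that the sharp decay $(1+|\lambda|)^{-1}$ is preserved. This calls for a weighted energy argument in the spirit of \cref{sec:local} in which the weights are chosen on the scale $|\lambda|^{-1/2}$, so that the zero-order term $\lambda w$ in the resolvent equation can be absorbed into the elliptic part without losing the correct rate. Once this is in place, the remainder of the proof is routine.
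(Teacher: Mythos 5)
The easy parts of your proposal match the paper: the $L^2$ case via spectral theory of the self-adjoint, uniformly coercive operator $A_h$, the $L^1$ case by duality against $L^\infty$ using the uniform $L^\infty$-stability of $P_h$ and the self-adjointness of $e^{-tA_h}$, and interpolation to $q\in(1,\infty)$ via Riesz--Thorin are all correct and are exactly what the paper does for the $T\ge 1$ reduction in \cref{subsec:long-time}. The entire content of \cref{thm:semigroup} is therefore concentrated in the $q=\infty$ case, and here your argument has a genuine gap as well as a route that diverges from the paper's.

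For $q=\infty$ the paper stays entirely in the time domain: writing $(e^{-tA_h}u_{h,0})(x_0)=(u_{h,0},\Gamma_h(\cdot,t))_{\Omegah}$ and $t\,\partial_t(e^{-tA_h}u_{h,0})(x_0)=t\,(u_{h,0},\Gamma_{h,t}(\cdot,t))_{\Omegah}$ with $\Gamma_h$ from \eqref{eq:disc-gamma}, it reduces \eqref{eq:semigroup} to $L^1$-bounds on the kernel and its time derivatives. The bound $\|F_t\|_{L^1(Q_{h,T})}\le C$ is \cref{lem:green}, and the paper states in \cref{sec:cor} that the remaining ingredient is $\|tF_{tt}\|_{L^1(Q_{h,T})}\le C$ for $T\le 1$, obtained from the local energy estimate \cref{lem:local} by an argument parallel to \cref{sec:local,sec:dual} (this is the analogue of \cite[Proposition~3.2]{SchTW98}); exponential decay for $T\ge 1$ then comes from \eqref{eq:decay}. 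You instead assert a uniform-in-$h$ sectorial $L^\infty$-resolvent bound $\|(\lambda+A_h)^{-1}\|_{L^\infty\to L^\infty}\le C/(1+|\lambda|)$ and represent $e^{-tA_h}$ by a Hankel contour. That is a legitimate alternative program, but it is not carried out: the resolvent estimate is the entire difficulty, and your justification for it --- ``an elliptic variant of \cref{thm:main} where $\lambda$ is tracked through the regularized Green's function analysis'' plus a ``weighted energy argument'' at scale $|\lambda|^{-1/2}$ --- describes the work without doing it. Nothing in \cref{sec:local,sec:dual} tracks a complex spectral parameter; the dyadic decomposition there is parabolic in $(x,t)$, not elliptic at scale $|\lambda|^{-1/2}$; and the boundary-skin terms ($G_j$, $G_{Q'}$, and their kin) would have to be re-estimated with explicit $\lambda$-dependence to obtain the sharp $(1+|\lambda|)^{-1}$ decay. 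Finally, your opening claim that \cref{thm:semigroup} is ``a consequence of \cref{thm:main}'' misstates the logical structure: the paper does not deduce the semigroup bound from the error estimate; both rest independently on the underlying $L^1$ Green's-function machinery, and \cref{thm:semigroup} in fact requires a further estimate ($tF_{tt}$) beyond what \cref{thm:main} uses.
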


\begin{thm}[Discrete maximal regularity]
\label{thm:dmr}
Let $p, q \in (1,\infty)$ and $\cT_h$ be a shape-regular and quasi-uniform triangulation of $\Omega$.
Let $u_h$ be the solution of \eqref{eq:disc-parabolic} for $u_{h,0}=0$, $g=0$, and $\tilde{f} = f_h \in L^p(0,T; V_h)$.
Then, we have
\begin{equation}
\| A_h u_h \|_{L^p(0,T; L^q(\Omega_h))} + \| \partial_t u_h \|_{L^p(0,T; L^q(\Omega_h))} \le C \| f_h \|_{L^p(0,T; L^q(\Omega_h))},
\label{eq:dmr}
\end{equation}
where $C>0$ is independent of $h$, $u_h$, $f_h$, and $T$.
\end{thm}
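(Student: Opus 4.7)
The plan is to reduce the uniform maximal $L^p$-regularity estimate to the analyticity of the discrete semigroup established in \cref{thm:semigroup}, combined with an abstract characterization of maximal regularity on UMD spaces. With $g=0$, $u_{h,0}=0$, and $\tf=f_h\in V_h$, the scheme \eqref{eq:disc-parabolic} can be recast as the abstract Cauchy problem
\begin{equation*}
\partial_t u_h(t)+A_h u_h(t)=f_h(t),\qquad u_h(0)=0,
\end{equation*}
on the finite-dimensional space $V_h$ equipped with the $L^q(\Omega_h)$-norm. Since $V_h$ is a closed subspace of the UMD space $L^q(\Omega_h)$, its UMD constant is controlled by that of $L^q(\Omega_h)$, which depends only on $q$ and $N$, hence uniformly in $h$.

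Next, \cref{thm:semigroup} tells us that $\{e^{-tA_h}\}_{t\ge 0}$ is a uniformly bounded analytic semigroup on $L^q(\Omega_h)$ for every $q\in[1,\infty]$, with constants independent of $h$. Consequently, the spectrum of $A_h$ lies in a sector strictly contained in the open right half-plane, and the bound $\|\lambda(\lambda+A_h)^{-1}\|_{L^q\to L^q}\le C$ holds uniformly in $h$ and in $\lambda$ belonging to some sector of opening larger than $\pi/2$. For the Hilbert case $q=2$, this analyticity alone already implies maximal $L^p$-regularity through de Simon's theorem, with constants independent of $h$, and the proof is complete.

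For general $q\in(1,\infty)$, the missing ingredient is $R$-boundedness of the scaled resolvent family, uniformly in $h$. I would follow the strategy of Geissert~\cite{Gei06}: starting from the simultaneous $L^1$- and $L^\infty$-contractivity of $e^{-tA_h}$ provided by \cref{thm:semigroup} at the endpoints $q=1$ and $q=\infty$, together with the analyticity estimate and Kahane's contraction principle, one obtains uniform $R$-boundedness of $\{\lambda(\lambda+A_h)^{-1}\}$ on $L^q(\Omega_h)$ with constants depending only on $q$, $N$, and the shape-regularity of $\cT_h$. Weis's theorem, applied in the UMD space $L^q(\Omega_h)$, then yields \eqref{eq:dmr}, while the $T$-independence of the constant is a consequence of the exponential decay factor $e^{-ct}$ appearing in \cref{thm:semigroup}.

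The main obstacle lies in the $R$-boundedness step: a purely finite-dimensional argument at fixed $h$ would give maximal regularity trivially but with constants blowing up as $h\to 0$, so one must carefully exploit the full $L^1$--$L^\infty$ smoothing from \cref{thm:semigroup} in order to extract $R$-bounds with $h$-independent constants. A minor subtlety is that the reduction to the abstract Cauchy problem above is clean only because $\tf=f_h\in V_h$ is assumed, so that no $L^q$-stable projection onto $V_h$ enters the argument; this is a genuine restriction of the statement and it is precisely why the theorem is formulated with $\tf\in L^p(0,T;V_h)$ rather than with a general right-hand side.
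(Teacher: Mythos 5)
There is a genuine gap, and it sits exactly at the point you call ``the main obstacle.''

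The claim that one obtains uniform $R$-boundedness of $\{\lambda(\lambda+A_h)^{-1}\}$ from the $L^1$--$L^\infty$ stability and analyticity of \cref{thm:semigroup} ``together with Kahane's contraction principle'' is not a proof, and the implication is in fact false in general: by the Kalton--Lancien theorem, a bounded analytic semigroup on a UMD space need not have maximal $L^p$-regularity, so analyticity (even uniformly on a whole scale of $L^q$-spaces) does not by itself yield $R$-boundedness. The standard structural hypotheses that close this gap in the abstract theory --- positivity, contractivity, or pointwise Gaussian kernel bounds --- are precisely the ones that are \emph{not} available for the discrete operator $A_h$: the $P^k$-FEM semigroup $e^{-tA_h}$ is in general neither positive nor contractive, and a Gaussian bound with $h$-independent constants for its discrete kernel is not given by \cref{thm:semigroup}; obtaining such a bound is essentially equivalent to the hard kernel estimates the paper develops. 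Your proposal thus asserts the conclusion of the difficult step rather than proving it. You also mischaracterise Geissert's strategy: in \cite{Gei06}, discrete maximal regularity is proved via kernel estimates for the (regularized) discrete Green's function and singular-integral-type arguments, not by an abstract Weis-type $R$-boundedness theorem.

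For comparison, the paper's proof of \cref{thm:dmr} is of the latter, kernel-based type. After reducing to $T\le 1$ (using \cref{thm:semigroup} and the long-time argument) and to $p=q$ (Dore's theorem), it writes $-A_h u_h = \partial_t\Gamma_{x,h}*f_h$ as a convolution against the time derivative of the discrete regularized Green's function, and then uses the central $L^1$-estimate $\|F_t\|_{L^1(Q_{h,T})}\le C$ of \cref{lem:green} to replace $\partial_t\Gamma_{x,h}$ by $\partial_t\tilde\Gamma_x$ up to an $L^p$-bounded perturbation (Young's inequality). The continuous convolution operator $\partial_t\Gamma_x*\cdot$ is then handled by the known smooth-domain argument of \cite{Gei06}, and the remaining boundary-skin correction terms are controlled by Young's inequality together with the Gaussian bounds \eqref{eq:gaussian} and \eqref{eq:DtGamma}. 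So the essential input is \cref{lem:green}, i.e.\ the $L^1$-estimate for $F_t$, rather than any abstract $R$-boundedness result. Your parenthetical observations --- that $q=2$ follows from de Simon's theorem, that $V_h\subset L^q$ inherits the UMD constant, and that the $T$-independence comes from the exponential decay in \eqref{eq:semigroup} --- are correct, but they do not reach the heart of the theorem, which is the quantitative, $h$-uniform kernel estimate encoded in \cref{lem:green}.
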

These two theorems are shown in Section~\ref{sec:cor}.

\section{Outline of the proof}
\label{sec:outline}

In this section, we present the outline of the proof of \cref{thm:main}.
Precise arguments are given in subsequent sections.

As in the previous work on maximum-norm estimates for FEM, we first introduce the regularized delta and Green's functions.
Fix $K_0 \in \cT_h$ and $x_0 \in K_0 \subset \Omega_h$ arbitrarily.
Then, we can construct a smooth function $\bar{\delta} = \bar{\delta}_{x_0} \in C^\infty_0(K_0)$ that fulfills
\begin{equation}
P(x_0) = (P, \bar{\delta})_{K_0}, \quad \forall P \in \mathcal{P}^k(K_0),
\label{eq:delta}
\end{equation}
where $\mathcal{P}^k(K_0)$ is the set of all polynomials of degree at most $k$ over $K_0$.
For construction, see \cite[Appendix]{SchSW96}.
We then define the regularized Green's function $\Gamma$ as the solution of the homogeneous problem
\begin{equation}
\begin{cases}
\partial_t \Gamma + A \Gamma = 0, & \text{in } Q_T, \\
\partial_n \Gamma = 0, & \text{on } \partial\Omega \times (0,T), \\
\Gamma(0) = \bdelta, & \text{in } \Omega.
\end{cases}
\label{eq:gamma}
\end{equation}
Note that $\Gamma \in C^\infty(\overline{Q_T})$ since $\bdelta$ and $\partial\Omega$ are sufficiently smooth.
Furthermore, we define $\Gamma_h$ as the finite element approximation of $\Gamma$ as follows.
\begin{equation}
\begin{cases}
(v_h, \Gamma_{h,t}(t))_{\Omega_h} + a_\Omegah(v_h, \Gamma_h(t)) = 0 , & \forall v_h \in V_h, \ t \in (0,T), \\
\Gamma_h(0) = P_h \bdelta.
\end{cases}
\label{eq:disc-gamma}
\end{equation}
We finally set $F := \Gamma_h - \tilde{\Gamma}$, which is a function defined on $\Omegah$.

Now, let $u \in W^{1,\infty}(0,T; L^\infty(\Omega)) \cap L^\infty(0,T; W^{2,\infty}(\Omega))$ be the solution of \eqref{eq:parabolic} and $u_h \in C^0([0,T]; V_h)$ be that of \eqref{eq:disc-parabolic}.
From the stability result of \cref{thm:semigroup}, we may assume $u_{h,0} = P_h \tu_0$, where $P_h$ is the orthogonal projection in $L^2(\Omegah)$.
Moreover, we may assume $T \le 1$ (see Subsection~\ref{subsec:long-time} for the case $T \ge 1$).
Then, owing to \eqref{eq:delta}, we have
\begin{equation}
(\tilde{u}-u_h)(x_0,T) = (\tilde{u}-P_h\tilde{u})(x_0,T) 
+ ((P_h\tilde{u}-u_h)(T), \bdelta)_{\Omega_h},
\end{equation}
which implies
\begin{equation}
|(\tilde{u}-u_h)(x_0,T)|
\le C \inf_{\chi \in C^0([0,T]; V_h)} \|\tilde{u}-\chi\|_{L^\infty(Q_{h,T})} 
+ |((P_h\tilde{u}-u_h)(T), \bdelta)_{\Omega_h}|
\label{eq:pointwise-0}
\end{equation}
for arbitrary $\chi \in C^0([0,T]; V_h)$, since $P_h$ is uniformly bounded in $L^\infty(\Omega_h)$.

We will address the last term of \eqref{eq:pointwise-0} and 
show the following estimate (cf.~\cref{lem:reduce}, \eqref{eq:E0}, and \cref{lem:Ej}).
We remark that the third line of \eqref{eq:pointwise} is induced by the boundary-skin layer of the domain.
\begin{lem} \label{lem:pointwise}
If $u_{h,0} = P_h \tu_0$, then we have
\begin{multline}
|((P_h\tilde{u}-u_h)(T), \bdelta)_{\Omega_h}| \\
\le C \left( |\log h|^{\underline{k}} + \| F_t \|_{L^1(Q_{h,T})} + h^{-1} |\log h|^{-\underline{k}} \| F \|_{L^1(0,T;W^{1,1}(\Omegah))} \right)
\| \tilde{u} - \chi \|_{L^\infty(Q_{h,T})} \\
+ C h^2 \left( |\log h| + h^{-1} \| F \|_{L^1(0,T;W^{1,1}(\Omegah))} \right)  \left( \| u \|_{L^\infty(0,T;W^{2,\infty}(\Omega))} + \| u_t \|_{L^\infty(Q_T)} \right), 
\label{eq:pointwise}
\end{multline}
for any $\chi \in C^0([0,T]; V_h)$.
\end{lem}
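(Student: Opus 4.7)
The plan is a time-reversed duality argument that represents the pointwise functional $(e(T),\bdelta)_{\Omega_h}$, where $e:=P_h\tilde u-u_h$, as a space-time pairing against the discrete Green's function, from which the three groups of terms in \eqref{eq:pointwise} will emerge naturally after splitting $\Gamma_h=\tilde\Gamma+F$. Set $\psi_h(t):=\Gamma_h(T-t)$, so that by \eqref{eq:disc-gamma} one has $\psi_h(T)=P_h\bdelta$ and the backward discrete equation $(\partial_t\psi_h(t),v_h)_{\Omega_h}=a_{\Omega_h}(\psi_h(t),v_h)$ holds for every $v_h\in V_h$. Because $e(T)\in V_h$ and hence $(e(T),\bdelta)_{\Omega_h}=(e(T),P_h\bdelta)_{\Omega_h}=(e(T),\psi_h(T))_{\Omega_h}$, integrating $\tfrac{d}{dt}(e,\psi_h)_{\Omega_h}$ over $[0,T]$, testing the backward equation for $\psi_h$ against $e(t)\in V_h$, and invoking $e(0)=0$ (ensured by the hypothesis $u_{h,0}=P_h\tilde u_0$), I obtain the representation
\begin{equation*}
(e(T),\bdelta)_{\Omega_h}=\int_0^T\bigl[(\partial_t e(t),\psi_h(t))_{\Omega_h}+a_{\Omega_h}(e(t),\psi_h(t))\bigr]\,dt.
\end{equation*}

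Next I would rewrite the integrand by substituting \eqref{eq:disc-parabolic} for the $u_h$-contributions and testing the weak form \eqref{eq:weak-parabolic} of $u$ against $\psi_h$ restricted to $\Omega$; the mismatch between the two, arising from $a_{\Omega}$ vs.\ $a_{\Omega_h}$ and $(\cdot,\cdot)_{\Omega}$ vs.\ $(\cdot,\cdot)_{\Omega_h}$ as in \eqref{eq:gap}, produces residuals supported on the symmetric difference $\Omega\triangle\Omega_h$ and on the two boundaries $\partial\Omega,\dOmegah$; this is precisely the content of the approximate Galerkin orthogonality \cref{lem:ago}. Inserting an arbitrary $\chi\in C^0([0,T];V_h)$ via $P_h\tilde u-\chi=P_h(\tilde u-\chi)$ recasts the ``interior'' part of the integrand so that it is bounded by $\|\tilde u-\chi\|_{L^\infty(Q_{h,T})}$ times a time integral of $\psi_h$ in $W^{1,1}(\Omega_h)$ and of $\partial_t\psi_h$ in $L^1(\Omega_h)$, while the ``boundary-skin'' part is bounded by $(\|u\|_{L^\infty(0,T;W^{2,\infty}(\Omega))}+\|u_t\|_{L^\infty(Q_T)})$ times integrals of $|\psi_h|$ and $|\nabla\psi_h|$ over tubular neighborhoods of $\partial\Omega$ of width $O(h^2)$.

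Finally, I would decompose $\psi_h(t)=\tilde\Gamma(T-t)+F(T-t)$ and estimate each piece separately. The $\tilde\Gamma$-contributions are controlled by the a priori bounds on the regularized Green's function collected in Subsection~\ref{subsec:tubular}, yielding the $|\log h|^{\underline k}$ prefactor in the first line of \eqref{eq:pointwise} and, through the $O(h^2)$-tubular bound for $\tilde\Gamma$, the $h^2|\log h|$ prefactor in the second line; the $F$-contributions are precisely the $\|F_t\|_{L^1(Q_{h,T})}$ and $\|F\|_{L^1(0,T;W^{1,1}(\Omegah))}$ norms, with an inverse estimate on $V_h$ (converting $L^\infty$-control to weighted $W^{1,1}$-control) producing the $h^{-1}|\log h|^{-\underline k}$ and $h^{-1}$ weights in \eqref{eq:pointwise}. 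The main obstacle is the bookkeeping in step two: lacking exact Galerkin orthogonality, one must carefully isolate normal versus tangential derivatives of $u$ and of $\psi_h$ along the skin so that the tubular-neighborhood estimates of Subsection~\ref{subsec:tubular} apply with the full $h^2$ weight; a na\"ive pairing would degrade the estimate from $h^2$ to $h$ and spoil the advertised rate.
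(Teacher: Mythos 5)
Your roadmap matches the paper's (duality against $\Gamma_h$, asymptotic Galerkin orthogonality, split $\Gamma_h=\tilde\Gamma+F$, tubular-neighborhood estimates), and your starting representation $(e(T),\bdelta)_{\Omega_h}=\int_0^T[(e_t,\psi_h)_{\Omega_h}+a_{\Omega_h}(e,\psi_h)]\,dt$ with $e=P_h\tilde u-u_h$ is correct. But as written the next steps would fail or produce a worse bound, for two reasons. First, \cref{lem:ago} is an identity for $u_h-\tilde u$, not for $e=P_h\tilde u-u_h$: because $P_h$ does not commute with $a_{\Omega_h}$, your integrand contains an unhandled extra term $(\nabla(P_h\tilde u-\tilde u),\nabla\psi_h)_{\Omega_h}$; estimated naively it is $O(h)\|u\|_{W^{2,\infty}}$, which is larger than the paper's $O(h^2|\log h|)$ second line, and one needs a further spatial integration by parts to rescue it. The paper avoids this entirely by working with $\frac{d}{dt}(u_h,\Gamma_h(T-\cdot))$ and only invoking $u_{h,0}=P_h\tilde u_0$ and $(\tilde u(T),P_h\bdelta)=(P_h\tilde u(T),\bdelta)$ at the time endpoints, so $P_h\tilde u$ never enters the bilinear form. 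Second, ``substituting \eqref{eq:disc-parabolic} and testing \eqref{eq:weak-parabolic}'' cannot by itself give a bound by $\|\tilde u-\chi\|_{L^\infty(Q_{h,T})}$ alone: your integrand pairs $e_t$ (hence $(\tilde u-\chi)_t$) with $\psi_h$ and $\nabla e$ with $\nabla\psi_h$, and neither $\|(\tilde u-\chi)_t\|_\infty$ nor $\|\nabla(\tilde u-\chi)\|_\infty$ is controlled. The paper handles this by adding the two null identities \eqref{eq:expression-2} (from the continuous equation for $\Gamma$) and \eqref{eq:expression-3} (AGO for $(\Gamma,\Gamma_h)$), which move the time derivative and the residuals so that the interior term becomes exactly $E_0=\int_0^T[(\tilde u-\chi,F_t)_{\Omega_h}+a_{\Omega_h}(\tilde u-\chi,F)]\,dt$ and eight boundary-skin terms $E_1,\dots,E_7$ fall out; this step, encoded in \cref{lem:reduce}, is the core of the proof and is absent from your sketch.

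Two further inaccuracies are worth flagging. The ``inverse estimate on $V_h$'' you invoke to bound $E_0$ cannot be applied to $\tilde u-\chi$, which is not in $V_h$; what is actually required (this is the content of \cite[Section~3]{SchTW98}, which the paper cites) is to split $F$ into a $V_h$-part $\Gamma_h-I_h\tilde\Gamma$ and an interpolation error $I_h\tilde\Gamma-\tilde\Gamma$, integrate by parts element-wise on the $V_h$-part, and apply inverse and trace inequalities to the element-wise Laplacian and jumps, yielding the weight $h^{-1}$ on $\|F\|_{L^1(0,T;W^{1,1}(\Omega_h))}$ and the $|\log h|^{\underline k}$ prefactor from the interpolation error of $\tilde\Gamma$. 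Finally, your diagnosis of the ``main obstacle'' is only partly right: the $h^2$ factor for the skin terms comes primarily from $\varepsilon=c_0h^2$ in \cref{lem:tube} and from the vanishing exact normal derivative $\partial_n\Gamma=0$ on $\partial\Omega$ (used in the paper's estimate \eqref{eq:tmp1}--\eqref{eq:E2-1}), not from a normal/tangential split of derivatives of $u$; in fact $E_1,E_2$ are only bounded by $C\|\tilde u-\chi\|_{L^\infty}$ with no small factor at all.
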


Therefore, it suffices to show the following estimate, which is addressed in Subsection~\ref{subsec:proof}.
\begin{lem}\label{lem:green}
Assume $T \le 1$.
Then, we have
\begin{equation}
\| F_t \|_{L^1(Q_{h,T})} + h^{-1} |\log h|^{-\underline{k}} \| F \|_{L^1(0,T; W^{1,1}(\Omega_h))}
\le C,
\label{eq:L1-F}
\end{equation}
where $\underline{k}$ is the same symbol as in \cref{thm:main}.
\end{lem}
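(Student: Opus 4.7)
The strategy is to perform a parabolic dyadic decomposition centered at the evaluation point $(x_0, T)$, convert the $L^1$-norms of $F$ and $F_t$ on each piece to $L^2$-norms via Cauchy--Schwarz, and then invoke the dual $L^2$-type local estimates announced in Section~\ref{sec:dual}. The kick-back that absorbs the cross-terms between neighboring pieces will be performed once, after summing over the dyadic levels, rather than locally on each piece as in \cite{SchTW98,ThoW00}.

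Concretely, I would set $d_j = 2^{-j}$ for $j = 0, 1, \dots, J$ with $d_J \sim h$ so that $J \sim |\log h|$, and decompose $Q_{h,T}$ into parabolic shells $Q_{h,j}$ of spacetime measure $|Q_{h,j}| \sim d_j^{N+2}$, together with a core region $Q_{h,*}$ of parabolic radius $\sim h$. On each shell, Cauchy--Schwarz yields
\begin{equation*}
\|F_t\|_{L^1(Q_{h,j})} + d_j^{-1} \|\nabla F\|_{L^1(Q_{h,j})} \le C\, d_j^{(N+2)/2} \bigl( \|F_t\|_{L^2(Q_{h,j})} + d_j^{-1} \|\nabla F\|_{L^2(Q_{h,j})} \bigr).
\end{equation*}
The local $L^2$-estimates from Section~\ref{sec:dual} should take the schematic form
\begin{equation*}
d_j^{(N+2)/2} \bigl( \|F_t\|_{L^2(Q_{h,j})} + d_j^{-1} \|\nabla F\|_{L^2(Q_{h,j})} \bigr) \le C \varepsilon_j + C \sum_{|i-j|\le 1} \tfrac{h}{d_i} \bigl( \|F_t\|_{L^1(Q_{h,i})} + d_i^{-1} \|\nabla F\|_{L^1(Q_{h,i})} \bigr),
\end{equation*}
where $\{\varepsilon_j\}$ is summable with a logarithmic tail precisely when $k = 1$, accounting for the factor $|\log h|^{\underline{k}}$. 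Summing over $j = 0, \dots, J$, the cross-terms can then be transferred to the left-hand side by a global kick-back, since $h / d_J \le C$ and the index shift $|i - j| \le 1$ is bounded. The core region $Q_{h,*}$ contributes an $O(1)$ term through a direct energy estimate that uses $\|\bdelta\|_{L^1(\Omegah)} \le C$ uniformly in $h$ and the $L^1$-stability of the discrete semigroup governing $\Gamma_h$.

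The main obstacle will be proving the local $L^2$-estimates themselves, which is the content of Section~\ref{sec:dual}. Their derivation requires a backward parabolic duality argument on each $Q_{h,j}$ with a suitably localized right-hand side, together with cut-off functions whose commutators with the discrete Laplacian produce precisely the cross-terms on the adjacent shells $Q_{h,j\pm 1}$. The innovation here, compared with \cite{SchTW98,ThoW00}, is that these cross-terms need not be absorbed locally via the strong super-approximation property of $V_h$; instead they are controlled globally after the dyadic summation. This is especially convenient near the boundary-skin $\OmgOmgh \cup \OmghOmg$, where the strong super-approximation property would otherwise be delicate to establish and where the tubular-neighborhood estimates of Section~\ref{sec:preliminaries} are expected to enter in handling the non-coincidence of $\Omega$ and $\Omegah$.
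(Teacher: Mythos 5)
Your high-level strategy — dyadic parabolic decomposition, H\"older to pass from $L^1$- to weighted $L^2$-norms on shells, local $L^2$-estimates, and a \emph{single} global kick-back after summation rather than a shell-by-shell absorption via strong super-approximation — is exactly the paper's stated innovation, and your closing remarks about why this is better suited to the boundary-skin are on target. However, the core of the argument as you have sketched it has a genuine gap: you collapse two distinct lemmas into one schematic estimate that the paper does not, and could not, prove.

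The paper uses two complementary local results. The first (\cref{lem:local-F}, proved in Section~\ref{sec:local}) is a \emph{local energy error estimate}, obtained by testing the asymptotic Galerkin orthogonality with $\omega^2\zeta_h$ and $(\omega^2\zeta_h)_t$ for a smooth cut-off $\omega$. It bounds the higher-order quantities $\theta\trplnorm{F_t}{Q_{h,j}} + d_j^{-1}\trplnorm{F}{1,Q_{h,j}} + \theta\lambda_j\|F(T)\|_{1,D_{h,j}}$ by primed (neighboring) copies of the same quantities with a small constant $C_0\theta$, \emph{plus} $C d_j^{-2}\trplnorm{F}{Q'_{h,j}}$ and approximation/boundary-skin errors $I_j, X_j, G_j$. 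Note the appearance of $\trplnorm{F}{Q'_{h,j}}$, the plain $L^2$-norm of $F$, with the bad weight $d_j^{-2}$ — this cannot be handled by the kick-back and is the reason a second lemma is needed. The second lemma (\cref{lem:duality}, proved in Section~\ref{sec:dual}) is a genuine backward-parabolic duality estimate that bounds $\trplnorm{F}{Q_{h,j}}$ itself, and its cross-terms run over \emph{all} shells $i$, not just $|i-j| \le 1$, weighted by the decaying kernel $\min\{(d_i/d_j)^{N/2+1}, (d_j/d_i)^{N/2+1}\}$. Your schematic estimate misattributes the cut-off argument to Section~\ref{sec:dual}, bounds the wrong quantity (the derivatives, rather than $F$ itself), and restricts the cross-terms to nearest neighbors — none of which matches either lemma. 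In particular your sketch does not explain how the $d_j^{-2}\trplnorm{F}{Q'_{h,j}}$ term is closed, and it omits the trace term $\|F(T)\|_{1,D_{h,j}}$ that arises from the time integration by parts in the $H^1$-$L^2$ step and must itself be kicked back. It also omits that the final closure involves a \emph{second} self-improvement: after the $C_*^{-1}$-kick-back one is left with $\|F\|_{L^1(W^{1,1})} \le Ch|\log h|^{\underline k} + Ch|\log h|\,\|F\|_{L^1(W^{1,1})}$, and one must take $h$ small to absorb the last term.

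Separately, your treatment of the core region $Q_{h,*}$ is circular. You propose to use $\|\bdelta\|_{L^1(\Omegah)}\le C$ together with ``the $L^1$-stability of the discrete semigroup governing $\Gamma_h$.'' But $L^1$-stability of $e^{-tA_h}$ on $\Omegah$ in this non-convex, boundary-skin setting is precisely \cref{thm:semigroup} (with $q=1$), whose proof depends on the very lemma you are proving. The paper instead uses only the elementary $L^2$-energy identity for $\Gamma_h$ (\cref{lem:energy-F}), giving $\|F\|_{L^2(Q_{h,*})} \le Ch^{1-N/2}$ and hence an $O(h)$ contribution to $\|F\|_{L^1(0,T;W^{1,1})}$ and $O(1)$ to $\|F_t\|_{L^1}$ via Cauchy--Schwarz, with no semigroup stability needed. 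Finally, a notational point: the decomposition is centered at $(x_0,0)$ (where $\Gamma(0)=\bdelta$ is singular), not at $(x_0,T)$; the time reversal happens only in the pairing $(u(t),\Gamma(T-t))$ in \cref{lem:reduce}, not in the definition of $Q_{h,j}$.
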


Here, we present the outline of the proof of \cref{lem:green}.
In order to establish \eqref{eq:L1-F},
we introduce the parabolic dyadic decomposition
according to \cite{SchTW98}.
Let $d_j := 2^{-j-1}$ for $j \in \bN_0 = \{0\} \cup \bN$.
We fix $J_* \in \bN$ such that $C_* h \le d_{J_*} \le 2C_* h$ for some $C_* \ge 1$, which is determined later independently of $h$.
By definition, $J_* \approx |\log h|$.
We remark that $h \le C_*^{-1} d_{J_*} \le C_*^{-1} d_j$ and
\begin{equation}
\sum_{j=0}^{J_*} \left( \frac{h}{d_j} \right)^r \le C
\label{eq:sum}
\end{equation}
for $r>0$, where $C$ depends only on $r$.

For the fixed $x_0 \in \Omega_h$ as above,
let $\rho(x,t) := \max\{ |x-x_0| ,\, \sqrt{t} \}$ and
\begin{align}
\Omegahj &= \{ x \in \Omegah \mid d_j \le |x-x_0| \le 2 d_j \}, &
\Omega_{h,*} &= \{ x \in \Omegah \mid |x-x_0| \le d_{J_*} \},
\label{eq:dyadic1}
\\
\Qhj &= \{ (x,t) \in Q_{h,T} \mid d_j \le \rho(x,t) \le 2d_j \}, &
Q_{h,*} &= \{ (x,t) \in Q_{h,T} \mid \rho(x,t) \le d_{J_*} \}.
\label{eq:dyadic2}
\end{align}
Then, it is clear that
\begin{equation}
\Omegah = \left( \bigcup_{j=1}^{J_*} \Omegahj \right) \cup \Omega_{h,*}, \quad
Q_{h,T} = \left( \bigcup_{j=1}^{J_*} \Qhj \right) \cup Q_{h,*}.
\end{equation}
We also set $\Omega'_{h,j} = \Omega_{h,j-1} \cup \Omega_{h,j} \cup \Omega_{h,j+1}$
and $Q'_{h,j} = Q_{h,j-1} \cup \Qhj \cup Q_{h,j+1}$ for later use.
Note that the summation with respect to $Q'_{h,j}$ is controlled in terms of $Q_{h,j}$.
Indeed, one can see 
\begin{equation}
\sum_{j=0}^{J_*} d_j^r \| w \|_{L^2(Q'_{h,j})}
\le 
3 \cdot 2^r \left( d_{J_*}^r \| w \|_{L^2(Q_{h,*})} + 
\sum_{j=0}^{J_*} d_j^r \| w \|_{L^2(Q_{h,j})}
\right) 
\label{eq:summation-Qhj-prime}
\end{equation}
for any $r>0$ by the definition of $Q'_{h,j}$ and $d_j$.

Moreover, in order to address the effect of the boundary-skin, 
we define the tubular neighborhood of the boundary $T(\varepsilon)$ by
\begin{equation}
T(\varepsilon) := \{ x \in \bR^N \mid \dist(x, \partial\Omega) < \varepsilon \}
\label{eq:tubular}
\end{equation}
for $\varepsilon > 0$, 
where $\dist(x,D) = \inf_{y \in D} |x-y|$ for $x \in \bR^N$ and $D \subset \bR^N$.
In fact, we can set $\varepsilon = O(h^2)$ since $\cT_h$ is quasi-uniform (see Subsection~\ref{subsec:tubular}).
Further, we set $L_T(\varepsilon) := T(\varepsilon) \times (0,T)$.

We here introduce space-time norms of $L^2$-type.
For $Q \subset \bR^{N+1}$ and $l \in \bN$, we define
\begin{equation}
\trpl v \trpl_Q := \| v \|_{L^2(Q)},
\quad
\trpl v \trpl_{l,Q} := \sum_{|\alpha| \le l} \| \nabla^\alpha v \|_{L^2(Q)},
\end{equation}
and we also write
\begin{equation}
\| v \|_D = \| v \|_{L^2(D)},
\quad
\| v \|_{l,D} = \| v \|_{H^l(D)}
\end{equation}
for $D \subset \bR^N$.
Then, the $L^1$-norms of $F$ can be bounded by weighted $L^2$-norms by the H\"older inequality and  we have
\begin{equation}
\| F \|_{L^1(0,T; W^{1,1}(\Omegah))} 
\le Ch + C \sum_j d_j^{\frac{N}{2}+1} \trplnorm{F}{1,Q_{h,j}}
\label{eq:outline-3}
\end{equation}
and
\begin{equation}
\| F_t \|_{L^1(Q_{h,T})} 
\le C + C \sum_j d_j^{\frac{N}{2}+1} \trplnorm{F_t}{Q_{h,j}},
\end{equation}
owing to the innermost estimates (cf.~\cref{lem:energy-F}) and $|Q_{h,j}| \approx d_j^{N+2}$.

Local terms
$\trplnorm{F}{1,Q_{h,j}}$ and $\trplnorm{F_t}{Q_{h,j}}$
will be addressed by the following two lemmas.
We again emphasize that the term $G_j$ (and the term involving $F(T)$) in \eqref{eq:local-F} and 
the third line of \eqref{eq:dual} indicate the effect of the boundary-skin layer of the domain.

\begin{lem}\label{lem:local-F}
For arbitrarily small positive number $\theta$, we have
\begin{multline}
\theta \trplnorm{F_t}{Q_{h,j}} + d_j^{-1} \trplnorm{F}{1,Q_{h,j}} + \theta \lambda_j \| F(T) \|_{1,D_{h,j}} \\
\le
  C_0 \theta 
  \left( \theta \trplnorm{F_t}{Q'_{h,j}}
+ d_j^{-1} \trplnorm{F}{1,Q'_{h,j}}
+ \theta \lambda'_j \| F(T) \|_{1,D'_{h,j}} \right)\\
+ C \left( I_j + X_j + G_j \right)
+ C d_j^{-2} \trplnorm{F}{Q'_{h,j}} ,
\label{eq:local-F}
\end{multline}
for some constants $C_0>0$ and $C>0$ independent of $h$, $j$, and $T$, where 
\begin{align}
D_{h,j} &:= Q_{h,j} \cap (\Omegah \times \{ T \}), \quad
D'_{h,j} := Q'_{h,j} \cap (\Omegah \times \{ T \}), \\
\lambda_j &:= \begin{cases}
1, & D_{h,j} \ne \emptyset, \\
0, & \text{otherwise},
\end{cases} \quad
\lambda'_j := \begin{cases}
1, & D'_{h,j} \ne \emptyset, \\
0, & \text{otherwise},
\end{cases} \\
I_j &:= \| F(0) \|_{1, \Omega'_{h,j}} + d_j^{-1} \| F(0) \|_{\Omega'_{h,j}}, \\
X_j &:= d_j \trplnorm{\zeta_t}{1,Q'_{h,j}} + \trplnorm{\zeta_t}{Q'_{h,j}}
+ d_j^{-1} \trplnorm{\zeta}{1,Q'_{h,j}} + d_j^{-2} \trplnorm{\zeta}{Q'_{h,j}}, \\
G_j &:= 
\begin{multlined}[t]
hd_j^{\frac{3}{2}} \trplnorm{\tGamma_{tt} + A\tGamma_t}{L_T(\varepsilon) \cap Q'_{h,j}}
+ hd_j^{-\frac{3}{2}} \trplnorm{\tGamma_t + A\tGamma}{L_T(\varepsilon) \cap Q'_{h,j}} \\
+ d_j^{\frac{3}{2}} \trplnorm{\partial_{n_h} \tGamma_t}{\Sigma_{h,T} \cap Q'_{h,j}}
+ d_j^{-\frac{3}{2}} \trplnorm{\partial_{n_h} \tGamma}{\Sigma_{h,T} \cap Q'_{h,j}}
\end{multlined}
\end{align}
and $\zeta = \tGamma - I_h \tGamma$.
\end{lem}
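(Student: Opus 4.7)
The argument is a localized parabolic energy estimate in the spirit of a Caccioppoli estimate, with two modifications needed in the present setting: $F = \Gamma_h - \tGamma$ satisfies only a \emph{perturbed} Galerkin orthogonality (as in \cref{lem:ago}) because $\Omegah \neq \Omega$, and test functions are constrained to $V_h$, so super-approximation corrections enter. The orthogonality takes the form
\begin{equation*}
(F_t,\chi_h)_\Omegah + a_\Omegah(F,\chi_h) = R_1(\chi_h) + R_2(\chi_h) \qquad (\chi_h \in V_h),
\end{equation*}
where $R_1(\chi_h)$ is the integral of $\tGamma_t + A\tGamma$ against $\chi_h$ on the tubular slab $L_T(\varepsilon)$ and $R_2(\chi_h)$ is the integral of $\partial_{\nh}\tGamma$ against $\chi_h$ on $\Sigma_{h,T}$; these two residuals will ultimately produce the $G_j$ contribution.

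Introduce a smooth space-time cutoff $\omega = \omega_j$ with $\omega \equiv 1$ on $\Qhj$, $\supp\omega \subset Q'_{h,j}$, $|\nabla\omega| \le C d_j^{-1}$, $|\omega_t| \le C d_j^{-2}$, and with $\omega(\cdot,T)$ supported in $D'_{h,j}$ (so its contribution at $t = T$ vanishes whenever $\lambda'_j = 0$). Split $F = e - \zeta$ with $e := \Gamma_h - I_h\tGamma \in V_h$, and carry out two tests. For the first, take $\chi_h := I_h(\omega^2 e_t)$; the super-approximation remainder is controlled in $L^2(0,T;H^1(\Omegah))$ by $Ch$ times a localized norm of $e_t$, contributing only to the kick-back. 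Using $e_t = F_t + \zeta_t$, expanding $a_\Omegah(F, \omega^2 F_t)$, and integrating by parts in time produce
\begin{equation*}
\int_0^T \|\omega F_t\|_\Omegah^2\,dt + \tfrac12 \bigl[\|\omega(t) F(t)\|_{1,\Omegah}^2\bigr]_0^T = (\text{commutator terms}) + (\text{$\zeta$-terms}) + R_1 + R_2.
\end{equation*}

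The commutator terms (from $\omega_t$ and $\nabla\omega$), which are quadratic in $F$ with coefficients of order $d_j^{-1}$ or $d_j^{-2}$, are redistributed by Young's inequality into (i) a partial LHS absorption of $\|\omega F_t\|^2$, (ii) the kick-back piece $C_0\theta(\cdots)_{Q'_{h,j}}$, and (iii) the residue $d_j^{-2}\trplnorm{F}{Q'_{h,j}}$. The $\zeta$-terms collapse into $X_j$ by the product rule and the bound $\|\nabla(\omega^2\zeta)\|_{Q'_{h,j}} \le C(\|\nabla\zeta\|_{Q'_{h,j}} + d_j^{-1}\|\zeta\|_{Q'_{h,j}})$ and its analogue for $\zeta_t$. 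A second test with $\chi_h \approx I_h(\omega^2 e)$, handled identically, controls $d_j^{-1}\trplnorm{F}{1,\Qhj}$; weighting the two estimates by $\theta^2$ and $d_j^{-2}$ respectively and taking square roots reproduces the three-term LHS of \eqref{eq:local-F}. The residuals $R_1, R_2$ are estimated by Cauchy--Schwarz on $L_T(\varepsilon)\cap Q'_{h,j}$ and $\Sigma_{h,T}\cap Q'_{h,j}$; since $\varepsilon = O(h^2)$ (see Subsection~\ref{subsec:tubular}) and the super-approximation supplies an extra factor of $h$, these estimates produce the weights $hd_j^{\pm 3/2}$ and $d_j^{\pm 3/2}$ appearing in $G_j$.

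The main obstacle will be the quantitative bookkeeping of all Young splittings. The cross term $\int(\omega F_t)(F\cdot\nabla\omega)$ and its analogues must be separated so that the coefficient of $\|\omega F_t\|^2$ on the LHS stays strictly positive after absorption, while the only RHS expressions appearing at full scale are $d_j^{-2}\trplnorm{F}{Q'_{h,j}}$ and the small-parameter kick-back $C_0\theta(\cdots)_{Q'_{h,j}}$; expressions of the form $\trplnorm{F_t}{Q'_{h,j}}$ or $d_j^{-1}\trplnorm{F}{1,Q'_{h,j}}$ must never leak through at order one. Tying the Young parameters consistently to the pair $(\theta, C_0)$, across the commutator, super-approximation, and boundary-skin contributions, is the delicate step.
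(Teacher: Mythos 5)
Your proposal follows the same general strategy as the paper (cutoff function, splitting $F$ into a discrete piece plus an interpolation error, two weighted energy tests, super-approximation corrections, kick-back), so the skeleton is right. However, there is a genuine gap in the handling of the boundary-skin residuals in the $H^1$-$L^2$ step, and it is precisely the novelty of the paper's argument that you have glossed over.

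Your first test function is $\chi_h := I_h(\omega^2 e_t)$. This is \emph{not} an exact time derivative of a finite-element function; the paper instead uses $\chi = \partial_t\bigl[I_h(\omega^2\zeta_h)\bigr] = I_h\bigl[(\omega^2\zeta_h)_t\bigr]$, which is. The difference matters crucially when you estimate the residual $R_1 = \bigl(\tGamma_t + A\tGamma,\, \chi_h\bigr)_{\OmghOmg}$. With the paper's choice, one integrates by parts in time to obtain
\begin{equation}
\cK_3 \;=\; \int_{I_d}\bigl(\tGamma_{tt}+A\tGamma_t,\,I_h(\omega^2\zeta_h)\bigr)_\OmghOmg\,dt \;-\; \lambda_d\bigl(\tGamma_t(T)+A\tGamma(T),\,I_h(\omega_1^2\zeta_h(T))\bigr)_\OmghOmg,
\end{equation}
which has two consequences: (i) the derivative falls on $\tGamma$ rather than on the test function, giving rise to the $\tGamma_{tt}+A\tGamma_t$ and $\partial_{n_h}\tGamma_t$ norms that you see in $G_j$ (these simply cannot appear if you apply Cauchy--Schwarz directly, as you propose); and (ii) what remains on the test-function side is $I_h(\omega^2\zeta_h)$, whose boundary-skin $L^2$ norm is controlled by \eqref{eq:2} in terms of $\|\zeta_h\|$ and $\|\nabla\zeta_h\|$, i.e.\ by $\|F\|_{1}$ and $\|\zeta\|_{1}$. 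If instead you estimate $R_1$ directly via Cauchy--Schwarz and \eqref{eq:2}, you meet $\|\nabla \zeta_{h,t}\|$, which contains $\|\nabla F_t\|$ and is not controlled by any of the local energy quantities you are trying to close on; the inverse inequality removes the spatial derivative only at the cost of the factor $h$, after which the Young split leaves a $\|F_t\|^2$ term at order one that destroys the kick-back.

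Two further small points you should make explicit: the super-approximation estimates needed for the time-differentiated test function are new ones (the paper derives \eqref{eq:sc3} and \eqref{eq:sc5}, not the standard \eqref{eq:sc1}, \eqref{eq:sc2}), and the boundary term at $t=T$ produced by the integration by parts has to be absorbed into $\lambda_d\|e(T)\|_{1,D_{2d}}$ using the trace-in-time inequality \eqref{eq:trace-time} — a step your sketch omits but which is needed to land the $\lambda_j\|F(T)\|_{1,D_{h,j}}$ term on the left-hand side with a kick-backable coefficient.
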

%
\begin{lem}\label{lem:duality}
There exists $C>0$ independent of $C_*$, $h$, and $j$ that satisfies
\begin{multline}
\trplnorm{F}{Q_{h,j}}
\le C h^2 d_j^{-\frac{N}{2}-1} + C \sum_i \left( h^2 \trplnorm{F_t}{Q_{h,i}} + h \trplnorm{F}{1,Q_{h,i}} \right) \min \left\{ \left( \frac{d_i}{d_j} \right)^{\frac{N}{2}+1}, \left( \frac{d_j}{d_i} \right)^{\frac{N}{2}+1} \right\} \\
+ C hd_j^{-\frac{N}{2} + \frac{1}{2}} 
+ C h \left( d_j^{-1} \trplnorm{F}{Q'_{h,j}} + \trplnorm{F}{1,Q'_{h,j}} \right)
+ C h d_j^{-\frac{N}{2}} \| F \|_{L^1(0,T; W^{1,1}(\Omega_h))}.
\label{eq:dual}
\end{multline}
\end{lem}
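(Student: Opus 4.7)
The plan is to run a duality argument in the spirit of \cite{SchTW98}, adapted to the non-matching setting via the tubular-neighborhood tools of Subsection~\ref{subsec:tubular}. By Riesz duality, $\trplnorm{F}{Q_{h,j}} = \sup_\psi (F,\psi)_{Q_{h,j}}$ where the supremum is taken over $\psi\in L^2(Q_{h,j})$ with $\|\psi\|_{L^2(Q_{h,j})}=1$; extend $\psi$ by zero to $Q_{h,T}$ and take any Sobolev extension $\tilde\psi$ to $\Omega\times(0,T)$. Solving the backward Neumann dual problem
\begin{equation*}
-\phi_t + A\phi = \tilde\psi \ \text{ in } Q_T, \qquad \partial_n\phi = 0 \ \text{ on } \Sigma_T, \qquad \phi(T) = 0 \ \text{ in } \Omega,
\end{equation*}
and performing integration by parts in time together with the modified Galerkin orthogonality for $F=\Gamma_h-\tGamma$ (cf.~\cref{lem:ago}), tested against $v_h = I_h\tilde\phi \in V_h$, yields the decomposition
\begin{equation*}
(F,\psi)_{Q_{h,j}} = -(F(0),\tilde\phi(0))_{\Omega_h} + \int_0^T \bigl[(F_t,\tilde\phi - I_h\tilde\phi)_{\Omega_h} + a_{\Omega_h}(F,\tilde\phi - I_h\tilde\phi)\bigr]\,dt + R_{\mathrm{skin}},
\end{equation*}
where $R_{\mathrm{skin}}$ collects the remainder integrals over $\OmgOmgh\cup\OmghOmg$ and $\dOmegah\times(0,T)$ induced by the non-conformity of $\Omegah$ to $\Omega$.

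The second step is weighted parabolic regularity for the dual solution $\phi$. Since $\tilde\psi$ is concentrated on the shell $Q_{h,j}$, weighted energy estimates on the parabolic dyadic shells, combined with the analyticity of the Neumann heat semigroup on the smooth domain $\Omega$, yield
\begin{equation*}
\trplnorm{\phi_t}{Q_{h,i}} + \trplnorm{\phi}{2,Q_{h,i}} \le C\min\!\left\{\Bigl(\frac{d_i}{d_j}\Bigr)^{\frac{N}{2}+1},\Bigl(\frac{d_j}{d_i}\Bigr)^{\frac{N}{2}+1}\right\}
\end{equation*}
uniformly in $i$, and $\|\tilde\phi(0)\|_{L^2(\Omega_h)} \le Cd_j^{-N/2-1}$, under the normalization $\|\psi\|_{L^2(Q_{h,j})}=1$. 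Coupling these with the interpolation bounds $\|\tilde\phi - I_h\tilde\phi\|_{H^m(K)} \le Ch^{2-m}\|\tilde\phi\|_{H^2(K)}$ ($m=0,1$), applying Cauchy--Schwarz shell-by-shell, and summing produces the main coupling term $\sum_i (h^2\trplnorm{F_t}{Q_{h,i}} + h\trplnorm{F}{1,Q_{h,i}})\min\{\cdot,\cdot\}$. The initial-data contribution $(F(0),\tilde\phi(0))_{\Omega_h}$, after using $\|F(0)\|_{L^2(\Omega_h)} = O(h)$ from super-approximation of $\bdelta$ against $V_h$ and gaining a further factor of $h$ by writing $F(0)$ as an orthogonal $L^2$-projection error tested against a polynomial best-approximant of $\tilde\phi(0)$, yields $Ch^2 d_j^{-N/2-1}$.

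The third step is to handle $R_{\mathrm{skin}}$ using the tubular neighborhood $T(\varepsilon)$ with $\varepsilon = O(h^2)$. Bulk integrals over $\OmgOmgh\cup\OmghOmg\subset T(\varepsilon)$ are converted into interior integrals on $T(\varepsilon)\cap Q'_{h,j}$ by a standard tubular trace/lifting estimate, producing the local remainder $Ch(d_j^{-1}\trplnorm{F}{Q'_{h,j}} + \trplnorm{F}{1,Q'_{h,j}})$; boundary integrals on $\dOmegah\times(0,T)$ handled by a trace inequality yield $Chd_j^{-N/2+1/2}$ after pairing the surface measure $|\dOmega\cap Q'_{h,j}|^{1/2}\sim d_j^{(N-1)/2}$ with the weighted $\phi$-bound and one further $h$ from the skin width; and the genuinely nonlocal residual, where $F$ over $\OmgOmgh\cup\OmghOmg$ is paired with an $I_h\tilde\phi$ that is not supported near $Q_{h,j}$, produces $Chd_j^{-N/2}\|F\|_{L^1(0,T;W^{1,1}(\Omegah))}$ after invoking the pointwise bound $\|\tilde\phi\|_{W^{1,\infty}(\Omega)}\le Cd_j^{-N/2}$ from Sobolev embedding applied to the weighted regularity above.

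The main obstacle will be securing the sharp $\min\{(d_i/d_j)^{N/2+1},(d_j/d_i)^{N/2+1}\}$ weighted regularity for the backward dual problem uniformly in the position of the shell $Q_{h,j}$, particularly when $Q_{h,j}$ intersects $\dOmega$: there the curved Neumann boundary forces one to couple the normal-direction cutoff functions used in the weighted energy arguments with the vanishing normal trace of $\phi$ and the smoothness of $\dOmega$, so that the $H^2$ gain survives near the boundary. A secondary difficulty is verifying that $R_{\mathrm{skin}}$ genuinely decomposes into the stated local ($Q'_{h,j}$) and global ($\|F\|_{L^1(0,T;W^{1,1}(\Omegah))}$) pieces with the indicated $h$ and $d_j$ weights, rather than leaving an unstructured $O(1)$ contribution that could not be absorbed into the kick-back argument of \cref{lem:green}.
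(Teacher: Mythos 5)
Your overall blueprint---dualize against the backward Neumann heat problem with data concentrated on the shell $Q_{h,j}$, expand the pairing via the asymptotic Galerkin orthogonality, and split the resulting remainder into interpolation-error, initial-data, and boundary-skin pieces---is the same route the paper takes. However, several of the intermediate bounds you invoke are not correct as stated, and they are precisely where the technical work lies. First, you interpolate the dual solution with the Lagrange interpolant $I_h\tilde\phi$, but for $\tilde\psi\in L^2$ the dual solution is only in $H^1(0,T;L^2)\cap L^2(0,T;H^2)$, and in general $N$ an $H^2$ function has no continuous representative; the paper deliberately uses the quasi-interpolant $\tilde I_h$ (see \cref{lem:approx}(iii)), which is stable on $W^{1,1}$ and makes the shell-by-shell Cauchy--Schwarz step legitimate without pointwise values. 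Second, your treatment of the initial-data term is quantitatively wrong: $\|F(0)\|_{L^2(\Omega_h)}=\|P_h\bdelta-\bdelta\|_{L^2}$ is $O(h^{-N/2})$, not $O(h)$, and the energy estimate gives $\|\tilde\phi(0)\|_{L^2(\Omega_h)}=O(1)$, not $O(d_j^{-N/2-1})$; a naive Cauchy--Schwarz therefore cannot produce $Ch^2 d_j^{-N/2-1}$. The correct mechanism (cf.\ \cite[Lemma~4.2]{SchTW98}) is an $L^1$--$L^\infty$ pairing: subtract an interpolant to exploit $L^2$-orthogonality of $P_h\bdelta-\bdelta$ to $V_h$, then combine $\|\tw(0)-I_h\tw(0)\|_{L^\infty}\lesssim h^2\|\tw(0)\|_{W^{2,\infty}}$ with the exponential decay \eqref{eq:deltah} of $P_h\bdelta$ and the Gaussian bound giving $\|\tw(0)\|_{W^{2,\infty}}\lesssim d_j^{-N/2-1}$ away from the data shell. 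Third, the pointwise bound $\|\tilde\phi\|_{W^{1,\infty}(\Omega)}\lesssim d_j^{-N/2}$ is false globally---$\phi$ is singular on the shell where the dual data lives---and holds only on $Q_T\setminus Q'_{h,j}$; the paper's proof therefore splits every skin-term integral into a near part ($Q''_{h,j}$), handled by $L^2$ energy bounds and the boundary-skin $L^2$ estimates of \cref{lem:gap,lem:tube}, and a far part, where the Gaussian pointwise bound \eqref{eq:w-Qj} is available. You will not be able to close the $R_{\mathrm{skin}}$ estimate with the structure you indicate unless you make this near/far split explicit. Finally, note that the paper derives the weighted regularity $\trplnorm{\tw}{2,Q_{h,i}}\lesssim\min\{(d_i/d_j)^{N/2+1},(d_j/d_i)^{N/2+1}\}$ directly from the Gaussian kernel bounds \eqref{eq:gaussian} together with the local extension stability \eqref{eq:ext-local}, rather than by weighted energy arguments; this is simpler and avoids the boundary-coupling issue you flag as a concern.
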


Now, we complete the sketch of the proof of \cref{lem:green}.
Multiplying \eqref{eq:local-F} by $d_j^{\frac{N}{2}+2}$, summing up with respect to $j$, applying \eqref{eq:summation-Qhj-prime},
and making $\theta$ small enough, we can show that (see~\eqref{eq:green-5})
\begin{equation}
\sum_j \left( d_j^{\frac{N}{2}+2} \trplnorm{F_t}{Q_{h,j}} + d_j^{\frac{N}{2}+1} \trplnorm{F}{1,Q_{h,j}} \right) 
\le Ch|\log h|^{\underline{k}} 
+ C \sum_j d_j^{\frac{N}{2}} \trplnorm{F}{Q_{h,j}}.
\label{eq:outline-1}
\end{equation}
Moreover, \eqref{eq:dual} implies (see~\eqref{eq:dual-F_t})
\begin{multline}
\sum_j d_j^{\frac{N}{2}} \trplnorm{F}{Q_{h,j}} 
\le C h
+ C C_*^{-1} \sum_j \left( 
d_j^{\frac{N}{2}+2} \trplnorm{F_t}{Q_{h,j}} 
+ d_j^{\frac{N}{2}+1} \trplnorm{F}{1,Q_{h,j}} 
\right) \\
+ C h |\log h| \| F \|_{L^1(0,T; W^{1,1}(\Omega_h))}.
\label{eq:outline-2}
\end{multline}
Therefore, substituting \eqref{eq:outline-2} into \eqref{eq:outline-1} and
making $C_*$ large enough, we can obtain
\begin{equation}
\sum_j d_j^{\frac{N}{2}+1} \trplnorm{F}{1,Q_{h,j}}
\le  Ch|\log h|^{\underline{k}} 
+ C h |\log h| \| F \|_{L^1(0,T; W^{1,1}(\Omega_h))}.
\end{equation}
Finally, going back to \eqref{eq:outline-3} and letting $h$ small enough, 
we can establish
\begin{equation}
\| F \|_{L^1(0,T; W^{1,1}(\Omega_h))}
\le  Ch|\log h|^{\underline{k}} .
\end{equation}
Similarly, we can show
\begin{equation}
\| F_t \|_{L^1(Q_{h,T})} \le  C
\end{equation}
and thus we can complete the proof of \eqref{eq:L1-F}.
Returning to \cref{lem:pointwise}, we obtain the desired estimate \eqref{eq:best}.
The rest of the present paper is devoted to the proofs of the above estimates.

\section{Preliminaries}
\label{sec:preliminaries}

\subsection{Projection and interpolation}

We introduce projection and interpolation operators associated with $V_h$.
As mentioned above, we denote the $L^2(\Omegah)$-projection by $P_h$.
The node-wise interpolation operator is denoted by~$I_h$.
Furthermore, we construct a ``quasi-interpolation'' operator $\tilde{I}_h$ acting on the Sobolev space $W^{1,1}(\Omegah)$, whereas $I_h$ acts on the space of continuous functions.
For construction, see \cite[Section~5]{Gei06} (especially, definition of $\tilde{I}^N_h$).
For these operators, the following stability and error estimates hold.
The proofs can be found in \cite{BreS08} and \cite{Gei06} (see also \cite[Lemma~2.1]{ThoW00}).

\begin{lem}\label{lem:approx}
Assume that $\cT_h$ is shape-regular and quasi-uniform.
\begin{enumerate}[label=\upshape (\roman{*})]
\item For each $p \in [1,\infty]$, we have
\begin{alignat}{2}
\| P_h v \|_{L^p(\Omegah)}
&\le C \|v\|_{L^p(\Omegah)},
& \quad \forall v
& \in L^p(\Omegah), \\
\| P_h v \|_{W^{1,p}(\Omegah)}
&\le C \|v\|_{W^{1,p}(\Omegah)},
& \quad \forall v
&\in W^{1,p}(\Omegah), \\
\| v - P_h v \|_{L^p(\Omegah)}
&\le Ch^2 \|v\|_{W^{2,p}(\Omegah)},
& \quad \forall v
&\in W^{2,p}(\Omegah), \\
\| v - P_h v \|_{W^{1,p}(\Omegah)}
&\le Ch \|v\|_{W^{2,p}(\Omegah)},
& \quad \forall v
&\in W^{2,p}(\Omegah).
\end{alignat}
\item Let $0 \le l \le k$ be integers.
Then, for each $K \in \cT_h$, we have
\begin{equation}
\| \nabla^l (v - I_h v) \|_{L^\infty(K)} \le C h^{k-l} \|\nabla^k v\|_{L^\infty(K)},
\quad \forall v \in C^k(\overline{K}),
\end{equation}
where $C$ is independent of $h$, $K$, and $v$.
\item Let $K \in \cT_h$ and $M_K := \bigcup \{ \overline{T} \in \cT_h \mid \overline{T} \cap \overline{K} \ne \emptyset \}$.
Then, for each $p \in [1,\infty]$, we have
\begin{alignat}{2}
\| v-\tilde{I}_h v\|_{L^p(K)}
&\le C h \| \nabla v \|_{L^p(M_K)},
& \quad \forall v
& \in W^{1,p}(M_K), \\
\| v-\tilde{I}_h v\|_{L^p(K)}
&\le C h^2 \| \nabla^2 v \|_{L^p(M_K)},
& \quad \forall v
& \in W^{2,p}(M_K), \\
\| \nabla (v-\tilde{I}_h v)\|_{L^p(K)}
&\le C h \| \nabla^2 v \|_{L^p(M_K)},
& \quad \forall v
& \in W^{2,p}(M_K),
\end{alignat}
where each $C$ is independent of $h$, $K$, and $v$.
\end{enumerate}
\end{lem}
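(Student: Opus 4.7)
The plan is to establish the three parts essentially as consequences of classical finite element approximation theory adapted to the shape-regular, quasi-uniform triangulation $\cT_h$ of $\Omegah$, with no new ingredients introduced by the polygonal approximation of $\Omega$.

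For part (i), the $L^p$-stability of $P_h$ on quasi-uniform meshes is the classical result of Crouzeix--Thom\'ee: it rests on localization together with the inverse inequality that locally equivalences $L^p$- and weighted $L^2$-norms on each element. The $W^{1,p}$-stability is more delicate and essentially uses quasi-uniformity; one route is the weighted-norm technique of Bramble--Pasciak--Steinbach, an equivalent route is to compare $P_h$ with a locally $W^{1,p}$-stable Scott--Zhang type projection. Once the stability bounds are in hand, the four approximation estimates follow by the standard trick of inserting a locally stable quasi-interpolant $\Pi_h$ and writing $v-P_h v=(I-P_h)(v-\Pi_h v)$, so that stability reduces matters to the classical local bound $\|v-\Pi_h v\|_{W^{m,p}(\Omegah)}\le C h^{2-m}\|v\|_{W^{2,p}(\Omegah)}$ for $m=0,1$.

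For part (ii), this is the Bramble--Hilbert lemma applied element-wise. On each simplex $K\in\cT_h$ I would pass to a reference simplex $\hat K$, use that $I_h$ preserves polynomials of degree $\le k$ and that $C^k(\overline{\hat K})$ is continuously embedded into the domain of the nodal interpolant, invoke the Bramble--Hilbert lemma to bound $\|\hat v-I_{\hat K}\hat v\|_{W^{l,\infty}(\hat K)}$ by $C\|\hat v\|_{W^{k,\infty}(\hat K)}$ modulo polynomials, and then scale back via shape regularity to produce the factor $h^{k-l}$. For part (iii), since $\tilde I_h$ must act on $W^{1,1}(\Omegah)$ (for which nodal evaluation is unavailable), I would adopt the averaged Scott--Zhang construction $\tilde I_h^N$ from \cite{Gei06}; the resulting operator is a projection onto $V_h$ that is locally $L^p$-stable on each patch $M_K$ and preserves polynomials of degree $\le k$. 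The three claimed bounds then follow by writing $v-\tilde I_h v=(I-\tilde I_h)(v-p)$ for a suitable $p\in\mathcal{P}^0$ or $\mathcal{P}^1$ on $M_K$, combining local stability with the Poincar\'e inequality on patches of diameter $\sim h$.

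The only step I expect to require genuine care is the $W^{1,p}$-stability of $P_h$ for $p\ne 2$: this is the non-trivial ingredient, and it is precisely where shape regularity and quasi-uniformity are used essentially, rather than merely for convenience. All remaining assertions are routine and the full details can be read off from the indicated references \cite{BreS08,Gei06,ThoW00}.
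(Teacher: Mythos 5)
Your proposal is correct and is essentially the same as the paper's treatment: the paper gives no proof of its own but simply refers to the classical finite element references (Brenner--Scott, Geissert, Thom\'ee--Wahlbin), and your sketch accurately reconstructs the standard arguments one finds there — $L^p$- and $W^{1,p}$-stability of $P_h$ plus insertion of a quasi-interpolant for (i), Bramble--Hilbert on the reference element for (ii), and the Scott--Zhang/averaged-interpolant construction with local polynomial preservation and Poincar\'e on patches for (iii). No genuinely different route is taken, so there is nothing further to compare.
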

%

\subsection{Tubular neighborhood}
\label{subsec:tubular}

In order to address the integrals over the boundary-skin $\Omega \bigtriangleup \Omega_h$,
we introduce the tubular neighborhood of $\partial\Omega$.
If $h$ is sufficiently small, we can construct a homeomorphism $\pi \colon \partial\Omega_h \to \partial\Omega$
based on the signed distance function with respect to $\partial\Omega$.
Then, the inverse map $\pi^* \colon \partial\Omega \to \partial\Omega_h$ is of the form
$\pi^*(x) = x + t^*(x)n(x)$ ($x \in \partial\Omega$),
where $n(x)$ is the outward unit normal vector of $\partial\Omega$ at $x$ and $t^* \in C^0(\partial\Omega; \bR)$.
We refer the reader to \cite[Section~14.6]{GilT01} for construction and properties of $\pi$.
It is known that $\| t^* \|_{L^\infty(\partial\Omega)} \le c_0 h^2$ for some $c_0 > 0$ depending only on $\Omega$.
In what follows, we set $\varepsilon := c_0 h^2$ for such $c_0$.
Then, from this observation, we have $\Omega \bigtriangleup \Omega_h \subset T(\varepsilon)$,
where $T(\varepsilon)$ is the tubular neighborhood of $\partial\Omega$ defined by \eqref{eq:tubular}.

Here, we collect some estimates related to $T(\varepsilon)$.
For the proofs of the following inequalities, we refer to \cite[Appendix]{KasOZ16} and \cite[Appendix~A]{KasK17}.
\begin{lem}
\label{lem:tube}
\begin{enumerate}[label=\upshape (\roman{*})]
\item For $f \in L^1(T(\varepsilon))$, we have
\begin{equation}
\left| \int_{\partial\Omega} f ds - \int_{\partial\Omega_h} f \circ \pi ds \right|
 \le C \varepsilon \| f \|_{L^1(\partial\Omega)}.
\label{eq:tube-1}
\end{equation}
\item For $f \in W^{1,p}(T(\varepsilon))$ and $p \in [1,\infty]$, we have
\begin{align}
\| f - f \circ \pi \|_{L^p(\partial\Omega_h)}
&\le C \varepsilon^{1-\frac{1}{p}} \| \nabla f \|_{L^p(T(\varepsilon))},
\label{eq:tube-2}\\
\| f \|_{L^p(T(\varepsilon))}
&\le C \varepsilon^{1/p} \| f \|_{L^p(\partial\Omega)} + C \varepsilon \| \nabla f \|_{L^p(T(\varepsilon))},
\label{eq:tube-3}\\
\| f \|_{L^p(\OmghOmg)}
&\le C \varepsilon^{1/p} \| f \|_{L^p(\partial\Omega_h)} + C \varepsilon \| \nabla f \|_{L^p(\OmghOmg)},
\label{eq:tube-4}
\end{align}
and the local estimate
\begin{align}
\| f - f \circ \pi \|_{L^p(\partial\Omega_h \cap D)}
& \le C \varepsilon^{1-\frac{1}{p}} \| \nabla f \|_{L^p(T(\varepsilon) \cap D_\varepsilon)},
\label{eq:tube-2-loc} \\
\| f \|_{L^p((\OmghOmg) \cap D)}
&\le C \varepsilon^{1/p} \| f \|_{L^p(\partial\Omega_h \cap D_\varepsilon)} + C \varepsilon \| \nabla f \|_{L^p((\OmghOmg) \cap D_\varepsilon)},
\label{eq:tube-4-loc}
\end{align}
for $D \subset \bR^N$ and $D_d := \{ x \in \bR^N \mid \dist(x,D) < d \}$ for $d>0$.
\item Letting $n_h$ be the outward unit normal vector of $\partial\Omega_h$, we have
\begin{equation}
\| n_h - n \circ \pi \|_{L^\infty(\partial\Omega_h)} \le C h.
\label{eq:tube-5}
\end{equation}
\end{enumerate}
Here, each $C$ is independent of $h$ and $f$.
\end{lem}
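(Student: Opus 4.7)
The plan is to parametrize a neighborhood of $\dOmega$ by the signed-distance diffeomorphism $\Phi(y,s) := y + s\,n(y)$, which identifies $\dOmega \times (-\varepsilon,\varepsilon)$ with $T(\varepsilon)$ once $h$ is small enough. In these coordinates the Euclidean volume element becomes $ds_y\,ds$ up to a factor $1 + O(s)$ governed by the principal curvatures of $\dOmega$, so every integral over $T(\varepsilon)$, $\OmghOmg$, or $\dOmegah$ will decouple into a tangential integral along $\dOmega$ and a one-dimensional normal integral of length at most $\varepsilon$. The key analytic input, to be established first, is that the representation $\pi^*(x) = x + t^*(x) n(x)$ of $\dOmegah$ as a graph over $\dOmega$ satisfies $\|t^*\|_{L^\infty(\dOmega)} \le c_0 h^2 = \varepsilon$ and that its surface Jacobian $J_{\pi^*}$ obeys $|J_{\pi^*} - 1| \le C\varepsilon$.

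For \eqref{eq:tube-1} I would apply the change of variables induced by $\pi^*$ to rewrite $\int_{\dOmegah} f \circ \pi\, ds$ as $\int_{\dOmega} f\, J_{\pi^*}\, ds$ and then invoke the above bound on $|J_{\pi^*}-1|$. For \eqref{eq:tube-2}, the segment from $y \in \dOmegah$ to $\pi(y)$ has length $|t^*(\pi(y))| \le \varepsilon$, so H\"older's inequality bounds $|f(y) - f(\pi(y))|^p$ by $\varepsilon^{p-1}$ times the one-dimensional integral of $|\nabla f|^p$ along this segment; integrating over $\dOmegah$ and changing variables back to tubular coordinates then yields the claim. Estimates \eqref{eq:tube-3} and \eqref{eq:tube-4} will follow the same pattern: write $f$ at an interior point as the trace on $\dOmega$ (respectively $\dOmegah$) plus the normal integral of $\nabla f$, take the $L^p$-norm in the tangential variable, and apply Minkowski. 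The local versions \eqref{eq:tube-2-loc} and \eqref{eq:tube-4-loc} are immediate from the same arguments because the normal fibers involved have length at most $\varepsilon$ and therefore stay inside $D_\varepsilon$ whenever their tangential base point lies in $D$.

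For \eqref{eq:tube-5} I would fix a boundary face $F \subset \dOmegah$ with vertices $v_0,\dots,v_{N-1} \in \dOmega$, parametrize $\dOmega$ near $v_0$ as the graph of a $C^2$ height function over the tangent plane, and Taylor-expand. Each chord $v_i - v_0$, of length at most $h$, then makes an angle of order $h$ with that tangent plane, so the unit face normal $\nh|_F$, determined by the wedge of these chords, satisfies $|\nh - n(v_0)| \le Ch$. Combined with the Lipschitz continuity of $n$ on $\dOmega$ and the bound $|v_0 - \pi(y)| \le Ch$ for $y \in F$, this gives $|\nh(y) - n(\pi(y))| \le Ch$ uniformly on $\dOmegah$.

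The main technical obstacle will be the quantitative Jacobian bound $|J_{\pi^*} - 1| \le C\varepsilon$, because it requires control not only of $t^*$ but also of its tangential gradient on $\dOmega$. Shape regularity of $\cT_h$ together with the $C^2$-smoothness of $\dOmega$ is exactly what supplies this control: a chord of length at most $h$ between two points of $\dOmega$ deviates from the surface by $O(h^2)$, and its tangential derivative with respect to the base point is $O(h)$. Once this is in hand, all remaining claims reduce to Fubini in tubular coordinates and the one-dimensional fundamental theorem of calculus.
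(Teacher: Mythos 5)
The paper does not give its own proof of this lemma; it defers entirely to \cite[Appendix]{KasOZ16} and \cite[Appendix~A]{KasK17}. Your proposal reproduces the standard argument used there: parametrize $T(\varepsilon)$ by the signed-distance diffeomorphism $(y,s)\mapsto y+sn(y)$, reduce every surface or skin integral to a tangential integral over $\partial\Omega$ times a normal fiber of length $O(\varepsilon)$, apply the fundamental theorem of calculus along each fiber, and control the surface Jacobian of $\pi^*$. Your key technical observations are the right ones: since $t^*$ vanishes at the vertices of each boundary face and $|t^*|\le c_0h^2$ over a patch of diameter $O(h)$, the tangential gradient satisfies $|\nabla_{\partial\Omega}t^*|=O(h)$ piecewise, so $|J_{\pi^*}-1|\lesssim |t^*|+|\nabla t^*|^2=O(h^2)=O(\varepsilon)$, which is exactly what \eqref{eq:tube-1} needs; and the factor $\varepsilon^{1-1/p}$ in \eqref{eq:tube-2} indeed comes from H\"older along a fiber of length $\le\varepsilon$ followed by Fubini. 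For \eqref{eq:tube-4} you should make explicit that for $x\in\Omegah\setminus\Omega$, writing $x=y+sn(y)$ with $y\in\partial\Omega$ forces $0<s<t^*(y)$, so the normal segment from $x$ to $\pi^*(y)\in\partial\Omegah$ stays inside $\Omegah\setminus\Omega$ — this is implicit in your sketch but worth stating. One cosmetic point: the hypothesis $f\in L^1(T(\varepsilon))$ in \eqref{eq:tube-1} does not by itself give a trace on $\partial\Omega$; the estimate as written presupposes that $\|f\|_{L^1(\partial\Omega)}$ is meaningful, which your proof (via the change of variables and the Jacobian bound) correctly treats as the operative regularity. Overall the proposal is correct and follows the same route as the cited references.
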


Let $\tilde{\Omega} := \Omega \cup T(\varepsilon) = \Omegah \cup T(\varepsilon)$.
As mentioned above, $\tilde{w}$ denotes an extension of a given function $w$ defined over $\Omega$ in the sense of Sobolev spaces.
Such extension can be constructed by reflection and is well-defined as a function over $\tilde{\Omega}$.
We can check the following global and local stability of the extension operators (see \cite{AdaF03}):
\begin{align}
\| \tilde{w} \|_{W^{s,p}(\tilde{\Omega})} &\le C \|w\|_{W^{s,p}(\Omega)},
\label{eq:ext-global}\\
\| \tilde{w} \|_{W^{s,p}(T(\varepsilon))} &\le C \|w\|_{W^{s,p}(\Omega \cap T(\varepsilon))},
\label{eq:ext-tube}\\
\| \tilde{w} \|_{W^{s,p}(D \cap T(\varepsilon))} &\le C \|w\|_{W^{s,p}(\Omega \cap D_{2\varepsilon} \cap T(\varepsilon))},
\quad D \subset \bR^N,
\label{eq:ext-local}
\end{align}
for $w \in W^{s,p}(\tilde{\Omega})$, where $C$ depends only on $s$, $p$, and $\Omega$.

\subsection{Regularized delta and Green's functions}

We present preliminary estimates for $\bdelta$, $\Gamma$, and $\Gamma_h$ introduced in Section~\ref{sec:outline} (see~\eqref{eq:delta}, \eqref{eq:gamma}, and \eqref{eq:disc-gamma}).
The regularized delta function $\bdelta$ satisfies $\supp \bdelta \subset \Omega \cap \Omega_h$ (i.e., $\supp \bdelta \cap T(\varepsilon) = \emptyset$) and
\begin{equation}
\| \bdelta \|_{W^{s,p}(K_0)} \le C_{s,p} h^{-s-\left( 1 - \frac{1}{p} \right)N},
\quad \forall s \ge 0, \quad \forall p \in [1,\infty],
\label{eq:norm-delta}
\end{equation}
where $C_{s,p}$ is independent of $h$ and $x_0$ by construction (see~\cite[Appendix]{SchSW96}).
Further, we have
\begin{equation}
| (P_h \bdelta)(x) | \le C h^{-N} e^{-c|x_0-x|/h}, \quad \forall x \in \Omega_h,
\label{eq:deltah}
\end{equation}
where $C$ and $c$ are independent of $h$, $x_0$, and $x$.
The proofs can be found in \cite[Lemma~7.2]{Wah91}.

%

We recall the pointwise esitmates for the usual Green's function (fundamental solution).
Let $G = G(x,y;t)$ be the solution of
\begin{equation}
\begin{cases}
\partial_t G + A G = 0, & \text{in } Q_T, \\
\partial_n G = 0, & \text{on } \partial\Omega \times (0,T), \\
G(0) = \delta_y, & \text{in } \Omega.
\end{cases}
\label{eq:green}
\end{equation}
where $y \in \Omega$ and $\delta_y$ is the Dirac $\delta$-function with respect to $y$.
Then, the following pointwise estimates are known.
\begin{equation}
|\partial_t^k \partial_x^\alpha G(x,y;t)|
\le C \left( \sqrt{t} + |x-y| \right)^{-N-2k-|\alpha|} e^{-c|x-y|^2/t},
\quad \forall x,y \in \Omega, \quad \forall t > 0,
\label{eq:gaussian}
\end{equation}
for any non-negative integer $k$ and multi-index $\alpha$,
where $C$ and $c$ are independent of $x$, $y$, and $t$.
See \cite{EidI70} for the proof.

Since the regularized Green's function $\Gamma$ solves \eqref{eq:gamma}, it can be written as
\begin{equation}
\Gamma(x,t) = \int_\Omega G(x,y;t) \bdelta(y) dy
\end{equation}
for $x \in \Omega$.
This representation gives the following estimates, which is used repeatedly.
Recall that $Q_{h,j}$ is the parabolic dyadic decomposition defined by \eqref{eq:dyadic2}.
\begin{lem}\label{lem:gap}
Let $T \le 1$, $p \in [1,\infty]$, $l \in \mathbb{N}_0$, and $\alpha \in \mathbb{N}_0^N$.
Then, we have
\begin{align}
\| \partial_t^l \partial_x^\alpha \tilde{\Gamma} \|_{L^p(L_T(\varepsilon) \cap Q_{h,j})}
&\le C h^{\frac{2}{p}} d_j^{\frac{1}{p} - \left( 1-\frac{1}{p} \right)N - |\alpha| - 2l},
\label{eq:gap-1}
\\
\| \partial_t^l \partial_x^\alpha \tilde{\Gamma} \|_{L^p(\Sigma_{h,T} \cap Q_{h,j})}
&\le C d_j^{\frac{1}{p} - \left( 1-\frac{1}{p} \right)N - |\alpha| - 2l}.
\label{eq:gap-2}
\end{align}
Moreover, the same estimates hold on $Q_{h,*}$ with $d_j$ replaced by $d_{J_*}$.
\end{lem}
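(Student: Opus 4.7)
The plan is to reduce everything to the Gaussian pointwise bound \eqref{eq:gaussian} on the fundamental solution $G$ and then multiply by the Lebesgue measure of the relevant integration domain. Using the representation $\Gamma(x,t) = \int_{K_0} G(x,y;t)\bdelta(y)\,dy$ together with $\|\bdelta\|_{L^1(K_0)} \le C$ (the $s=0$, $p=1$ case of \eqref{eq:norm-delta}) and the Gaussian bound, one obtains
\begin{equation*}
|\partial_t^l \partial_x^\alpha \Gamma(x,t)| \le C \sup_{y \in K_0}(\sqrt{t}+|x-y|)^{-N-2l-|\alpha|} e^{-c|x-y|^2/t}.
\end{equation*}
The key geometric observation is that on $Q_{h,j}$ one has $\rho(x,t) = \max\{|x-x_0|,\sqrt{t}\} \ge d_j$, while $y \in K_0$ satisfies $|y-x_0| \le h \le C_*^{-1} d_j$. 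Consequently $|x-y| \ge |x-x_0| - h \ge (1 - C_*^{-1})|x-x_0|$, which combined with the bound on $\sqrt{t}$ yields $\sqrt{t} + |x-y| \ge c\,d_j$ with $c$ independent of $h$, $j$, and $x_0$. Dropping the exponential factor (bounded by $1$) then gives the uniform bound
\begin{equation*}
\|\partial_t^l \partial_x^\alpha \Gamma\|_{L^\infty(\Omega \cap Q_{h,j})} \le C\,d_j^{-N-2l-|\alpha|}.
\end{equation*}

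Next I would transfer this bound to $\tilde\Gamma$ on $T(\varepsilon) \cap Q_{h,j}$ via the local stability of the extension operator \eqref{eq:ext-local}, which commutes with $\partial_t^l$. Since $2\varepsilon = O(h^2) \ll d_j$, enlarging the spatial annulus by $2\varepsilon$ does not change the dyadic scale, and the $L^\infty$ bound is preserved. For the measure side, the smoothness of $\partial\Omega$ yields $\mathcal{H}^{N-1}(\partial\Omega \cap B_{2d_j}(x_0)) \le C d_j^{N-1}$, hence $|T(\varepsilon) \cap B_{2d_j}(x_0)| \le C\varepsilon\,d_j^{N-1} = C h^2 d_j^{N-1}$; multiplying by the time-slab length of at most $4d_j^2$ gives $|L_T(\varepsilon) \cap Q_{h,j}| \le C h^2 d_j^{N+1}$. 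The inequality $\|f\|_{L^p} \le \|f\|_{L^\infty} |\cdot|^{1/p}$ then gives
\begin{equation*}
\|\partial_t^l \partial_x^\alpha \tilde\Gamma\|_{L^p(L_T(\varepsilon) \cap Q_{h,j})} \le C\,d_j^{-N-2l-|\alpha|} (h^2 d_j^{N+1})^{1/p},
\end{equation*}
and rearranging exponents ($-N+(N+1)/p = 1/p - (1-1/p)N$) produces exactly \eqref{eq:gap-1}. The boundary estimate \eqref{eq:gap-2} is identical after replacing the volume bound by $|\Sigma_{h,T} \cap Q_{h,j}| \le C d_j^{N+1}$ (no $h^2$ factor, since $\partial\Omega_h$ is itself $(N-1)$-dimensional and smooth pieces have controlled area in a ball of radius $d_j$). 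The inner region $Q_{h,*}$ is handled the same way with $d_{J_*}$ in place of $d_j$.

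I do not expect any serious obstacle. The only points requiring care are: (a) choosing $C_*$ (the parameter in the definition of $J_*$) large enough, say $C_*\ge 2$, so that $|x-y| \ge \tfrac{1}{2}|x-x_0|$ on $Q_{h,j}$; (b) verifying the surface-area bound $\mathcal{H}^{N-1}(\partial\Omega \cap B_{2d_j}(x_0)) \le C d_j^{N-1}$ holds with a constant independent of $x_0 \in \Omegah$, which follows from a uniform bound on the principal curvatures of $\partial\Omega$; and (c) noting that the extension operator commutes with $\partial_t$, so time derivatives cause no trouble. All of these are standard, and the total argument reduces to a one-line pointwise estimate times a volume factor.
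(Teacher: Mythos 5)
Your proof is essentially correct for the outer annuli $Q_{h,j}$, $1\le j\le J_*$, and follows the same route as the paper: a pointwise Gaussian bound on $\partial_t^l\partial_x^\beta\Gamma$ via the representation $\Gamma(x,t)=\int G(x,y;t)\bdelta(y)\,dy$, transferred to $\tilde\Gamma$ by \eqref{eq:ext-local}, then multiplied by the measure of $L_T(\varepsilon)\cap Q_{h,j}$ (or of $\Sigma_{h,T}\cap Q_{h,j}$). Two small points are worth spelling out. First, since \eqref{eq:ext-local} is stated in terms of $W^{|\alpha|,\infty}$-norms and does not commute with individual spatial derivatives, you actually obtain a bound in terms of $\sum_{|\beta|\le|\alpha|}\|\partial_t^l\partial_x^\beta\Gamma\|_{L^\infty}$; this causes no harm because $d_j\le 1$ gives $d_j^{-N-2l-|\beta|}\le d_j^{-N-2l-|\alpha|}$, but it should be said (the paper keeps the sum explicitly). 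Second, and more importantly, the claim that ``the inner region $Q_{h,*}$ is handled the same way'' is not quite right: your key lower bound $\sqrt{t}+|x-y|\gtrsim d_j$ is derived from $\rho(x,t)\ge d_j$, which on $Q_{h,*}$ is replaced by $\rho(x,t)\le d_{J_*}$, so the argument does not carry over verbatim. On $L_T(\varepsilon)\cap Q_{h,*}$ the lower bound on $\sqrt{t}+|x-y|$ must instead come from the spatial separation $\dist(\supp\bdelta,T(\varepsilon))\gtrsim h\gtrsim C_*^{-1}d_{J_*}$, which holds because the Scott--Schatz--Wahlbin construction places $\supp\bdelta$ inside a ball of radius $\sim h$ strictly interior to $K_0$, while $T(\varepsilon)$ has width $O(h^2)$; this is precisely why the hypothesis $\supp\bdelta\cap T(\varepsilon)=\emptyset$ is invoked in the paper's proof. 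With this one repair, your argument is complete and matches the paper's.
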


\begin{proof}
We show the first inequality \eqref{eq:gap-1} for $Q_{h,j}$.
By the H\"older inequality and the local stability of the extension \eqref{eq:ext-local}, we have
\begin{equation}
\| \partial_t^l \partial_x^\alpha \tilde{\Gamma} \|_{L^p(L_T(\varepsilon) \cap Q_{h,j})}
\le C (\varepsilon d_j^{N+1})^{1/p}
\sum_{|\beta| \le |\alpha|}
\| \partial_t^l \partial_x^\beta \Gamma \|_{L^\infty(L_T(\varepsilon) \cap Q'_{h,j})}.
\end{equation}
Since $\partial_t^l \partial_x^\beta\Gamma$ is represented as
\begin{equation}
\partial_t^l \partial_x^\beta \Gamma(x,t) = \int_{\supp \bdelta} \partial_t^l \partial_x^\beta G(x,y;t) \bdelta(y) dy,
\end{equation}
we obtain
\begin{equation}
\| \partial_t^l \partial_x^\beta \Gamma \|_{L^\infty(L_T(\varepsilon) \cap Q'_{h,j})}
\le C d_j^{-N-2l-|\beta|}
\le C d_j^{-N-2l-|\alpha|}
\end{equation}
for $|\beta| \le |\alpha|$, from \eqref{eq:gaussian} and $\supp \bdelta \cap T(\varepsilon) = \emptyset$.
Noting that $\varepsilon \approx h^2$, we can derive \eqref{eq:gap-1}.
The proof of \eqref{eq:gap-2} is similar since
\begin{equation}
\| \partial_t^l \partial_x^\alpha \tilde{\Gamma} \|_{L^p(\Sigma_{h,T} \cap Q_{h,j})}
\le C d_j^{(N+1)/p}
\sum_{|\beta| \le |\alpha|}
\| \partial_t^l \partial_x^\beta \Gamma \|_{L^\infty(L_T(\varepsilon) \cap Q_{h,j})},
\end{equation}
holds.
Hence we can complete the proof.
\end{proof}

The first application of the above Lemma is several estimates for $\Gamma$.
\begin{lem}\label{lem:gamma}
Let $T \le 1$, $p \in [1,\infty]$, $l \in \mathbb{N}_0$, and $\alpha \in \mathbb{N}_0^N$.
\begin{enumerate}[label={(\roman*)},font=\upshape]
\item 
Assume that 
\begin{equation}
-\frac{1}{p}+\left( 1 - \frac{1}{p} \right)N+|\alpha|+2l > 0.
\label{eq:assum-gamma}
\end{equation}
Then, we have
\begin{align}
\| \partial_t^l \partial_x^\alpha \tGamma \|_{L^p(L_T(\varepsilon))}
&\le C h^{\frac{3}{p} - \left( 1-\frac{1}{p} \right)N - |\alpha| - 2l}, \\
\| \partial_t^l \partial_x^\alpha \tGamma \|_{L^p(\Sigma_{h,T})}
&\le C h^{\frac{1}{p} - \left( 1-\frac{1}{p} \right)N - |\alpha| - 2l}.
\end{align}
\item If 
\begin{equation}
-\frac{1}{p}+\left( 1 - \frac{1}{p} \right)N+|\alpha|+2l = 0,
\label{eq:assum-gamma-2}
\end{equation}
then we have
\begin{align}
\| \partial_t^l \partial_x^\alpha \tGamma \|_{L^p(L_T(\varepsilon))}
&\le C h^{2/p} |\log h|^{1/p}, \\
\| \partial_t^l \partial_x^\alpha \tGamma \|_{L^p(\Sigma_{h,T})}
&\le C |\log h|^{1/p}.
\end{align}
\item If 
\begin{equation}
-\frac{1}{p}+\left( 1 - \frac{1}{p} \right)N+|\alpha|+2l < 0,
\label{eq:assum-gamma-3}
\end{equation}
then we have
\begin{align}
\| \partial_t^l \partial_x^\alpha \tGamma \|_{L^p(L_T(\varepsilon))}
&\le C h^{2/p}, \\
\| \partial_t^l \partial_x^\alpha \tGamma \|_{L^p(\Sigma_{h,T})}
&\le C .
\end{align}
\end{enumerate}
\end{lem}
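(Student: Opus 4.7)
The plan is to sum the local estimates of \cref{lem:gap} over the parabolic dyadic decomposition. Write $L_T(\varepsilon) = (L_T(\varepsilon) \cap Q_{h,*}) \cup \bigcup_{j=0}^{J_*}(L_T(\varepsilon) \cap Q_{h,j})$, raise the first inequality of \cref{lem:gap} to the $p$-th power and add, obtaining, for $p \in [1,\infty)$,
\begin{equation*}
\| \partial_t^l \partial_x^\alpha \tGamma \|_{L^p(L_T(\varepsilon))}^p
\le C h^{2} \left( d_{J_*}^{sp} + \sum_{j=0}^{J_*} d_j^{sp} \right),
\qquad s := \tfrac{1}{p} - \bigl( 1 - \tfrac{1}{p} \bigr) N - |\alpha| - 2l,
\end{equation*}
and the analogous expression without the prefactor $h^{2}$ for $\Sigma_{h,T}$. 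Since $d_j = 2^{-j-1}$, this is a finite geometric sum with common ratio $2^{-sp}$, so its asymptotic behavior is controlled by the sign of $s$, which is exactly opposite to the sign of $-1/p + (1-1/p)N + |\alpha| + 2l$ appearing in the three hypotheses.

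If \eqref{eq:assum-gamma} holds, then $s < 0$, the terms grow geometrically with $j$, and the sum is dominated by its last term $d_{J_*}^{sp} \approx h^{sp}$. Combining with the $h^{2}$ prefactor and taking the $p$-th root yields $h^{s + 2/p} = h^{3/p - (1-1/p)N - |\alpha| - 2l}$, which is the claimed bound. Under \eqref{eq:assum-gamma-2} we have $sp = 0$, so each summand equals $1$ and the number of terms is $J_* + 1 \approx |\log h|$, producing the factor $|\log h|^{1/p}$. Under \eqref{eq:assum-gamma-3} we have $s > 0$, the sum is controlled by its first term $d_0^{sp} = O(1)$, and only the prefactor $h^{2/p}$ survives. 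The $\Sigma_{h,T}$ estimates follow from the very same case analysis applied to the second inequality of \cref{lem:gap}, with $h^{2/p}$ replaced by $1$ throughout.

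For the endpoint $p = \infty$, sums are replaced by maxima over $j$. In case~(i), the hypothesis reduces to $N + |\alpha| + 2l > 0$, the local bound $C d_j^{-(N+|\alpha|+2l)}$ is maximized at $j = J_*$, and this yields $C h^{-(N+|\alpha|+2l)}$, which agrees with the stated exponent when $p = \infty$. Cases~(ii) and~(iii) force $N + |\alpha| + 2l \le 0$ and are vacuous for $p = \infty$. I do not foresee any substantial obstacle; the only point worth care is that the geometric-series constants depend on $2^{|sp|}$ and remain finite because the integer-valued quantity $-1/p + (1-1/p)N + |\alpha| + 2l$ is bounded away from zero in the non-critical regimes.
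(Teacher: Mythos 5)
Your proof is correct and follows essentially the same route as the paper: both sum the local $Q_{h,j}$-estimates of \cref{lem:gap} over the parabolic dyadic decomposition and reduce the claim to a geometric series whose asymptotic is governed by the sign of the exponent $s$, with the three cases giving the last-term, linear, and first-term regimes respectively. One minor slip in your closing remark: the quantity $-\tfrac{1}{p}+(1-\tfrac{1}{p})N+|\alpha|+2l$ is not integer-valued in general (e.g.\ $p=N=2$, $\alpha=0$, $l=0$ gives $\tfrac12$), but since it is a fixed nonzero real for fixed parameters in cases (i) and (iii), the geometric-series constants remain finite and the conclusion is unaffected.
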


In the proof below, and thereafter, we write $\sum_{j,*}$ when the summation includes the integration over $Q_{h,*}$.
If it is not included, we denote the summation by $\sum_j$.

\begin{proof}
Let $p < \infty$. Then, from the previous lemma and \eqref{eq:sum}, we have
\begin{align}
\| \partial_t^l \partial_x^\alpha \tGamma \|_{L^p(L_T(\varepsilon))}^p
&\le C \sum_{j,*} h^2 d_j^{1-(p-1)N-(|\alpha|+2l)p} \\
&\le C h^{3-(p-1)N-(|\alpha|+2l)p} \sum_{j,*} \left( \frac{h}{d_j} \right)^{-1+(p-1)N+(|\alpha|+2l)p} \\
&\le C h^{3-(p-1)N-(|\alpha|+2l)p},
\end{align}
when \eqref{eq:assum-gamma} holds.
The other cases can be obtained similarly since
\begin{equation}
\sum_{j,*} 1 = d_{J_*}+2 \le C |\log h|
\qquad \text{and} \qquad
\sum_{j,*} d_j^r \le \sum_{j \ge 0} (2^r)^{-j-1} \le C
\end{equation}
for $r>0$.
\end{proof}

We also mention the global energy estimates for $F = \Gamma_h - \tGamma$.
\begin{lem}\label{lem:energy-F}
There exists a constant $C>0$ independent of $h$ that satisfies
\begin{equation}
\| F \|_{L^2(0,T;H^1(\Omegah))} + h \| F_t \|_{L^2(Q_{h,T})} \le C h^{-N/2}
\label{eq:energy-F}
\end{equation}
for any $T > 0$.
Moreover, we have
\begin{equation}
\| F \|_{L^2(Q_{h,*})} \le C h^{1-\frac{N}{2}}.
\label{eq:L2-F}
\end{equation}
\end{lem}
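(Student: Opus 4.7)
The plan is to split $F = \Gamma_h - \tGamma$ and control each piece by elementary parabolic energy arguments, without invoking the sharper dyadic estimates reserved for the later sections. The discrete piece $\Gamma_h$ is handled by testing the discrete equation \eqref{eq:disc-gamma} with $\Gamma_h$ and with $\Gamma_{h,t}$; the continuous piece $\tGamma$ is handled by the analogous tests on \eqref{eq:gamma}, followed by the spatial extension stability \eqref{eq:ext-global}. The initial-data bounds from \eqref{eq:norm-delta} provide the correct $h$-scaling on both sides.

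First I would test \eqref{eq:disc-gamma} with $v_h = \Gamma_h(t)$ to obtain
\begin{equation*}
\tfrac{1}{2}\tfrac{d}{dt}\|\Gamma_h\|_{L^2(\Omegah)}^2 + \|\Gamma_h\|_{H^1(\Omegah)}^2 = 0,
\end{equation*}
which, integrated in $t$ and combined with the $L^2$-stability of $P_h$ and with \eqref{eq:norm-delta} at $s=0,\ p=2$, yields $\|\Gamma_h\|_{L^2(0,T;H^1(\Omegah))} \le C\|\bdelta\|_{L^2(\Omega)} \le C h^{-N/2}$. Testing next with $v_h = \Gamma_{h,t}(t)$ gives $\|\Gamma_{h,t}\|_{L^2(\Omegah)}^2 + \tfrac{1}{2}\tfrac{d}{dt}\|\Gamma_h\|_{H^1(\Omegah)}^2 = 0$, and integration using the $H^1$-stability of $P_h$ from \cref{lem:approx}(i) together with \eqref{eq:norm-delta} at $s=1,\ p=2$ produces $\|\Gamma_{h,t}\|_{L^2(Q_{h,T})} \le C h^{-1-N/2}$, so that $h\|\Gamma_{h,t}\|_{L^2(Q_{h,T})} \le C h^{-N/2}$. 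For $\tGamma$ the same two tests on the continuous problem \eqref{eq:gamma} over $\Omega$ supply the corresponding bounds $\|\Gamma\|_{L^2(0,T;H^1(\Omega))} \le C h^{-N/2}$ and $\|\Gamma_t\|_{L^2(Q_T)} \le C h^{-1-N/2}$, which transfer to $\tGamma$ and $\tGamma_t$ on $\Omegah$ through \eqref{eq:ext-global} applied pointwise in $t$ (using that the reflection extension commutes with $\partial_t$). Adding the discrete and continuous contributions by the triangle inequality gives \eqref{eq:energy-F}.

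For the inner estimate \eqref{eq:L2-F} I would use that $|Q_{h,*}| \le C d_{J_*}^{N+2} \le C h^{N+2}$, which reduces the task to proving $\|F\|_{L^\infty(Q_{h,*})} \le C h^{-N}$. On the continuous side, the maximum principle for \eqref{eq:gamma} together with \eqref{eq:norm-delta} at $p=\infty$ yields $\|\Gamma(t)\|_{L^\infty(\Omega)} \le \|\bdelta\|_{L^\infty(\Omega)} \le C h^{-N}$, and the reflection extension of Subsection~\ref{subsec:tubular} preserves $L^\infty$. On the discrete side, the $L^2$-stability $\|\Gamma_h(t)\|_{L^2(\Omegah)} \le Ch^{-N/2}$ proved above, combined with the inverse inequality $\|w_h\|_{L^\infty(\Omegah)} \le C h^{-N/2}\|w_h\|_{L^2(\Omegah)}$ for $w_h \in V_h$ on quasi-uniform meshes, gives $\|\Gamma_h(t)\|_{L^\infty(\Omegah)} \le C h^{-N}$, and \eqref{eq:L2-F} follows.

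The only anticipated subtlety is bookkeeping around the extension operator: one must check that the $L^\infty$-stability used for $\tGamma$ really does hold for the specific reflection extension, and that the extension commutes with $\partial_t$ well enough that the continuous $L^2$-bounds for $\Gamma_t$ translate into bounds for $\tGamma_t$ without loss in the $h$-power. No dyadic decomposition or Green's-function regularity is required at this preliminary stage; the estimate is purely a consequence of \eqref{eq:norm-delta}, the parabolic energy tests on each side, and the inverse inequality on $V_h$.
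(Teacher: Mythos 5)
Your treatment of \eqref{eq:energy-F} is essentially identical to the paper's: test \eqref{eq:disc-gamma} with $\Gamma_h$ and with $\Gamma_{h,t}$, test \eqref{eq:gamma} likewise, invoke \eqref{eq:norm-delta} and the $L^2$/$H^1$-stability of $P_h$ from \cref{lem:approx}, then transfer the continuous bounds to $\tGamma$ via \eqref{eq:ext-global}. That part is fine and matches the paper.

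For \eqref{eq:L2-F} you take a genuinely different route. The paper exploits the pointwise-in-time consequence of \eqref{eq:tmp0}, namely $\|\Gamma_h(t)\|_{L^2(\Omegah)} \le \|P_h\bdelta\|_{L^2(\Omegah)}$, and simply integrates it over the short time interval $[0,d_{J_*}^2]$ (of length $\approx h^2$), giving $\|\Gamma_h\|_{L^2(Q_{h,*})}^2 \le d_{J_*}^2\|P_h\bdelta\|_{L^2}^2 \le Ch^{2-N}$ with no spatial localisation needed; the same argument applies to $\tGamma$. You instead bound $|Q_{h,*}|\le Cd_{J_*}^{N+2}$ and reduce to an $L^\infty$-estimate, obtaining $\|\Gamma(t)\|_{L^\infty}\le Ch^{-N}$ from the maximum principle and $\|\Gamma_h(t)\|_{L^\infty}\le Ch^{-N}$ from the global $L^\infty$--$L^2$ inverse inequality on quasi-uniform meshes. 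Both routes are correct and give the same power of $h$; yours makes the spatial smallness of $Q_{h,*}$ do real work (hence the appeal to the inverse inequality and to the $L^\infty$-stability of the reflection extension), whereas the paper only uses the temporal smallness of $Q_{h,*}$ and therefore needs nothing beyond the $L^2$-energy identity already established. The paper's version is thus shorter and avoids introducing the maximum principle or the inverse inequality at this preliminary stage, but your version is a perfectly valid alternative and the bookkeeping concerns you flag (commutation of the reflection extension with $\partial_t$, its $L^\infty$-stability) are indeed harmless for the standard reflection operator.
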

\begin{proof}
We first show the bound for the $L^2(0,T;H^1(\Omegah))$-norm.
Substituting $v_h = \Gamma_h$ into \eqref{eq:disc-gamma}, we have
\begin{equation}
\frac{1}{2}\frac{d}{dt} \| \Gamma_h \|_{L^2(\Omegah)}^2 + \| \Gamma_h \|_{H^1(\Omegah)}^2 = 0.
\end{equation}
Integrating this equality on the interval $(0,t)$, we obtain
\begin{equation}
\frac{1}{2} \| \Gamma_h(t) \|_{L^2(\Omegah)}^2 + \| \Gamma_h \|_{L^2(0,t; H^1(\Omegah))}^2 
= \frac{1}{2} \| P_h \bdelta \|_{L^2(\Omegah)}^2
\label{eq:tmp0}
\end{equation}
since $\Gamma_h(0) = P_h \bdelta$.
Therefore, \eqref{eq:norm-delta} gives the estimate $\| \Gamma_h \|_{L^2(0,T;H^1(\Omegah))} \le C h^{-N/2}$.
The estimate for $\tGamma$ is derived in the same way and thus we have $\| F \|_{L^2(0,T;H^1(\Omegah))} \le C h^{-N/2}$ by the triangle inequality.
The bound for $\| \Gamma_{h,t} \|_{L^2(Q_{h,T})}$ can be obtained by substituting $v_h=\Gamma_{h,t}$ into \eqref{eq:disc-gamma} and the estimate for $\| \tGamma_{t} \|_{L^2(Q_{h,T})}$ is as well.
Hence we can derive \eqref{eq:energy-F}.

We show the second inequality \eqref{eq:L2-F}.
Integrating \eqref{eq:tmp0} again, we have
\begin{equation}
\| \Gamma_h \|_{L^2(Q_{h,*})}^2
\le \int_0^{d_{J_*}^2} \| P_h \bdelta \|_{L^2(\Omegah)}^2 dt
\le C h^{2-N},
\end{equation}
which gives $\| \Gamma_h \|_{L^2(Q_{h,*})} \le Ch^{1-\frac{N}{2}}$.
The estimate for $\| \tGamma \|_{L^2(Q_{h,*})}$ is similar and thus we can complete the proof.
\end{proof}

\section{Proof of the main result}
\label{sec:proof}

\subsection{Reduction of the error estimates}
\label{subsec:reduction}

According to \cite{SchTW98}, we reduce the error estimate \eqref{eq:best} to the $L^1$-error estimates for $\Gamma$ and $\Gamma_h$
for $T \le 1$.
In the argument of \cite{SchTW98}, the Galerkin orthogonality (or compatibility)
\begin{equation} 
((u-u_h)_t, v_h)_\Omega + a_\Omega(u-u_h, v_h) = 0,
\quad \forall v_h \in V_h
\end{equation}
holds since $\Omega=\Omega_h$ and this identity is used repeatedly.
However, in our case, there appear additional terms induced by the boundary-skins.
Thus we begin this section by the \emph{asymptotic} Galerkin orthogonality.
In what follows, $\partial_{n_h}$ denotes the outward normal derivative on $\partial\Omegah$.

\begin{lem}[Asymptotic Galerkin orthogonality]
\label{lem:ago}
Assume $z$ solves
\begin{equation}
\begin{cases}
z_t + Az = \varphi, & \text{in } Q_T, \\
\partial_n z = \psi, & \text{on } \partial\Omega \times (0,T),
\end{cases}
\end{equation}
and $z_h$ solves
\begin{equation}
(z_{h,t}, v_h)_{\Omega_h} + a_\Omegah(z_h, v_h)
= (\tilde{\varphi}, v_h)_{\Omega_h} + (\tilde{\psi}, v_h)_{\partial\Omega_h},
\quad \forall v_h \in V_h
\end{equation}
for given $\varphi \in C(\overline{Q_T})$ and $\psi \in C(\partial\Omega \times (0,T))$.
Then, we have
\begin{equation}
((z_h - \tilde{z})_t, v_h)_{\Omega_h} + a_\Omegah(z_h - \tilde{z}, v_h)
=
- (\tilde{z}_t + A \tilde{z} - \tilde{\varphi}, v_h)_\OmghOmg
- (\partial_{n_h} \tilde{z} - \tilde{\psi}, v_h)_{\partial\Omega_h}
\label{eq:ago}
\end{equation}
\end{lem}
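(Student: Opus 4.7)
The strategy is to rewrite the expression $((z_h-\tilde{z})_t, v_h)_{\Omega_h} + a_{\Omega_h}(z_h - \tilde{z}, v_h)$ by splitting it into the $z_h$-part (handled by the discrete equation) and the $\tilde{z}$-part (handled by Green's formula on $\Omega_h$), and then compare with the PDE satisfied by $z$ on $\Omega$. The only mildly delicate point is that the PDE $z_t + Az = \varphi$ holds on $\Omega$, not on $\Omega_h$, so the integration over the discrepancy $\Omega_h \setminus \Omega$ will leave the unwanted boundary-skin residual.

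First, I would apply the definition of $z_h$ with test function $v_h \in V_h$ to get
\[
(z_{h,t}, v_h)_{\Omega_h} + a_{\Omega_h}(z_h, v_h) = (\tilde{\varphi}, v_h)_{\Omega_h} + (\tilde{\psi}, v_h)_{\partial\Omega_h}.
\]
Next, for $\tilde{z}$ (which is smooth on a neighborhood of $\overline{\Omega_h}$ by the extension construction in Subsection~\ref{subsec:tubular}), I would invoke Green's formula on $\Omega_h$ with $A = -\Delta + 1$:
\[
a_{\Omega_h}(\tilde{z}, v_h) = (A\tilde{z}, v_h)_{\Omega_h} + (\partial_{n_h} \tilde{z}, v_h)_{\partial\Omega_h}.
\]
Subtracting these two identities yields
\[
((z_h-\tilde{z})_t, v_h)_{\Omega_h} + a_{\Omega_h}(z_h - \tilde{z}, v_h) = (\tilde{\varphi} - \tilde{z}_t - A\tilde{z}, v_h)_{\Omega_h} - (\partial_{n_h}\tilde{z} - \tilde{\psi}, v_h)_{\partial\Omega_h}.
\]

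Finally, I would split the volume inner product over $\Omega_h$ into integrals over $\Omega \cap \Omega_h$ and $\Omega_h \setminus \Omega$. On $\Omega \cap \Omega_h$ the extension agrees with the original functions, so $\tilde{z}_t + A\tilde{z} = z_t + Az = \varphi = \tilde{\varphi}$ pointwise, hence this contribution vanishes. What remains is precisely $-(\tilde{z}_t + A\tilde{z} - \tilde{\varphi}, v_h)_{\OmghOmg}$ on the volume side, together with the boundary term already isolated, giving \eqref{eq:ago}. No serious obstacle arises; the argument is essentially bookkeeping, and the only point that requires (mild) attention is ensuring that the extension $\tilde{z}$ is regular enough on $\Omega_h \setminus \Omega \subset T(\varepsilon)$ for Green's formula to apply, which is guaranteed by the smoothness hypotheses on $\varphi$, $\psi$ and the $H^2$-extension estimates \eqref{eq:ext-global}--\eqref{eq:ext-tube}.
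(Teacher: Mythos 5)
Your proof is correct, and it is organized differently from the paper's. The paper's route first splits $(\tilde{z}_t, v_h)_{\Omega_h} + a_{\Omega_h}(\tilde{z}, v_h)$ via the identity \eqref{eq:gap} into an integral over $\Omega$ (where the weak form of $z$, tested against the extension $\tilde{v}_h$, is invoked) plus correction terms over the two boundary-skin lobes $\Omega \setminus \Omega_h$ and $\Omega_h \setminus \Omega$, which are then reorganized using the integration-by-parts formula \eqref{eq:ibp}. You instead apply Green's formula once on $\Omega_h$ itself, producing the strong-form residual $\tilde{z}_t + A\tilde{z} - \tilde{\varphi}$ on $\Omega_h$, and then simply observe that this residual is identically zero on $\Omega \cap \Omega_h$ because the extension agrees with the original function there and the PDE holds in $\Omega$. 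This avoids ever introducing the extension $\tilde{v}_h$ of the discrete test function to $\Omega$ and avoids the intermediate decomposition into $I_1$ and $I_2$, yielding a shorter and more transparent argument. The two proofs rest on the same ingredients (smoothness of $\tilde{z}$ up to $\Omega_h \setminus \Omega$, the PDE on $\Omega$, Green's formula), and both require the same $H^2$-regularity of the extension; the paper's style, however, parallels the manipulations it reuses in \cref{lem:reduce}, which is likely why the authors chose it.
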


\begin{proof}
We observe that the formula
\begin{equation}
(\nabla v, \nabla w)_\OmgOmgh - (\nabla v, \nabla w)_\OmghOmg
= (\partial_n v, w)_{\partial\Omega} - (\partial_{n_h} v, w)_{\partial\Omega_h} 
- (\Delta v, w)_{\OmgOmgh} + (\Delta v, w)_\OmghOmg
\label{eq:ibp}
\end{equation}
holds for $v \in H^2(\Omega \cup \Omegah)$ and $w \in H^1(\Omega \cup \Omegah)$ by integration by parts.
Now, from the identity \eqref{eq:gap}, we have
\begin{equation}
(\tilde{z}_t, v_h)_{\Omega_h} + a_\Omegah(\tilde{z}, v_h)
=
I_1 + I_2,
\end{equation}
where
\begin{equation}
I_1 = (z_t, \tilde{v}_h)_\Omega + a_\Omega(z,\tilde{v}_h)
= (\varphi, \tilde{v}_h)_\Omega + (\psi, \tilde{v}_h)_{\partial\Omega}
\end{equation}
and
\begin{align}
I_2 =
- (z_t, \tilde{v}_h)_{\OmgOmgh} - a_\OmgOmgh(z, \tilde{v}_h)
+ (\tilde{z}_t, v_h)_{\OmghOmg} + a_\OmghOmg(\tilde{z}, v_h).
\end{align}
Again, from \eqref{eq:gap}, we have
\begin{equation}
I_1 = (\tilde{\varphi}, v_h)_{\Omega_h} + (\varphi, \tilde{v}_h)_\OmgOmgh - (\tilde{\varphi}, v_h)_\OmghOmg + (\psi, \tilde{v}_h)_{\partial\Omega}.
\end{equation}
Moreover, due to the formula \eqref{eq:ibp}, we have
\begin{align}
I_2
&= -(z_t + Az, \tilde{v}_h)_\OmgOmgh + (\tilde{z}_t + A \tilde{z}, v_h)_\OmghOmg
- (\partial_n z, \tilde{v}_h)_{\partial\Omega} + (\partial_n \tilde{z}, v_h)_{\partial\Omega_h} \\
&= -(\varphi, \tilde{v}_h)_\OmgOmgh + (\tilde{z}_t + A \tilde{z}, v_h)_\OmghOmg
- (\psi, \tilde{v}_h)_{\partial\Omega} + (\partial_n \tilde{z}, v_h)_{\partial\Omega_h}
\end{align}
Therefore, we obtain
\begin{align}
&(\tilde{z}_t, v_h)_{\Omega_h} + a_\Omegah(\tilde{z}, v_h) \\
&=
(\tilde{\varphi}, v_h)_{\Omega_h}
+ (\tilde{z}_t + A \tilde{z} - \tilde{\varphi}, v_h)_\OmghOmg
+ (\partial_n \tilde{z}, v_h)_{\partial\Omega_h} \\
&=
(\tilde{\varphi}, v_h)_{\Omega_h} + (\tilde{\psi}, v_h)_{\partial\Omega_h} + (\tilde{z}_t + A \tilde{z} - \tilde{\varphi}, v_h)_\OmghOmg
+ (\partial_n \tilde{z} - \tilde{\psi}, v_h)_{\partial\Omega_h},
\end{align}
which implies the desired equality owing to the definition of $z_h$.
\end{proof}

Now, we turn to the error estimates. Assume $T \le 1$ and $u_h(0) = P_h \tilde{u}_0$.
As observed in Section~\ref{sec:outline}, we have
\begin{equation}
|(\tilde{u}-u_h)(x_0,T)|
\le C \inf_{\chi \in C^0([0,T]; V_h)} \|\tilde{u}-\chi\|_{L^\infty(Q_{h,T})} + |((P_h\tilde{u}-u_h)(T), \bdelta)_{\Omega_h}|.
\end{equation}
We address
the last term $((P_h\tilde{u}-u_h)(T), \bdelta)_{\Omega_h}$ that is represented as follows.
Recall that $F = \Gamma_h - \tilde{\Gamma}$.

\begin{lem}\label{lem:reduce}
Assume that $u_{h,0} = P_h \tu_0$. Then,
for any $\chi \in C^0([0,T]; V_h)$, we have
\begin{equation}
((u_h - P_h\tilde{u})(T), \bdelta)_{\Omega_h} = \sum_{j=0}^7 E_j,
\label{eq:expression}
\end{equation}
where
\begin{align}
E_0 &= \int_0^T \left[ (\tilde{u} - \chi, F_t )_{\Omega_h}
+ a_{\Omega_h}(\tilde{u} - \chi, F) \right] dt,
& & \\
E_1 &= \int_0^T (\tilde{u} - \chi, \tilde{\Gamma}_t + A\tilde{\Gamma})_{\Omega_h \setminus \Omega} dt,
&
E_2 &= \int_0^T (\tilde{u} - \chi, \partial_{n_h} \tilde{\Gamma})_{\partial\Omega_h} dt,
\\
E_3 &= \int_0^T (\tilde{f} - \tilde{u}_t - A\tilde{u}, F)_{\Omega_h \setminus \Omega} dt,
&
E_4 &= \int_0^T (\tilde{g} - \partial_{n_h} \tilde{u}, F)_{\partial\Omega_h} dt,
\\
E_5 &= \int_0^T \left[ (\tilde{f} - \tilde{u}_t, \tilde{\Gamma})_{\Omega_h \setminus \Omega} - (f-u_t, \Gamma)_{\Omega \setminus \Omega_h} \right] dt,
& & \\
E_6 &= \int_0^T \left[ a_{\Omega\setminus\Omega_h}(u,\Gamma) - a_{\Omega_h\setminus\Omega}(\tilde{u}, \tilde{\Gamma}) \right] dt
&
E_7 &= \int_0^T \left[ (\tilde{g}, \tilde{\Gamma})_{\partial\Omega_h} - (g,\Gamma)_{\partial\Omega} \right] dt.
\end{align}
Here, in each inner-product, the time of the left function is $t$ and the right is $T-t$.
For example, the term $(g,\Gamma)_{\partial\Omega}$ denotes $(g(t),\Gamma(T-t))_{\partial\Omega}$.
\end{lem}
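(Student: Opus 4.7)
The plan is to express $((u_h - P_h\tu)(T), \bdelta)_{\Omega_h}$ as a time integral and then decompose it by exploiting the equations for $u$, $u_h$, $\Gamma$, and $\Gamma_h$. Since $(u_h - P_h\tu)(T) \in V_h$ and $\Gamma_h(0) = P_h\bdelta$, one has $((u_h - P_h\tu)(T), \bdelta)_{\Omega_h} = ((u_h - P_h\tu)(T), \Gamma_h(0))_{\Omega_h}$; combined with $(u_h - P_h\tu)(0) = 0$, which follows from $u_{h,0} = P_h\tu_0$, the fundamental theorem of calculus yields
\[
  ((u_h - P_h\tu)(T), \bdelta)_{\Omega_h} = \int_0^T \tfrac{d}{dt}(u_h - P_h\tu, \Gamma_h(T-t))_{\Omega_h}\,dt.
\]

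Next I would expand the time derivative. Since $u_h - P_h\tu \in V_h$, the weak form \eqref{eq:disc-gamma} for $\Gamma_h$ gives $(u_h - P_h\tu, \Gamma_{h,t})_{\Omega_h} = -a_\Omegah(u_h - P_h\tu, \Gamma_h)$; testing \eqref{eq:disc-parabolic} against $\Gamma_h(T-t) \in V_h$ handles $u_{h,t}$; and $(P_h\tu_t - \tu_t, \Gamma_h)_{\Omega_h} = 0$ because $\Gamma_h \in V_h$. Integration by parts on $\Omega_h$ (valid since $\tu$ is smooth) converts $-a_\Omegah(\tu, \Gamma_h)$ into $-(A\tu, \Gamma_h)_{\Omega_h} - (\partial_{n_h}\tu, \Gamma_h)_{\partial\Omega_h}$. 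Noting $\tf - \tu_t - A\tu = 0$ on $\Omega \cap \Omega_h$ (the equation for $u$) restricts the corresponding bulk integral to $\OmghOmg$, giving
\[
  \tfrac{d}{dt}(u_h - P_h\tu, \Gamma_h(T-t))_{\Omega_h} = a_\Omegah(\tu - P_h\tu, \Gamma_h) + (\tf - \tu_t - A\tu, \Gamma_h)_{\OmghOmg} + (\tg - \partial_{n_h}\tu, \Gamma_h)_{\partial\Omega_h}.
\]

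To introduce an arbitrary $\chi \in C^0([0,T];V_h)$, I would write $\tu - P_h\tu = (\tu - \chi) - (P_h\tu - \chi)$ and test \eqref{eq:disc-gamma} against $P_h\tu - \chi \in V_h$: this gives $a_\Omegah(P_h\tu - \chi, \Gamma_h) = -(P_h\tu - \chi, \Gamma_{h,t})_{\Omega_h} = -(\tu - \chi, \Gamma_{h,t})_{\Omega_h}$, the final equality by $\Gamma_{h,t} \in V_h$. Substituting $\Gamma_h = \tGamma + F$ and $\Gamma_{h,t} = \tGamma_t + F_t$ everywhere, the $F$-contributions of the bulk and boundary residuals yield exactly $E_3 + E_4$, and $(\tu - \chi, F_t)_{\Omega_h} + a_\Omegah(\tu - \chi, F)$ is the integrand of $E_0$.

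The remaining $\tGamma$-pieces are
\[
  \int_0^T \bigl[a_\Omegah(\tu - \chi, \tGamma) + (\tu - \chi, \tGamma_t)_{\Omega_h} + (\tf - \tu_t - A\tu, \tGamma)_{\OmghOmg} + (\tg - \partial_{n_h}\tu, \tGamma)_{\partial\Omega_h}\bigr]\,dt.
\]
Integration by parts of the first bilinear form, together with $\tGamma_t + A\tGamma = 0$ on $\Omega$, collapses the bulk to $\OmghOmg$ and produces $E_1 + E_2$. For the last two pieces, I would apply identity \eqref{eq:ibp} from the proof of \cref{lem:ago} with $v = \tu$ and $w = \tGamma$; combined with $\partial_n\Gamma = 0$ on $\partial\Omega$, $Au = f - u_t$ on $\Omega$, and $\partial_n u = g$ on $\partial\Omega$, this recasts the expression as exactly $E_5 + E_6 + E_7$. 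This final shuttling between the two disjoint symmetric-difference regions $\OmghOmg$ and $\OmgOmgh$ and between the boundaries $\partial\Omega$ and $\partial\Omega_h$ is the main obstacle; the identity \eqref{eq:ibp} carries out the sign bookkeeping.
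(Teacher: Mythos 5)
Your proof is correct and takes essentially the same route as the paper's: track $\frac{d}{dt}(u_h - P_h\tu, \Gamma_h(T-t))_{\Omega_h}$, eliminate the $\Gamma_{h,t}$ and $u_{h,t}$ terms via the two discrete weak forms, use the continuous equations for $u$ and $\Gamma$ to localize bulk terms to $\Omega_h\setminus\Omega$, split $\Gamma_h = \tGamma + F$, and finish with the integration-by-parts identity \eqref{eq:ibp}. The only cosmetic difference is that the paper packages the discrete-weak-form manipulations through two invocations of the asymptotic Galerkin orthogonality (\cref{lem:ago}) — once for $(u, u_h)$ and once, as the null identity \eqref{eq:expression-3}, for $(\Gamma, \Gamma_h)$ — whereas you unfold those steps directly.
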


\begin{proof}
By the asymptotic Galerkin orthogonality \eqref{eq:ago}, we have
\begin{align}
\frac{d}{dt} (u_h(t), \Gamma_h(T-t))_\Omegah
&=
(u_{h,t}(t), \Gamma_h(T-t))_\Omegah + a_\Omegah(u_h(t), \Gamma_h(T-t)) \\
&=
(\tilde{u}_t(t), \Gamma_h(T-t))_\Omegah + a_\Omegah(\tilde{u}(t), \Gamma_h(T-t)) \\
&- (\tilde{u}_t + A \tilde{u} - \tilde{f}, \Gamma_h)_\OmghOmg
- (\partial_\nh \tilde{u} - \tilde{g}, \Gamma_h)_{\partial\Omega_h}.
\end{align}
Here we abbreviated the time variable $t$ and $T-t$ as in the statement of the lemma,
and we use the same abbreviation in the rest of the proof.
Integrating the both sides, we have
\begin{multline}
((u_h - P_h\tilde{u})(T), \bdelta)_{\Omega_h}
= \int_0^T \left[ (\tilde{u}, \Gamma_{h,t})_\Omegah + a_\Omegah(\tilde{u},\Gamma_h) \right] dt \\
- \int_0^T \left[ (\tilde{u}_t + A \tilde{u} - \tilde{f}, \Gamma_h)_\OmghOmg
+ (\partial_\nh \tilde{u} - \tilde{g}, \Gamma_h)_{\partial\Omega_h} \right] dt.
\label{eq:expression-1}
\end{multline}
By the definition of $\Gamma$ and the identity \eqref{eq:gap}, we have
\begin{equation}
(\tilde{u},\tilde{\Gamma}_t)_\Omegah + a_\Omegah(\tilde{u}, \tilde{\Gamma})
+ (u,\Gamma_t)_\OmgOmgh + a_\OmgOmgh(u,\Gamma)
- (\tilde{u},\tilde{\Gamma}_t)_\OmghOmg - a_\OmghOmg(\tilde{u}, \tilde{\Gamma})
= 0,
\end{equation}
which, together with \eqref{eq:ibp}, yields
\begin{equation}
- (\tu, \tGamma_t)_\Omegah - a_\Omegah(\tu, \tGamma)
+ (\tu, \tGamma_t + A \tGamma)_\OmghOmg
+ (\tu, \partial_\nh \tGamma)_{\partial\Omega} = 0.
\label{eq:expression-2}
\end{equation}
Moreover, the asymptotic Galerkin orthogonality \eqref{eq:ago} implies
\begin{equation}
- (\chi, F_t)_\Omegah - a_\Omegah(\chi,F)
- (\chi,\tGamma_t + A \tGamma)_\OmghOmg - (\chi, \partial_\nh \tGamma)_{\partial\Omegah}
= 0.
\label{eq:expression-3}
\end{equation}
for any $\chi \in V_h$, since $\partial_n \Gamma = 0$ on $\partial\Omega$.
Adding \eqref{eq:expression-2} and \eqref{eq:expression-3} to the right hand side of \eqref{eq:expression-1},
we have
\begin{multline}
((u_h - P_h\tilde{u})(T), \bdelta)_{\Omega_h}
= E_0 + E_1 + E_2  \\
+ \int_0^T \left[ (\tf - \tu - A \tu, \Gamma_h)_\OmghOmg
+ (\tg - \partial_\nh \tu, \Gamma_h)_{\partial\Omega_h} \right] dt.
\label{eq:expression-4}
\end{multline}
We calculate $(\tu_t+A\tu-\tf,\tGamma)_\OmghOmg$.
Owing to \eqref{eq:ibp}, we have
\begin{multline}
(\tu_t+A\tu-\tf,\tGamma)_\OmghOmg - (u+Au-f,\Gamma)_\OmgOmgh \\
= (\tu_t - \tf, \tGamma)_\OmghOmg - (u_t-f, \Gamma)_\OmgOmgh
+ a_\OmghOmg(\tu,\tGamma) - a_\OmgOmgh(u,\Gamma)
+ (\partial_\nh \tu, \tGamma)_{\partial\Omegah} - (g,\Gamma)_{\partial\Omega},
\end{multline}
which implies
\begin{multline}
(\tu_t+A\tu-\tf,\tGamma)_\OmghOmg
= \left[ (\tu_t - \tf, \tGamma)_\OmghOmg - (u_t-f, \Gamma)_\OmgOmgh  \right]
+ \left[ a_\OmghOmg(\tu,\tGamma) - a_\OmgOmgh(u,\Gamma) \right] \\
+ \left[ (\tg, \tGamma)_{\partial\Omegah} - (g,\Gamma)_{\partial\Omega} \right]
+ (\partial_\nh \tu - \tg, \tGamma)_{\partial\Omegah}
\label{eq:expression-5}
\end{multline}
since $(u+Au-f,\Gamma)_\OmgOmgh = 0$.
We can obtain the desired equation from \eqref{eq:expression-4} and \eqref{eq:expression-5}.
\end{proof}

In the expression \eqref{eq:expression}, the principal part is $E_0$, since other terms, which are induced by the boundary-skin, disappear when $\Omega = \Omegah$.
We can address the term $E_0$ in the same way as \cite[Section~3]{SchTW98}, 
since the calculation is performed on the domain $\Omegah$ only.
Indeed, we can obtain the following estimate with the aid of \eqref{eq:ext-global} and \eqref{eq:ext-local}:
\begin{equation}
|E_0| \le C \left( |\log h|^{\underline{k}} + \| F_t \|_{L^1(Q_{h,T})} + h^{-1} |\log h|^{-\underline{k}} \| F \|_{L^1(0,T;W^{1,1}(\Omegah))} \right)
\| \tilde{u} - \chi \|_{L^\infty(Q_{h,T})}.
\label{eq:E0}
\end{equation}
In order to handle other terms, 
we recall the estimates given in \cref{lem:gamma}.

\begin{lem}
\label{lem:Ej}
Assume $T \le 1$.
Let $E_j \, (j=1,2,\dots,7)$ be the terms appearing in \cref{lem:reduce}.
Then, we have
\begin{align}
\sum_{j=1}^{2} |E_j| &\le C \| \tu - \chi \|_{L^\infty(Q_{h,T})},
\label{eq:E12} \\
\sum_{j=3}^{5} |E_j| &\le C h^2 \left( 1 + h^{-1} \| F \|_{L^1(0,T;W^{1,1}(\Omegah))} \right)  \left( \| u \|_{L^\infty(0,T;W^{2,\infty}(\Omega))} + \| u_t \|_{L^\infty(Q_T)} \right),
\label{eq:E345} \\
\sum_{j=6}^{7} |E_j| &\le Ch^2 |\log h| \| u \|_{L^\infty(0,T;W^{2,\infty}(\Omega))},
\label{eq:E67}
\end{align}
for any $\chi \in C^0([0,T]; V_h)$.
\end{lem}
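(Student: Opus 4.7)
The plan is to apply H\"older's inequality to each $E_j$, separating a ``data factor'' (involving $\tu - \chi$, $u$, $f$, $g$ and their derivatives) from a ``Green-function factor'' (involving $\tGamma$ or $F$). Every $E_j$ integrates only over the boundary skin $\OmghOmg \cup \OmgOmgh \subset T(\varepsilon)$ or the polygonal boundary $\dOmegah$, so Lemma~\ref{lem:tube} and the regularity bounds of Lemma~\ref{lem:gamma} will produce the explicit powers of $h$ (via $\varepsilon = c_0 h^2$) that yield the asserted estimates. The data factor will be controlled by the stated $L^\infty$-norms, while the factor $F$ will be absorbed into $\|F\|_{L^1(0,T; W^{1,1}(\Omegah))}$.

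\textbf{Volume terms $E_1, E_3, E_5, E_6$.} For $E_1$, H\"older together with Lemma~\ref{lem:gamma}(i) applied to $\tGamma_t$ and $\nabla^2\tGamma$ (both falling under case (i) with $p=1$) gives $\|\tGamma_t + A\tGamma\|_{L^1(L_T(\varepsilon))} \le Ch$, hence $|E_1| \le C\|\tu - \chi\|_{L^\infty(Q_{h,T})}$. For $E_3$, the $L^\infty$-norm of $\tf - \tu_t - A\tu$ is bounded by $C(\|u_t\|_{L^\infty} + \|u\|_{W^{2,\infty}})$, while \eqref{eq:tube-4} with $p=1$ combined with the continuous trace $W^{1,1}(\Omegah) \hookrightarrow L^1(\dOmegah)$ yields $\|F\|_{L^1(L_T(\varepsilon) \cap \Omegah)} \le C\varepsilon \|F\|_{L^1(0,T; W^{1,1}(\Omegah))}$, which is $Ch^2 \cdot h^{-1}\|F\|_{L^1(W^{1,1})}$. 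For $E_5$, each piece is bounded by $C(\|u_t\|_{L^\infty} + \|u\|_{W^{2,\infty}})\|\tGamma\|_{L^1(L_T(\varepsilon))}$, and Lemma~\ref{lem:gamma}(iii) with $p=1, l=0, |\alpha|=0$ gives $\|\tGamma\|_{L^1(L_T(\varepsilon))} \le Ch^2$. For $E_6$, the dominant contribution is $\|\nabla u\|_{L^\infty} \|\nabla \tGamma\|_{L^1(L_T(\varepsilon))}$; this falls in the borderline case~(ii) of Lemma~\ref{lem:gamma} with $p=1, |\alpha|=1$, yielding $C h^2 |\log h| \|u\|_{W^{1,\infty}}$.

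\textbf{Boundary terms $E_2, E_4, E_7$: the main obstacle.} The difficulty is that a naive H\"older bound $|E_2| \le \|\tu - \chi\|_{L^\infty} \|\partial_{n_h} \tGamma\|_{L^1(\Sigma_{h,T})}$ only gives an $O(|\log h|)$ coefficient on the right-hand side (by Lemma~\ref{lem:gamma}(ii)), not the required $h$-independent constant. The remedy is to exploit the exact Neumann condition $\partial_n \Gamma = 0$ on $\dOmega$ through the homeomorphism $\pi\colon \dOmegah \to \dOmega$: since $\nabla\Gamma(\pi(y)) \cdot n(\pi(y)) = 0$, we write
\[
\partial_{n_h} \tGamma(y) = \bigl[\nabla\tGamma(y) - \nabla\tGamma(\pi(y))\bigr]\cdot n_h(y) + \nabla\tGamma(\pi(y)) \cdot \bigl[n_h(y) - n(\pi(y))\bigr].
\]
The first bracket is controlled in $L^1(\Sigma_{h,T})$ by $C\|\nabla^2\tGamma\|_{L^1(L_T(\varepsilon))} = O(h)$ via \eqref{eq:tube-2} and Lemma~\ref{lem:gamma}(i), and the second by $\|n_h - n\circ\pi\|_{L^\infty(\dOmegah)} \|\nabla\tGamma\|_{L^1(\Sigma_{h,T})} = O(h|\log h|)$ via \eqref{eq:tube-5} and Lemma~\ref{lem:gamma}(ii); since $h|\log h| \le C$ for small $h$, we obtain $|E_2| \le C\|\tu - \chi\|_{L^\infty(Q_{h,T})}$. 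An identical $\pi$-subtraction applied to $\partial_{n_h} \tu - \tg$ (using $\partial_n u = g$ on $\dOmega$) produces $\|\partial_{n_h} \tu - \tg\|_{L^\infty(\dOmegah)} \le Ch\|u\|_{W^{2,\infty}}$, which combined with $\|F\|_{L^1(\Sigma_{h,T})} \le C\|F\|_{L^1(0,T; W^{1,1}(\Omegah))}$ handles $E_4$. Finally, $E_7$ is treated by decomposing $\tg\tGamma - (g\Gamma)\circ\pi = (\tg - g\circ\pi)\tGamma + g\circ\pi\,(\tGamma - \tGamma\circ\pi)$ and converting the remaining $\dOmega$-integral to $\dOmegah$ via \eqref{eq:tube-1} (with $O(\varepsilon)$ error); combining \eqref{eq:tube-2}, Lemma~\ref{lem:gamma}(ii)--(iii), and $\|\tg - g\circ\pi\|_{L^\infty(\dOmegah)} \le C h^2 \|g\|_{W^{1,\infty}} \le C h^2 \|u\|_{W^{2,\infty}}$ gives the $Ch^2|\log h|\|u\|_{W^{2,\infty}}$ bound. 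The $\pi$-based cancellation of the exact boundary condition is the key device throughout, and will be the step requiring the most care.
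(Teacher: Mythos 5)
Your proposal is correct and follows essentially the same route as the paper's proof. The central device — exploiting the exact Neumann conditions $\partial_n\Gamma = 0$ and $\partial_n u = g$ on $\partial\Omega$ through the homeomorphism $\pi\colon\partial\Omega_h\to\partial\Omega$ together with \eqref{eq:tube-2} and \eqref{eq:tube-5} — is exactly what the paper uses for $E_2$, $E_4$, and $E_7$, and your choices of Lemma~\ref{lem:gamma} cases (i), (ii), (iii) for the various $\tGamma$-norms match. The only cosmetic differences are that you bound $\|F\|_{L^1(\Omega_h\setminus\Omega)}$ via \eqref{eq:tube-4} rather than \eqref{eq:tube-3}, and in the $E_2$ decomposition you group $\nabla\tGamma(\pi(y))$ with the normal-vector difference whereas the paper groups $\nabla\tGamma(y)$ there — both are valid, though the paper's grouping is marginally tidier because it lets you cite Lemma~\ref{lem:gamma}(ii) for $\|\nabla\tGamma\|_{L^1(\Sigma_{h,T})}$ directly rather than first transferring $\|\nabla\Gamma\circ\pi\|_{L^1(\partial\Omega_h)}$ back to an integral over $\partial\Omega$.
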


\begin{proof}
Since \cref{lem:gamma} yields
\begin{equation}
\| \tGamma_t + A \tGamma \|_{L^1(L_T(\varepsilon))}
\le C h,
\label{eq:E1-1}
\end{equation}
we have
\begin{equation}
|E_1| \le C h \| \tu- \chi \|_{L^\infty(Q_{h,T})}.
\end{equation}
For the estimate of $E_2$, we recall \cref{lem:tube}.
Noting that $\nabla \Gamma \cdot n = 0$ on $\partial\Omega$, we have
\begin{equation}
\| \partial_\nh \tGamma \|_{L^1(\partial \Omegah)}
\le \| \nabla\tGamma \cdot (n_h - n \circ \pi) \|_{L^1(\partial\Omegah)}
+ \| [ \nabla\tGamma - (\nabla\Gamma \circ \pi) ] \cdot (n \circ \pi) \|_{L^1(\partial\Omegah)}
\label{eq:tmp1}
\end{equation}
at each time.
Therefore, owing to \eqref{eq:tube-5}, \eqref{eq:tube-2}, and \cref{lem:gamma}, we have
\begin{align}
\| \partial_\nh \tGamma \|_{L^1(\Sigma_{h,T})}
&\le C h \| \nabla \tGamma \|_{L^1(\Sigma_{h,T})}
+ C \| \nabla^2 \tGamma \|_{L^1(L_T(\varepsilon))} \\
&\le C h |\log h|,
\label{eq:E2-1}
\end{align}
which leads to
\begin{equation}
|E_2| \le Ch|\log h| \| \tu-\chi\|_{L^\infty(Q_{h,T})}.
\end{equation}
Hence we establish \eqref{eq:E12}.

Let us prove \eqref{eq:E345}.
By \eqref{eq:tube-3} and the trace inequality, we have
\begin{equation}
|E_3| \le C \left( \| u_t \|_{L^\infty(Q_T)} + \| u \|_{L^\infty(0,T;W^{2,\infty}(\Omega))} \right)
\cdot \varepsilon \| F \|_{L^1(0,T; W^{1,1}(\Omega))}.
\label{eq:E3}
\end{equation}
Noting that $g \circ \pi = ((\nabla u) \circ \pi) \cdot (n \circ \pi)$,
we have, at each time,
\begin{align}
& \| \tg - \partial_\nh \tu \|_{L^\infty(\partial\Omegah)} \\
&\le \| \tg - g \circ \pi \|_{L^\infty(\partial\Omegah)}
+ \| ((\nabla u) \circ \pi - \nabla \tu) \cdot (n \circ \pi) \|_{L^\infty(\partial\Omegah)}
+ \| \nabla \tu \cdot (n \circ \pi - \nh) \|_{L^\infty(\partial\Omegah)} \\
&\le Ch \| u \|_{L^\infty(0,T; W^{2,\infty}(\Omega))}
\end{align}
from \eqref{eq:tube-2} and \eqref{eq:tube-5}.
Therefore, we have
\begin{align}
|E_4|
&\le
\| \tg - \partial_\nh \tu \|_{L^\infty(\Sigma_{h,T})}
\| F \|_{L^1(\Sigma_{h,T})} \\
&\le
Ch^2 \| u \|_{L^\infty(0,T; W^{2,\infty}(\Omega))}
\cdot h^{-1} \| F \|_{L^1(0,T;W^{1,1}(\Omega))} .
\label{eq:E4}
\end{align}
The estimate of $E_5$ follows from \cref{lem:gamma}.
Indeed, since $\| \tGamma \|_{L^1(L_T(\varepsilon))} \le C h^2$, we have
\begin{align}
|E_5|
&\le
C \| \tf - \tu_t \|_{L^\infty(L_T(\varepsilon))}
\cdot \| \tGamma \|_{L^1(L_T(\varepsilon))} \\
&\le
C h^2 \left( \| u_t \|_{L^\infty(Q_T)} + \| u \|_{L^\infty(0,T;W^{2,\infty}(\Omega))} \right).
\label{eq:E5}
\end{align}
Summarizing \eqref{eq:E3}, \eqref{eq:E4}, and \eqref{eq:E5}, we can obtain \eqref{eq:E345}.

Finally, we show \eqref{eq:E67}.
We first observe that the inequality $\| \nabla \tGamma \|_{L^1(L_T(\varepsilon))} \le C h^2 |\log h|$
holds from \cref{lem:gamma}.
Thus we have
\begin{equation}
|E_6| \le \| \nabla \tu \|_{L^\infty(L_T(\varepsilon))}
\cdot \| \nabla \tGamma \|_{L^1(L_T(\varepsilon))}
\le C h^2 |\log h| \| u \|_{L^\infty(0,T; W^{2,\infty}(\Omega))}.
\label{eq:E6}
\end{equation}
To address $E_7$, we perform the calculation similar to \eqref{eq:tmp1}.
Observe that
\begin{align}
|(\tg, \tGamma)_{\partial\Omegah} - (g,\Gamma)_{\partial\Omega}|
&\le
\int_{\partial\Omegah} |\tg\tGamma - (g\Gamma) \circ \pi | ds
+ \left| \int_{\partial\Omegah} (g\Gamma) \circ \pi ds
- \int_{\partial\Omega} g\Gamma ds \right| \\
&\le
C \| \nabla(\tg \tGamma) \|_{L^1(T(\varepsilon))}
+ C \varepsilon \| g\Gamma \|_{L^1(\partial\Omega)} \\
&\le
C \| u \|_{W^{2,\infty}(\Omega)} \left( \| \tGamma \|_{L^1(T(\varepsilon))} + \| \nabla \tGamma \|_{L^1(T(\varepsilon))} + \varepsilon \|\tGamma \|_{L^1(\partial\Omega)} \right)
\label{eq:tmp2}
\end{align}
from \eqref{eq:tube-1} and \eqref{eq:tube-2}.
By \cref{lem:gamma}, we have $\|\tGamma \|_{L^1(\partial\Omega)} \le C$, 
and, as observed, $\| \tGamma \|_{L^1(L_T(\varepsilon))} \le C h^2$
and
$\| \nabla \tGamma \|_{L^1(L_T(\varepsilon))} \le C h^2 |\log h|$ hold.
Together with \eqref{eq:tmp2}, they yield
\begin{equation}
|E_7| \le C h^2 |\log h| \| u \|_{L^\infty(0,T; W^{2,\infty}(\Omega))}.
\label{eq:E7}
\end{equation}
The desired estimate \eqref{eq:E67} follows from \eqref{eq:E6} and \eqref{eq:E7} immediately, and thus we can complete the proof of \cref{lem:Ej}.
\end{proof}

\begin{proof}[Proof of \cref{lem:pointwise}]
Now, we are in a position to show \cref{lem:pointwise}.
Substituting \eqref{eq:E0} and the results of \cref{lem:Ej} into \eqref{eq:expression}, and going back to \eqref{eq:pointwise-0},
we can obtain the desired estimate \eqref{eq:pointwise}.
\end{proof}


\subsection{Proof of \cref{lem:green}}
\label{subsec:proof}

In this subsection, we admit that \cref{lem:local-F,lem:duality} hold for now 
and complete the proof of \cref{lem:green}.
The proofs of \cref{lem:local-F,lem:duality} will be given in Sections~\ref{sec:local} and~\ref{sec:dual}, respectively.

\begin{proof}[Proof of \cref{lem:green}.]
By definition of $Q_{h,j}$ and the H\"older inequality, we have
\begin{equation}
\| F \|_{L^1(0,T; W^{1,1}(\Omegah))}
= \sum_{j,*} \left( \| F \|_{L^1(Q_{h,j})} + \| \nabla F \|_{L^1(Q_{h,j})} \right)
\le C \sum_{j,*} d_j^{\frac{N}{2}+1} \trplnorm{F}{1,Q_{h,j}}
\end{equation}
and \cref{lem:energy-F} implies
\begin{equation}
\| F \|_{L^1(0,T; W^{1,1}(\Omegah))} 
\le Ch + C \sum_j d_j^{\frac{N}{2}+1} \trplnorm{F}{1,Q_{h,j}}
\label{eq:green-3}
\end{equation}
since $d_{J_*} \approx C_* h$.
Similarly,
\begin{equation}
\| F_t \|_{L^1(Q_{h,T})} 
\le C + C \sum_j d_j^{\frac{N}{2}+1} \trplnorm{F_t}{Q_{h,j}}.
\label{eq:green-4}
\end{equation}
Therefore, in order to bound $\| F \|_{L^1(0,T; W^{1,1}(\Omegah))}$, we multiply \eqref{eq:local-F} by $d_j^{\frac{N}{2}+2}$ and then sum up with respect to $j$.
For $\| F_t \|_{L^1(Q_{h,T})}$, we replace $d_j^{\frac{N}{2}+2}$ by $d_j^{\frac{N}{2}+1}$ and repeat the same calculation.

Now, multiplying \eqref{eq:local-F} by $d_j^{\frac{N}{2}+2}$ and summing up, we have
\begin{multline}
\theta \sum_j d_j^{\frac{N}{2}+2} \trplnorm{F_t}{Q_{h,j}} 
+ \sum_j d_j^{\frac{N}{2}+1} \trplnorm{F}{1,Q_{h,j}}
+ \theta \sum_j \lambda_j d_j^{\frac{N}{2}+2} \| F(T) \|_{1, D_{h,j}}
\\
\le C_0 \theta \left( \theta \sum_j d_j^{\frac{N}{2}+2} \trplnorm{F_t}{Q'_{h,j}} 
+ \sum_j d_j^{\frac{N}{2}+1} \trplnorm{F}{1,Q'_{h,j}}
+ \theta \sum_j \lambda'_j d_j^{\frac{N}{2}+2} \| F(T) \|_{1, D'_{h,j}} 
\right) \\
+ C \sum_j d_j^{\frac{N}{2}+2} (I_j+X_j+G_j) 
+ C \sum_j d_j^{\frac{N}{2}} \trplnorm{F}{Q'_{h,j}}.
\end{multline}
Recall that the summation for $Q'_{h,j}$ is rewritten in terms of $Q_{h,j}$ (see \eqref{eq:summation-Qhj-prime}).
Then, together with \cref{lem:energy-F}, we have
\begin{multline}
\sum_j \left( d_j^{\frac{N}{2}+2} \trplnorm{F_t}{Q'_{h,j}} + d_j^{\frac{N}{2}+1} \trplnorm{F}{1,Q'_{h,j}} \right) \\
\le C h
+ 3 \cdot 2^{\frac{N}{2}+2} \sum_j \left( d_j^{\frac{N}{2}+2} \trplnorm{F_t}{Q_{h,j}} + d_j^{\frac{N}{2}+1} \trplnorm{F}{1,Q_{h,j}} \right),
\label{eq:sum-1}
\end{multline}
and similarly
\begin{equation}
\sum_j d_j^{\frac{N}{2}} \trplnorm{F}{Q'_{h,j}}
\le C h + 3 \cdot 2^{\frac{N}{2}} \sum_j d_j^{\frac{N}{2}} \trplnorm{F}{Q_{h,j}}.
\label{eq:sum-2}
\end{equation}
Moreover, we can see that
\begin{equation}
\sum_j \lambda'_j d_j^{\frac{N}{2}+2} \| F(T) \|_{1, D'_{h,j}}
\le 3 \cdot 2^{\frac{N}{2}+2} \sum_j \lambda_j d_j^{\frac{N}{2}+2} \| F(T) \|_{1, D_{h,j}},
\label{eq:sum-trace-T}
\end{equation}
which will be proved in Appendix~\ref{sec:appendix}.

Therefore, letting $\theta = C_0^{-1} \cdot 3^{-1} \cdot 2^{-\frac{N}{2}-3}$, 
we can kick-back the terms with local energy norms and the trace at $t=T$.
Consequently, we obtain
\begin{multline}
\sum_j \left( d_j^{\frac{N}{2}+2} \trplnorm{F_t}{Q_{h,j}} + d_j^{\frac{N}{2}+1} \trplnorm{F}{1,Q_{h,j}} \right) \\
\le Ch + C \sum_j d_j^{\frac{N}{2}+2} (I_j+X_j+G_j) 
+ C \sum_j d_j^{\frac{N}{2}} \trplnorm{F}{Q_{h,j}}.
\label{eq:local-F-1}
\end{multline}

The estimates of $I_j$ and $X_j$ are the same as in \cite{SchTW98} and thus we have
\begin{align}
I_j &\le C (h^{-1-N} + h^{-N} d_j^{-1}) d_j^{N/2} e^{-cd_j/h} \le C h^k d_j^{-\frac{N}{2}-k-1}
\label{eq:Ij} \\
X_j &\le C \left( h^{k+1} d_j^{-\frac{N}{2}-k-2} + h^k d_j^{-\frac{N}{2}-k-1} \right) \le C h^k d_j^{-\frac{N}{2}-k-1}
\label{eq:Xj}
\end{align}
from \eqref{eq:deltah} and \eqref{eq:gaussian}.
Moreover, \cref{lem:gap} yields
\begin{equation}
G_j \le C h^2 d_j^{-\frac{N}{2}-2} + C  h d_j^{-\frac{N}{2}-1}.
\end{equation}
Therefore, substituting them into \eqref{eq:local-F-1}, we have
\begin{equation}
\sum_j \left( d_j^{\frac{N}{2}+2} \trplnorm{F_t}{Q_{h,j}} + d_j^{\frac{N}{2}+1} \trplnorm{F}{1,Q_{h,j}} \right) 
\le Ch|\log h|^{\underline{k}} 
+ C \sum_j d_j^{\frac{N}{2}} \trplnorm{F}{Q_{h,j}}
\label{eq:green-5}
\end{equation}
owing to \eqref{eq:sum}.

Now, we apply the local $L^2$-estimate \eqref{eq:dual}.
Multiplying \eqref{eq:dual} by $d_j^{\frac{N}{2}}$ and summing up, we have
\begin{multline}
\sum_j d_j^{\frac{N}{2}} \trplnorm{F}{Q_{h,j}} \\
\le Ch + C \sum_i \left( h^2 \trplnorm{F_t}{Q_{h,i}} + h \trplnorm{F}{1,Q_{h,i}} \right)
\sum_j d_j^{\frac{N}{2}}
\min \left\{ \left( \frac{d_i}{d_j} \right)^{\frac{N}{2}+1}, \left( \frac{d_j}{d_i} \right)^{\frac{N}{2}+1} \right\} \\
+ C C_*^{-1} \left( \sum_j d_j^{\frac{N}{2}} \trplnorm{F}{Q_{h,j}} + \sum_j d_j^{\frac{N}{2}+1} \trplnorm{F}{1,Q_{h,j}} \right)
+ Ch |\log h| \| F \|_{L^1(0,T; W^{1,1}(\Omega_h))},
\end{multline}
owing to \eqref{eq:sum} and $h \le C_*^{-1}d_j$.
Here, we replaced $Q'_{h,j}$ by $Q_{h,j}$ in the summation as in \eqref{eq:sum-1} and \eqref{eq:sum-2}.
Since $\{ d_j \}_j$ is a geometric sequence, we can observe
\begin{equation}
\sum_{j \ge i} d_j^\alpha \le C d_i^\alpha,
\quad
\sum_{j \le i} d_j^{-\alpha} \le C d_j^{-\alpha}
\end{equation}
for $\alpha > 0$. This implies
\begin{equation}
\sum_j d_j^{\frac{N}{2}}
\min \left\{ \left( \frac{d_i}{d_j} \right)^{\frac{N}{2}+1}, \left( \frac{d_j}{d_i} \right)^{\frac{N}{2}+1} \right\}
\le C d_i^{\frac{N}{2}},
\end{equation}
and thus we have
\begin{multline}
\sum_j d_j^{\frac{N}{2}} \trplnorm{F}{Q_{h,j}} 
\le C h
+ C C_*^{-1} \left( \sum_j d_j^{\frac{N}{2}} \trplnorm{F}{Q_{h,j}} + \sum_j d_j^{\frac{N}{2}+2} \trplnorm{F_t}{Q_{h,j}} + \sum_j d_j^{\frac{N}{2}+1} \trplnorm{F}{1,Q_{h,j}} \right) \\
+ C h |\log h| \| F \|_{L^1(0,T; W^{1,1}(\Omega_h))},
\end{multline}
together with $hd_i^{-1} \le C_*^{-1}$, which implies
\begin{multline}
\sum_j d_j^{\frac{N}{2}} \trplnorm{F}{Q_{h,j}} 
\le C h
+ C C_*^{-1} \sum_j \left( d_j^{\frac{N}{2}+2} \trplnorm{F_t}{Q_{h,j}} + d_j^{\frac{N}{2}+1} \trplnorm{F}{1,Q_{h,j}} \right) \\
+ C h |\log h| \| F \|_{L^1(0,T; W^{1,1}(\Omega_h))},
\label{eq:dual-F_t}
\end{multline}
with $C_*$ large enough (independently of $h$).

Substituting \eqref{eq:dual-F_t} into \eqref{eq:green-5}
and again letting $C_*$ large enough to kick-back the summation in \eqref{eq:dual-F_t},
we have
\begin{equation}
\sum_j d_j^{\frac{N}{2}+1} \trplnorm{F}{1,Q_{h,j}}
\le C h|\log h|^{\underline{k}} + C h|\log h| \| F \|_{L^1(0,T; W^{1,1}(\Omega_h))}.
\end{equation}
Going back to \eqref{eq:green-3} and letting $h$ small enough, we establish
\begin{equation}
\| F \|_{L^1(0,T; W^{1,1}(\Omega_h))} \le C h|\log h|^{\underline{k}}.
\end{equation}
Repeating the same argument with \eqref{eq:green-4}, we can achieve
\begin{equation}
\| F_t \|_{L^1(Q_{h,T})} \le C + C \| F \|_{L^1(0,T; W^{1,1}(\Omega_h))} \le C.
\end{equation}
Hence, we complete the proof of \cref{lem:green},
and thus we can obtain the maximum-norm error estimate \eqref{eq:best} for $T \le 1$.
\end{proof}

\subsection{Proof of theorems for $T \ge 1$}
\label{subsec:long-time}

In the rest of this section, we show that \cref{thm:main,thm:semigroup,thm:dmr} for $T \ge 1$ are derived from the corresponding results for $T \le 1$.
We first show the exponentially decaying property for the discrete heat semigroup generated by $A_h$,
which corresponds to \cite[Lemma~3.3]{SchTW98} for the case $\Omega = \Omegah$.

\begin{lem}
Let $s \ge 0$ and $m > N/2$.
Then, we can find $\gamma > 0$ independently of $h$ which satisfies
\begin{equation}
\| A_h^s e^{-tA_h} v_h \|_{L^\infty(\Omegah)} \le C t^{-s-m} e^{-\gamma t} \| v_h \|_{L^\infty(\Omegah)},
\quad \forall v_h \in V_h, \, \forall t > 0,
\label{eq:decay}
\end{equation}
where $C$ is independent of $h$.
\end{lem}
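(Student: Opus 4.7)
The plan is to combine the spectral structure of $A_h$ on $L^2(\Omega_h)$ with the $L^\infty$-smoothing of the discrete heat semigroup coming from \cref{thm:semigroup}. Since every object in the statement is intrinsic to $\Omega_h$ and $V_h$, the boundary-skin layer plays no role here, and the argument is a direct adaptation of \cite[Lemma~3.3]{SchTW98} to our setting.

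First, I note that the identity $(A_h u_h,u_h)_{\Omega_h}=a_{\Omega_h}(u_h,u_h)\ge \|u_h\|_{L^2(\Omega_h)}^2$ shows that $A_h$ is self-adjoint and positive definite on the finite-dimensional space $V_h$, with spectrum contained in $[1,\infty)$. The spectral theorem together with the elementary inequality $\lambda^{s}e^{-t\lambda}\le C(s,\gamma_0)\,t^{-s}e^{-\gamma_0 t}$ valid on $\lambda\ge 1$ immediately yield, for any fixed $\gamma_0\in(0,1)$,
\begin{equation*}
\|A_h^{s} e^{-tA_h} v_h\|_{L^2(\Omega_h)}\le C\,t^{-s}\,e^{-\gamma_0 t}\,\|v_h\|_{L^2(\Omega_h)},\qquad t>0.
\end{equation*}
Independently, iterating the stability/smoothing estimate of \cref{thm:semigroup} with $q=\infty$ on half-intervals and using the semigroup property gives
\begin{equation*}
\|A_h^{j} e^{-tA_h} v_h\|_{L^\infty(\Omega_h)}\le C\,t^{-j}\,e^{-ct}\,\|v_h\|_{L^\infty(\Omega_h)}
\end{equation*}
for every nonnegative integer $j$, with $c>0$ independent of $h$.

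With the integer-order $L^\infty$ bound in hand, the extension to real $s\ge 0$ and the insertion of the extra factor $t^{-m}$ are straightforward. For the former I would extend the integer bound to real $s$ by Stein-type complex interpolation applied to the analytic family $z\mapsto A_h^{z} e^{-tA_h}$, using the $L^2$-estimate of Step~1 as the other endpoint; the finite-dimensional, self-adjoint nature of $A_h$ makes the interpolation uniform in $h$. For the latter I use, for any $0<\gamma<c$, the elementary bound $t^{m}e^{-(c-\gamma)t}\le C(m,c,\gamma)$, which permits the rewriting
\begin{equation*}
t^{-s}e^{-ct}=t^{-s-m}\,(t^{m}e^{-(c-\gamma)t})\,e^{-\gamma t}\le C\,t^{-s-m}\,e^{-\gamma t},
\end{equation*}
and when combined with the $L^\infty$ bound above yields \eqref{eq:decay}. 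The main obstacle I expect is ensuring that the interpolation step produces constants independent of $h$; this is where the uniform spectral lower bound $\sigma(A_h)\subset[1,\infty)$, the uniform $L^\infty$-stability of $e^{-tA_h}$ furnished by \cref{thm:semigroup}, and a careful choice of the analytic family in the Stein interpolation become crucial. The hypothesis $m>N/2$ plays no direct role in this line of argument but is retained because it is the regime relevant to the later applications of \eqref{eq:decay} in Subsection~\ref{subsec:long-time}, where inverse inequalities of the form $\|w_h\|_{L^\infty}\le Ch^{-N/2}\|w_h\|_{L^2}$ are invoked.
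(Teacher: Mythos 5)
Your argument has a circular dependency that creates a genuine gap. You invoke \cref{thm:semigroup} with $q=\infty$ to obtain the $L^\infty$-bound on $A_h^j e^{-tA_h}$ for all $t>0$; but in the paper, the whole purpose of the present lemma (it sits in Subsection~\ref{subsec:long-time}) is to \emph{extend} \cref{thm:semigroup} from $T\le 1$ to $T\ge 1$. At this point only the $T\le 1$ version of \cref{thm:semigroup} is available. Your half-interval iteration then fails for $t>1$: submultiplicativity only yields $\|e^{-tA_h}\|_{L^\infty\to L^\infty}\le (Ce^{-c})^{\lfloor t\rfloor}\,C$, which decays exponentially only if $Ce^{-c}<1$, and the constants in \cref{thm:semigroup} do not guarantee this uniformly in $h$. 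Passing through the inverse inequality $\|w_h\|_{L^\infty}\le Ch^{-N/2}\|w_h\|_{L^2}$ does not help either, since the $h^{-N/2}$ factor cannot be absorbed by $t^{-m}$ when $t$ and $h$ are independent parameters.

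This is exactly why the hypothesis $m>N/2$ cannot be dismissed as a free parameter, contrary to your closing remark; it is the engine of the proof. The paper never touches \cref{thm:semigroup} here. It first establishes the uniform-in-$h$ mapping property $\|A_h^{-1}f_h\|_{L^q(\Omega_h)}\le C\|f_h\|_{L^p(\Omega_h)}$ for $1<p<q\le\infty$ with $1/p-1/q<1/N$, using the elliptic $W^{1,r}$-error estimate \eqref{eq:err-ell}, the inverse inequality, the Sobolev embedding, and $L^p$-elliptic regularity. Iterating, $A_h^{-m}\colon L^2(\Omega_h)\to L^\infty(\Omega_h)$ is uniformly bounded precisely when $m>N/2$. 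Writing $A_h^s e^{-tA_h}=A_h^{-m}\bigl(A_h^{s+m}e^{-tA_h}\bigr)$ and invoking the $L^2$ spectral estimate with exponent $s+m$ (your Step~1 is correct and reusable) then yields \eqref{eq:decay} for all $t>0$ with no circularity. Your integer-to-real interpolation step is also sound; the defect lies in the $L^\infty$ endpoint, which must come from the elliptic mapping properties of $A_h^{-1}$ rather than from the discrete parabolic $L^\infty$-theory.
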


\begin{proof}
We show that
\begin{equation}
\| A_h^{-1} f_h \|_{L^q(\Omegah)} \le C \| f_h \|_{L^p(\Omegah)},
\quad \forall f_h \in V_h,
\label{eq:inv}
\end{equation}
for any $1 < p < q \le \infty$ with $1/p - 1/q < 1/N$, where $C$ is independent of $h$ and $f_h$.
Once we obtain \eqref{eq:inv}, the proof of \eqref{eq:decay} is similar to that of \cite[Lemma~3.3]{SchTW98}.

Fix $f_h \in V_h$ arbitrarily and let $\tilde{f}_h$ be the extension of $f_h$ which vanishes outside of $\Omegah$.
We consider the elliptic equation
\begin{equation}
\begin{cases}
Au = \tf_h, & \text{in } \Omega, \\
\partial_n u = 0, & \text{on } \partial\Omega
\end{cases}
\end{equation}
and its discrete problem
\begin{equation}
a_\Omegah(u_h,v_h) = (f_h, v_h)_\Omegah, \quad \forall v_h \in V_h,
\end{equation}
so that $u_h = A_h^{-1} f_h$.
Note that $u \in W^{2,r}(\Omega)$ for arbitrary $r \in (1,\infty)$.
Then, since $f_h$ can be viewed as an extension of $\tf_h$, we have
\begin{equation}
\| u_h - P_h \tu \|_{W^{1,r}(\Omega_h)} \le C h \| u \|_{W^{2,r}(\Omega)}
\label{eq:err-ell}
\end{equation}
for $r \in [2, \infty]$.
Indeed, \eqref{eq:err-ell} is proved for $r=2$ in \cite[Theorem~3.1]{BarE88} and for $r=\infty$ in \cite[Theorem~3.1]{KasK17}.
Thus, \eqref{eq:err-ell} for general $r \in [2,\infty]$ is derived by interpolation (cf.~\cite{CalM83}).

Now, let $1 < p < q \le \infty$ satisfy $1/p - 1/q < 1/N$.
Then, from the Sobolev embedding $W^{1,p}(\Omega) \hookrightarrow W^{2,q}(\Omega)$, the inverse inequality, the error estimate \eqref{eq:err-ell}, and the elliptic regularity $\| u \|_{W^{2,p}(\Omega)} \le C \|Au\|_{L^p(\Omega)}$, we have
\begin{align}
\| u_h \|_{L^q(\Omegah)}
&\le \| u_h - P_h \tu \|_{W^{1,q}(\Omegah)} + \| P_h \tu \|_{W^{1,q}(\Omegah)} \\
&\le C h^{-N\left( \frac{1}{p} - \frac{1}{q} \right)} \| u_h - P_h \tu \|_{W^{1,p}(\Omegah)}
+ C \| u \|_{W^{2,p}(\Omega)} \\
&\le C \left( h^{1-N\left( \frac{1}{p} - \frac{1}{q} \right)} + 1 \right) \| u \|_{W^{2,p}(\Omega)} \\
&\le C \| Au \|_{L^p(\Omega)} \le C \| f_h \|_{L^p(\Omegah)},
\end{align}
which yields \eqref{eq:inv}.
Hence we can complete the proof.
\end{proof}

\begin{lem}
Assume that \cref{thm:main,thm:semigroup,thm:dmr} hold for $T \le 1$.
Then, they also hold for $T \ge 1$.
\end{lem}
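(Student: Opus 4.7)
My plan is to stitch the $T\le 1$ versions of the three theorems together on consecutive unit-length time intervals, leveraging the uniform-in-$h$ exponential decay of the discrete semigroup \eqref{eq:decay} that was just established. Because $A=-\Delta+1$ has the positive zero-order term, both the continuous and the discrete heat semigroups have a genuine spectral gap; this is what allows the resulting geometric sums over intervals to converge to $T$-independent constants.

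I would first promote \eqref{eq:decay}, which is an $L^\infty$ statement, to $L^q$-stability of $e^{-tA_h}$ for every $q\in[1,\infty]$; together with the $T\le 1$ stability for small $t$, this proves \cref{thm:semigroup} for all $T\ge 1$ (the bound for $\partial_t u_h=-A_h u_h$ comes from \eqref{eq:decay} with $s=1$). The transfer $L^\infty\to L^q$ goes by duality: since $A_h$ is $L^2$-self-adjoint on $V_h$ and $P_h$ is $L^\infty$-stable by \cref{lem:approx}(i), the identity
\[
(e^{-tA_h}u_h,v)_{\Omegah}=(u_h,e^{-tA_h}P_hv)_{\Omegah},\quad v\in L^\infty(\Omegah),
\]
converts the $L^\infty$-decay of \eqref{eq:decay} into an $L^1$-decay with the same rate, after which Riesz--Thorin interpolation covers $q\in(1,\infty)$.

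For \cref{thm:main} with $T\ge 1$, I would apply the $T\le 1$ version on the shifted interval $[T-1,T]$ with $\tu(T-1)$ and $u_h(T-1)$ as new initial data. The resulting bound features $\|\tu(T-1)-u_h(T-1)\|_{L^\infty(\Omegah)}$, which I split by linearity of the error equation into an initial-data contribution $\eta_1$ (with $f,g\equiv 0$) and a right-hand-side contribution $\eta_2$ (with zero initial data). For $\eta_1$, the exponential $L^\infty$-decay of both the continuous semigroup on the smooth domain and of $e^{-tA_h}$ yields $\|\eta_1(T-1)\|_{L^\infty}\le C e^{-c(T-1)}\|\tu_0-u_{h,0}\|_{L^\infty}$. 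For $\eta_2$ I would iterate the $T\le 1$ estimate on the unit intervals $[k,k+1]$ and sum using the exponential decay as the geometric factor. \cref{thm:dmr} is handled analogously by partitioning $(0,T)$ into unit subintervals, applying the $T\le 1$ maximal regularity on each, and absorbing the Duhamel contribution from $u_h(k)$ via the newly-available $L^q$-decay; the sum over $k$ is again a convergent geometric series.

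The main obstacle is making the iteration for $\eta_2$ clean: on each interval $[k,k+1]$ the approximation infima are obviously dominated by their $[0,T]$ counterparts, but the ``old'' error $\|\eta_2(k)\|_{L^\infty}$ picked up at each step must be dominated by a geometric sequence with ratio strictly less than one, so that the sum is uniformly bounded in $T$. The strict positivity of the decay rate $c$ in the extended \cref{thm:semigroup} (inherited from the $+1$ in $A$) is what makes this work.
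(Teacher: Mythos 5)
Your plan is sound in outline but departs from the paper's proof at two of the three theorems, and the part you flag yourself as the ``main obstacle'' is exactly where the details as written do not quite close. For \cref{thm:semigroup} the paper interpolates the $L^\infty$-decay of \eqref{eq:decay} against the $L^2$-decay (the latter from the spectral theorem for the uniformly positive self-adjoint $A_h$ on $V_h$), whereas you dualize $L^\infty$ to $L^1$ via the self-adjointness of $A_h$ and the $L^\infty$-stability of $P_h$, then interpolate; these routes are essentially equivalent. For \cref{thm:dmr} the paper simply cites the abstract transfer theorem \cite[Theorem~2.4]{Dor93}, which says that an analytic semigroup with uniform exponential decay has maximal regularity with $T$-independent constants once it has it on a single finite interval; your interval-splitting plus geometric series is in effect a proof of that theorem by hand, so nothing is lost beyond extra work. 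The substantial divergence is in \cref{thm:main}: the paper follows \cite[Lemma~3.4]{SchTW98} and, for $T\ge 1$, compares $u_h(T)$ to the discrete \emph{elliptic} (stationary) approximation of $u(T)$, controls that comparison by the pointwise elliptic error estimate of \cite[Theorem~3.1]{KasK17}, and propagates the remaining $V_h$-valued difference forward by the exponentially decaying discrete semigroup. This elliptic-proxy argument is more direct because it does not require any per-interval contraction argument.

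On your execution of \cref{thm:main} there are two genuine gaps. First, the bound $\|\eta_1(T-1)\|_{L^\infty}\le C e^{-c(T-1)}\|\tilde u_0-u_{h,0}\|_{L^\infty}$ does not follow from the separate decay of the continuous and discrete semigroups; the triangle inequality only gives $\|\eta_1(T-1)\|_{L^\infty}\le C e^{-c(T-1)}\bigl(\|u_0\|_{L^\infty}+\|u_{h,0}\|_{L^\infty}\bigr)$. To obtain something of the form required by \eqref{eq:best} you must insert $e^{-tA_h}P_h\tilde u_0$ and split $\eta_1=\bigl(\widetilde{e^{-tA}u_0}-e^{-tA_h}P_h\tilde u_0\bigr)+e^{-tA_h}\bigl(P_h\tilde u_0-u_{h,0}\bigr)$, apply decay to the second piece, and absorb the first piece (a homogeneous-problem discretization error) into the $h^2|\log h|\,\|u\|_{L^\infty(0,T;W^{2,\infty})}$ term. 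Second, for $\eta_2$ it is not enough to ``iterate the $T\le 1$ estimate on $[k,k+1]$'': that yields $\|\eta_2(k+1)\|\le C\|\eta_2(k)\|+B$ with the theorem's constant $C\ge 1$, which is not a contraction. The geometric factor you need must instead come from the discrete Duhamel structure, where $\eta_2(k+1)$ is $e^{-A_h}$ applied to (essentially) $\eta_2(k)$ plus a one-interval error, and even then one may have $Ce^{-c}\ge 1$ on unit intervals, forcing you to iterate on subintervals of a fixed length $L$ large enough that $Ce^{-cL}<1$. These are repairable, but as written the iteration does not yet close; the paper's elliptic-proxy route avoids the issue entirely.
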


\begin{proof}
Assume that \cref{thm:semigroup} holds for $T \le 1$.
Then, for $p = \infty$, we can extend \cref{thm:semigroup} to the case $T > 1$ together with \eqref{eq:decay}.
Moreover, since $A_h$ is symmetric and positive definite in $L^2(\Omegah)$ uniformly in $h$, 
we can obtain \cref{thm:semigroup} for $p=2$ and $T \ge 1$ by the spectral decomposition.
Therefore, the estimate \eqref{eq:semigroup} for general $p$ is derived from the Riesz-Thorin theorem and the symmetry.

Consequently, the semigroup $e^{-tA_h}$ is analytic and decays exponentially on $L^q(\Omegah)$ for any $q \in (1,\infty)$.
Thus, if \cref{thm:dmr} holds for $T \le 1$, we can show that it holds for any $T > 0$ by a general theory on maximal regularity
 (cf.~\cite[Theorem~2.4]{Dor93}).
 
Also, \cref{thm:main} for $T>1$ follows from \cref{thm:semigroup} and the $L^\infty$-error estimates for stationary problems that is proved in \cite[Theorem~3.1]{KasK17}.
We can proceed the same argument as in \cite[Lemma~3.4]{SchTW98} by replacing $\Omega$ by $\Omegah$, and thus we omit it.
\end{proof}

\section{Local energy error estimates}
\label{sec:local}

In this section, we show \cref{lem:local-F}.
As in \cite{SchTW98}, we derive the result from the local energy error estimates.

\begin{lem}[Local energy error estimate]
\label{lem:local}
Assume that $T \le 1$ and that $\cT_h$ is shape-regular and quasi-uniform.
Let $D \subset \Omegah$, $I = [t_0,t_1] \subset [0,T]$, $Q = D \times I$,
$D_d = \{ x \in \Omegah \mid \dist(x,D) < d \}$,
$I_d = [t_0 - d^2, t_1+d^2] \cap [0,T]$, and $Q_d = D_d \times I_d$ for $d \in (h, \diam \Omega)$.
Assume that $z \in C^0([0,T]; W^{k+1,\infty}(\Omega))$ and $z_h \in C^0([0,T]; V_h)$ satisfy
\begin{equation}
z_t + A z = 0, \text{ in } Q_T, \quad \partial_n z = 0, \text{ on } \partial\Omega \times (0,T),
\end{equation}
and
\begin{equation}
(z_{h,t}, \chi)_{\Omega_h} + a_{\Omega_h}(z_h, \chi) = 0, \quad \forall \chi \in V_h,
\end{equation}
respectively.
Finally, let $e = z_h - \tz$ and $\zeta = \tz - I_h \tz$.

Then, 
there exist $C_0>0$, $C>0$, and $c>0$ independently of $h$, $d$, $D$, and $I$ 
such that $d \ge ch$ implies, for arbitrary $\theta>0$,
\begin{multline}
\theta \trplnorm{e_t}{Q} + d^{-1} \trplnorm{e}{1,Q} + \theta \lambda_d \| e(T) \|_{1,D} \\
\le
  C_0 \theta 
  \left( \theta \trplnorm{e_t}{Q_{d}}
+ d^{-1} \trplnorm{e}{1,Q_{d}}
+ \theta \lambda_d \| e(T) \|_{1,D_{d}}\right)
+ C d^{-2} \trplnorm{e}{Q_{d}} \\
+ C \left( \kappa_d I_{D_d} + X_{Q_{d}} + H_{Q_{d}} + G_{Q_{d}} \right),
\label{eq:local}
\end{multline}
where 
\begin{equation}
\kappa_d = \begin{cases}
1, & t_0 \le d^2, \\ 0, & t_0 > d^2,
\end{cases}
\qquad
\lambda_d = \begin{cases}
1, & t_1+d^2 \ge T, \\ 0, & t_1+d^2 < T,
\end{cases}
\label{eq:kappa-lambda}
\end{equation}
and
\begin{align}
I_{D'} &:= \| e(0) \|_{1, D'} + d^{-1} \| e(0) \|_{D'}, \\
X_{Q'} &:= d \trplnorm{\zeta_t}{1,Q'} + \trplnorm{\zeta_t}{Q'}
+ d^{-1} \trplnorm{\zeta}{1,Q'} + d^{-2} \trplnorm{\zeta}{Q'}, \\
H_{Q'} &:= C_*^{-1/2} \left( \trplnorm{e_t}{Q'} + d^{-1} \trplnorm{e}{1,Q'} \right) , \\
G_{Q'} &:= 
\begin{multlined}[t]
hd^{\frac{3}{2}} \trplnorm{\tz_{tt} + A\tz_t}{L_T(\varepsilon) \cap Q'}
+ hd^{-\frac{3}{2}} \trplnorm{\tz_t + A\tz}{L_T(\varepsilon) \cap Q'} \\
+ d^{\frac{3}{2}} \trplnorm{\partial_{n_h} \tz_t}{\Sigma_{h,T} \cap Q'}
+ d^{-\frac{3}{2}} \trplnorm{\partial_{n_h} \tz}{\Sigma_{h,T} \cap Q'}
\end{multlined}
\end{align}
for $D' \subset \Omegah$ and $Q' \subset Q_{h,T}$.
\end{lem}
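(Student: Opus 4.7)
The plan is to test the error equation, obtained from the asymptotic Galerkin orthogonality of \cref{lem:ago} applied with $\varphi = 0$ and $\psi = 0$,
\begin{equation*}
(e_t, v_h)_{\Omegah} + a_{\Omegah}(e, v_h) = -(\tz_t + A\tz, v_h)_{\OmghOmg} - (\partial_{\nh}\tz, v_h)_{\dOmegah}, \qquad \forall v_h \in V_h,
\end{equation*}
against two carefully chosen discrete test functions, and to derive localized energy identities using a smooth cutoff supported in $Q_d$. The two boundary-skin terms on the right-hand side will generate exactly the contributions gathered in $G_{Q_d}$.

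First I would introduce a tensor-product cutoff $\omega(x,t) = \omega_S(x)\omega_T(t)$ with $\omega_S \in C^\infty_c(\bR^N)$ equal to $1$ on $D$ and supported in $D_d$, and $\omega_T \in C^\infty(\bR)$ equal to $1$ on $I$ with support in $I_d \cap [0,T]$, satisfying $|\nabla^l \omega_S| \le C d^{-l}$ and $|\partial_t^l \omega_T| \le C d^{-2l}$. The parameters $\kappa_d$ and $\lambda_d$ precisely record whether $\omega_T$ is forced to reach the boundary of $[0,T]$; when it is, integration by parts in time yields the initial datum $I_{D_d}$ or the endpoint trace $\|e(T)\|_{1,D}$. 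The two tests I would perform are against $v_h = P_h(\omega^2 e)$, giving the spatial $H^1$-energy estimate, and against $v_h = P_h(\omega^2 e_t)$, giving control of the time derivative and of the trace at $t = T$. Writing $e = (z_h - I_h \tz) - \zeta$ separates the discrete part from the interpolation error, the latter contributing the quantity $X_{Q_d}$ through \cref{lem:approx}. Standard integration by parts together with Young's inequality applied to commutator terms (where derivatives fall on $\omega$, picking up the weight $d^{-1}$) produces the left-hand side of \eqref{eq:local}, the term $d^{-2} \trplnorm{e}{Q_d}^2$, and local energy norms on $Q_d$ carrying the small prefactor $C_0\theta$.

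The principal difficulty is controlling the two boundary-skin contributions when paired with the test functions $P_h(\omega^2 e)$ and $P_h(\omega^2 e_t)$. By Cauchy--Schwarz with an appropriate weight split, schematically
\begin{equation*}
|(\tz_t + A\tz, v_h)_{\OmghOmg}|
\le \bigl(h d^{-3/2} \trplnorm{\tz_t + A\tz}{L_T(\varepsilon) \cap Q_d}\bigr) \cdot \bigl(h^{-1} d^{3/2} \|v_h\|_{L^2(\OmghOmg)}\bigr),
\end{equation*}
and analogously for the surface contribution with weights $d^{\pm 3/2}$, the left factor gives exactly the four terms in $G_{Q_d}$. The right factor is dominated using the tubular-neighborhood estimates \eqref{eq:tube-3}--\eqref{eq:tube-4} together with a trace inequality; it is either absorbed into the $C_0\theta$-local-energy pile on the right-hand side, or, after an inverse trace inequality on the discrete strip of width $O(h)$ adjacent to $\dOmegah$, reduced to a local energy norm weighted by $\sqrt{h/d} \le C_*^{-1/2}$, producing the term $H_{Q_d}$. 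The key strategic point, emphasized in \cref{sec:outline}, is that neither the $C_0\theta$-local-energy terms nor $H_{Q_d}$ are absorbed at this stage: they are left on the right, and the kick-back is postponed to the dyadic summation of \cref{lem:green}, where $\theta$ is chosen small and $C_*$ large.
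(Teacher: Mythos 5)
Your overall architecture is right — a tensor-product cutoff supported in $Q_d$, a first test giving the $L^2$--$H^1$ bound, a second test giving $\trplnorm{e_t}{Q}$ and the trace $\|e(T)\|_{1,D}$, the boundary-skin integrals producing $G_{Q_d}$, and the kick-back postponed to the dyadic summation of \cref{lem:green}. However, there are three concrete gaps relative to what makes the argument actually close.

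First, the choice $v_h = P_h(\omega^2 e)$ is problematic: $P_h$ is the \emph{global} $L^2(\Omegah)$-projection, so its support is not contained in $D_{2d}$, and none of the commutator or skin terms localize. The paper instead derives the energy identity for $\omega e$ \emph{before} touching the Galerkin relation, splits $\omega^2 e = \omega^2\zeta_h - \omega^2\zeta$ in the main pairing (with $\zeta_h = z_h - I_h\tz \in V_h$), and only then applies \cref{lem:ago} with the discrete test $\chi = I_h(\omega^2\zeta_h)$. Because $\zeta_h$ is a piecewise polynomial, $I_h$ applied to $\omega^2\zeta_h$ is local and enjoys the super-approximation estimates \eqref{eq:sc1}--\eqref{eq:sc2}. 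Your plan tests $\omega^2 e$, which is \emph{not} a piecewise polynomial, so super-approximation does not apply, and replacing $I_h$ by $P_h$ loses locality. You acknowledge the splitting $e = \zeta_h - \zeta$ a sentence later, but your actual test function does not exploit it.

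Second, the origin of $H_{Q_d}$ is misattributed. In the paper, $H$ does \emph{not} come from an inverse trace inequality applied to the skin terms; the skin terms (after \eqref{eq:tube-4}, \eqref{eq:nabla}, \eqref{eq:trace}, and Young) contribute only $G$ plus terms already present as $\theta_1^2\|\nabla e\|^2$ and $d^{-2}\|e\|^2$ (see \eqref{eq:J13}--\eqref{eq:J14}). The $C_*^{-1/2}$ factor in $H$ arises from the super-approximation remainders $J_{1,1}, J_{1,2}$, which carry factors $h^2$ and $hd^{-1}$, rewritten as $(h/d)^2 d^2$ and $(h/d)$ and then bounded via $h/d \le C_*^{-1}$.

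Third, the second test is not $P_h(\omega^2 e_t)$; it is the time derivative $[I_h(\omega^2\zeta_h)]_t$, chosen precisely so that the skin integrals $\cK_3, \cK_4$ can be integrated by parts in time. That integration by parts moves $\partial_t$ off the test function onto $\tz$, producing the terms $\trplnorm{\tz_{tt}+A\tz_t}{L_T(\varepsilon)\cap Q'}$ and $\trplnorm{\partial_{n_h}\tz_t}{\Sigma_{h,T}\cap Q'}$ in $G$, and simultaneously produces the endpoint trace $\|e(T)\|_{1,D}$ (with the indicator $\lambda_d$) via the time-trace inequality \eqref{eq:trace-time}. Without this integration by parts, the time derivative of the test function sits inside the skin pairing and cannot be controlled; your plan does not account for this step, yet it is where half the structure of $G_{Q_d}$ and the whole $\lambda_d\|e(T)\|_{1,D}$ term come from.
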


We put aside the proof for now and we here show \cref{lem:local-F}.

\begin{proof}[Proof of \cref{lem:local-F}.]
We substitute $z = \Gamma$, $z_h = \Gamma_h$, $d = d_j$, and
\begin{equation}
Q = \Omega_{h,j} \times [0,d_j^2]
\quad \text{or} \quad
Q = \{ x \in \Omegah \mid |x-x_0| < d_j \} \times [d_j^2,4d_j^2]
\end{equation}
into \eqref{eq:local}.
Then, we have
\begin{multline}
\theta \trplnorm{F_t}{Q_{h,j}} + d_j^{-1} \trplnorm{F}{1,Q_{h,j}} + \theta \lambda_j \| F(T) \|_{1,D_{h,j}} \\
\le
  C_0 \theta
  \left( \theta \trplnorm{F_t}{Q'_{h,j}}
+ d_j^{-1} \trplnorm{F}{1,Q'_{h,j}}
+ \theta \lambda'_j \| F(T) \|_{1,D'_{h,j}}\right)
+ C d_j^{-2} \trplnorm{F}{Q'_{h,j}} \\
+ C_1 C_*^{-1/2} \left( \trplnorm{F_t}{Q'_{h,j}} + d_j^{-1} \trplnorm{F}{1,Q'_{h,j}} \right)
+ C \left( I_j + X_j + G_j \right),
\end{multline}
for arbitrary $\theta>0$,
where $I_j$, $X_j$, and $G_j$ are defined in the statement of \cref{lem:local-F}.
Here, we denote the constant before $C_*^{-1/2}$ by $C_1$, which is induced by the term $H_{Q_d}$ in \cref{lem:local} and thus independent of $h$ and $C_*$.
Making $C_*^{-1}$ small enough so that $C_1 C_*^{-1/2} \le C_0 \theta^2$ (recall that $\theta$ is chosen depending only on $C_0$ and $N$ in the proof of \cref{lem:green}) and replacing $2C_0$ by $C_0$, we obtain the desired estimate \eqref{eq:local-F}.
\end{proof}

Now we give the proof of \cref{lem:local}. 
The outline is based on that of \cite[Lemma~6.1]{SchTW98} and \cite[Lemma~4.1]{ThoW00}.
In these proofs, the strong super-approximation property is introduced and proved for Lagrangian finite element spaces.
However, we have succeeded in avoiding these arguments.
Thus we give an alternative proof.

\begin{proof}[Proof of \cref{lem:local}]
We first introduce a cut-off function $\omega$ according to \cite{SchTW98}.
Let $\omega_1 \in C^\infty(\Omega_h)$ satisfy
\begin{equation}
0 \le \omega_1 \le 1, \quad \omega_1|_D \equiv 1, \quad \omega_1|_{\Omega_h \setminus D_d} \equiv 0,
\quad |D^l \omega_1| \le Cd^{-l}
\end{equation}
for $l \in \bN_0$.
We can find such $\omega_1$ if $d \ge 2h$ since $\cT_h$ is quasi-uniform.
We also choose $\omega_2 \in C^1[0,T]$ that satisfies 
\begin{equation}
0 \le \omega_2 \le 1, \quad \omega_2|_{\hat{I}} \equiv 1,  
\quad \supp \omega_2 = I_d,
\quad |\omega_2'| \le Cd^{-2},
\end{equation}
where $\hat{I} = [t_1,T]$ if $t_2+d^2 \ge T$ and $\hat{I} = I$ otherwise.  
We finally set $\omega(x,t) = \omega_1(x) \omega_2(t)$ for $(x,t) \in Q_{h,T}$.

\textit{Step 1.} We first consider the local $L^2$-$H^1$-estimate. 
Let $\zeta_h = z_h - I_h z = e + \zeta$.
Then, by an elementary calculation, we have
\begin{equation}
\frac{1}{2} \frac{d}{dt} \| \omega e \|_\Omegah^2 + \| \omega \nabla e \|_\Omegah^2 + \| \omega e \|_\Omegah^2
= J_1 + J_2,
\end{equation}
where
\begin{align}
J_1 &= (e_t, \omega^2 \zeta_h)_\Omegah + a_\Omegah(e, \omega^2 \zeta_h), \\
J_2 &= -(e_t, \omega^2 \zeta)_\Omegah + (e, \omega \omega_t e)_\Omegah
- a_\Omegah(e, \omega^2 \zeta_h)
+ \| \omega \nabla e \|_\Omegah^2 + \| \omega e \|_\Omegah^2.
\end{align}
We can calculate $J_2$ as
\begin{equation}
J_2 = -(e_t, \omega^2 \zeta)_\Omegah + (e, \omega \omega_t e)_\Omegah
- 2(\nabla e, \omega (\nabla \omega) \zeta_h)_\Omegah - (\nabla e, \omega^2 \nabla \zeta)_\Omegah - (e, \omega^2 \zeta)_\Omegah,
\end{equation}
and thus we have
\begin{equation}
|J_2| \le \theta^2 d^2 \| \omega e_t \|_\Omegah^2
+ \frac{1}{2} \left( \| \omega\nabla e\|_\Omegah^2 + \| \omega e \|_\Omegah^2 \right)
+ C \left( \| \nabla \zeta \|_{D_d}^2 + d^{-2} \| \zeta \|_{D_d}^2 \right)
+ C d^{-2} \| e \|_{D_d}^2
\label{eq:J2}
\end{equation}
for arbitrary $\theta > 0$, since $\zeta_h = e + \zeta$.
To address $J_1$, we recall the asymptotic Galerkin orthogonality \eqref{eq:ago} and we have
\begin{align}
J_1 &= (e_t, \omega^2 \zeta_h - \chi)_\Omegah + a_\Omegah(e, \omega^2\zeta_h - \chi)
- (\tz_t + A \tz, \chi)_\OmghOmg - (\partial_\nh \tz, \chi)_{\partial\Omegah} \\
&=: \sum_{i=1}^{4} J_{1,i}
\end{align}
for arbitrary $\chi \in V_h$.
We choose $\chi = I_h(\omega^2 \zeta_h)$ so that $\supp \chi \subset D_{2d}$ if $d \ge h$.
We remark that the super-approximation type estimates
\begin{align}
\| \omega^2 \zeta_h - \chi \|_{D_{2d}}
&\le C h d^{-1} \| \zeta_h \|_{D_{2d}},
\label{eq:sc1}\\
\| \nabla (\omega^2 \zeta_h - \chi) \|_{D_{2d}}
&\le C \left( hd^{-2} \| \zeta_h \|_{D_{2d}} + hd^{-1} \| \nabla \zeta_h \|_{D_{2d}} \right)
\label{eq:sc2}
\end{align}
hold (see \cite[page~386]{ThoW00}).
Thus, $J_{1,1}$ and $J_{1,2}$ can be addressed as in \cite{SchTW98,ThoW00} and we have
\begin{equation}
|J_{1,1}| + |J_{1,2}|
\le C \left( \| \zeta \|_{1,D_{2d}}^2 + d^{-2} \| \zeta \|_{D_{2d}}^2 \right)
+ C \left( h^2 \| e_t \|_{D_{2d}}^2 + h d^{-1} \| e \|_{1,D_{2d}}^2 \right)
+ C d^{-2} \| e \|_{D_{2d}}^2.
\label{eq:J1112}
\end{equation}

In order to address $J_{1,3}$ and $J_{1,4}$, which are additional terms induced by the boundary layer, we state the following stability estimates with scaling:
\begin{align}
\| I_h(\omega^2 \zeta_h) \|_{D_{2d}} 
&\le C \| \zeta_h \|_{D_{2d}} ,
\label{eq:L2} \\
\| \nabla I_h(\omega^2 \zeta_h) \|_{D_{2d}} 
&\le C d^{-1} \| \zeta_h \|_{D_{2d}} + C \| \nabla \zeta_h \|_{D_{2d}},
\label{eq:nabla} \\
\| I_h( \omega^2 \zeta_h ) \|_{\dOmegah}
&\le C d^{1/2} \left( d^{-1} \| \zeta_h \|_{D_{2d}} + \| \nabla \zeta_h \|_{D_{2d}}  \right)
\label{eq:trace} 
\end{align}
The first two inequalities are derived from the super-approximation estimates \eqref{eq:sc1} and \eqref{eq:sc2}.
Indeed, \eqref{eq:sc2} yields
\begin{align}
\| \nabla I_h(\omega^2 \zeta_h) \|_{D_{2d}} 
&\le C \left( hd^{-2} \| \zeta_h \|_{D_{2d}} + hd^{-1} \| \nabla \zeta_h \|_{D_{2d}} \right)
 + \| \nabla (\omega^2 \zeta_h) \|_{D_{2d}} \\
&\le C d^{-1} \| \zeta_h \|_{D_{2d}} + C \| \nabla \zeta_h \|_{D_{2d}}
\end{align}
since $hd^{-1} \le 1$, which gives \eqref{eq:nabla}.
One can see that \eqref{eq:L2} is obtained more easily.
The third estimate \eqref{eq:trace} is derived from the trace inequality
\begin{equation}
\| I_h( \omega^2 \zeta_h ) \|_{\dOmegah}
\le C \| I_h(\omega^2 \zeta_h )\|_\Omegah^{1/2} \| I_h(\omega^2 \zeta_h )\|_{1,\Omegah}^{1/2}
\end{equation}
together with \eqref{eq:L2} and \eqref{eq:nabla}.
Here, the constant of the trace inequality may depend on $\Omegah$.
However, it can be bounded uniformly in $h$ since $\cT_h$ is quasi-uniform.

Now, we address $J_{1,3}$.
The inequality \eqref{eq:tube-4} yields
\begin{equation}
\| I_h(\omega^2 \zeta_h) \|_\OmghOmg
\le C \left( h \| I_h(\omega^2 \zeta_h) \|_\dOmegah + h^2 \| \nabla I_h(\omega^2 \zeta_h) \|_{D_{2d}} \right).
\end{equation}
Substituting \eqref{eq:nabla} and \eqref{eq:trace}, we have
\begin{equation}
\| I_h(\omega^2 \zeta_h) \|_\OmghOmg
\le C hd^{1/2} \left( d^{-1} \| \zeta_h \|_{D_{2d}} + \| \nabla \zeta_h \|_{D_{2d}}  \right),
\label{eq:2}
\end{equation}
which implies
\begin{equation}
|J_{1,3}| \le C h^2 d \| \tz_t + A \tz \|_{T(\varepsilon) \cap D_{2d}}^2 
+ C d^{-2} \| \zeta \|_{D_{2d}}^2 + C \| \nabla \zeta \|_{D_{2d}}^2 
+ C d^{-2} \| e \|_{D_{2d}}^2 + \theta_1^2 \| \nabla e \|_{D_{2d}}^2
\label{eq:J13}
\end{equation}
for arbitrary $\theta_1 > 0$ by the Young inequality.
Here, and hereafter, some constants may depend on $\theta_1$ and the dependency should be clarified.
However, it is dropped since we later choose $\theta_1$ independently of $h$, $d$, $z$, and $T$.
Moreover, we assume that $\theta_1$ is small and thus we will drop the coefficients of $\theta_1$,
since we can make $\theta_1$ smaller if necessary.

The estimate \eqref{eq:trace} also gives the bound for $J_{1,4}$.
Indeed, we have
\begin{align}
|J_{1,4}|
&\le \| \partial_\nh \tz \|_{\dOmegah \cap D_{2d}} 
\times C d^{1/2} \left( d^{-1} \| \zeta_h \|_{D_{2d}} + \| \nabla \zeta_h \|_{D_{2d}}  \right) \\
&\le C d \| \partial_\nh \tz \|_{\dOmegah \cap D_{2d}}^2 
+ C d^{-2} \| \zeta \|_{D_{2d}}^2 + C \| \nabla \zeta \|_{D_{2d}}^2 
+ C d^{-2} \| e \|_{D_{2d}}^2 + \theta_1^2 \| \nabla e \|_{D_{2d}}^2
\label{eq:J14}
\end{align}
for the same $\theta_1$ as above.

Therefore, from equations \eqref{eq:J2}, \eqref{eq:J1112}, \eqref{eq:J13}, and \eqref{eq:J14}, we can derive  
\begin{multline}
\frac{d}{dt} \| \omega e \|_\Omegah^2 + \| \omega \nabla e \|_\Omegah^2 + \| \omega e \|_\Omegah^2 \\
\le  \theta^2 d^2 \| \omega e_t \|_{D_{d}}^2 + \theta_1^2 \| \nabla e \|_{D_{2d}}^2
+ C d^{-2} \| e \|_{D_{2d}}^2  
+C d^2 \left( \bar{H}_{D_{2d}} + \bar{X}^{(1)}_{D_{2d}} + \bar{G}^{(1)}_{D_{2d}} \right),
\label{eq:energy-1}
\end{multline} 
where
\begin{align}
\bar{H}_{D_{2d}} &:= C_*^{-1} \left( \| e_t \|_{D_{2d}}^2 + d^{-2} \| e \|_{1,D_{2d}}^2 \right),
\label{eq:Hbar} \\
\bar{X}^{(1)}_{D_{2d}} &:= d^{-2} \| \zeta \|_{1,D_{2d}}^2 + d^{-4} \| \zeta \|_{D_{2d}}^2, \\
\bar{G}^{(1)}_{D_{2d}} &:= h^2 d^{-3} \| \tz_t + A \tz \|_{T(\varepsilon) \cap D_{2d}}^2 + d^{-3} \| \partial_\nh \tz \|_{\dOmegah \cap D_{2d}}^2.
\end{align}
Here we used the fact that $hd^{-1} \le C_*^{-1} \le 1$. 
Integrating \eqref{eq:energy-1} over $I_d$, multiplying it by $d^{-2}$, and taking the square roots, we have 
\begin{multline}
d^{-1} \trplnorm{e}{1,Q}
\le
\kappa_d I_{D_d}
+ \theta \trplnorm{\omega e_t}{Q_{h,T}}
+ \theta_1 d^{-1} \trplnorm{e}{1,Q_{2d}}
+ C d^{-2} \trplnorm{e}{Q_{2d}} \\
+ C \left( X_{Q_{2d}} 
+ H_{Q_{2d}} 
+ G_{Q_{2d}} \right). 
\label{eq:energy-2}
\end{multline}

\textit{Step 2.} We next consider the local $H^1$-$L^2$-estimate. 
Note that the strategy of this step is different from the literature due to the effect of the boundary layer.
From a basic calculation, we have 
\begin{equation}
\| \omega e_t \|_\Omegah^2 + \frac{1}{2} \frac{d}{dt} \left( \| \omega \nabla e \|_\Omegah^2 + \| \omega e \|_\Omegah^2 \right)
= K_1 + K_2,
\end{equation}
where
\begin{equation}
K_1 = (e_t, (\omega^2 \zeta_h)_t)_\Omegah + a_\Omegah(e,(\omega^2 \zeta_h)_t)
\end{equation}
and
\begin{multline}
K_2 =  -(e_t, \omega^2 \zeta_t)_\Omegah 
- (\nabla e, \omega^2 \nabla \zeta_t)_\Omegah - (e,\omega^2 \zeta_t)_\Omegah 
-(e, 2 \omega\omega_t \zeta)_\Omegah 
\\
- (e_t, 2 \omega\omega_t \zeta_h )_\Omegah 
- (\nabla e, \nabla \partial_t (\omega^2) \zeta_h)_\Omegah + (\nabla e, 2 \omega\omega_t \nabla e)_\Omegah.
\end{multline}
The second term $K_2$ can be addressed by the Young inequality and we have
\begin{equation}
K_2 
\le \frac{1}{2} \| \omega e_t \|_\Omegah^2 + C \left( d^2 \| \nabla \zeta_t \|_{D_d}^2 + \| \zeta_t \|_{D_d}^2 + d^{-4} \| \zeta \|_{D_d} \right)
+ C d^{-2} \| \nabla e \|_{D_d}^2 + C d^{-4} \| e \|_{D_d}^2.
\label{eq:K2}
\end{equation}
As in the case of $J_1$, the asymptotic Galerkin orthogonality \eqref{eq:ago} gives
\begin{align}
K_1 &= (e_t, (\omega^2 \zeta_h)_t - \chi)_\Omegah + a_\Omegah(e, (\omega^2 \zeta_h)_t - \chi)
- (\tz_t + A \tz, \chi)_\OmghOmg - (\partial_\nh \tz, \chi)_{\partial\Omegah} \\
&=: \sum_{i=1}^4 K_{1,i}
\end{align}
for arbitrary $\chi \in V_h$.
We choose $\chi = I_h [(\omega^2 \zeta_h)_t] = [I_h(\omega^2 \zeta_h)]_t$.
Then, we need new super-approximation estimates for this $\chi$ as follows:
\begin{align}
\| (\omega^2 \zeta_h)_t - I_h[(\omega^2 \zeta_h)_t] \|_{D_{2d}}
&\le C h d^{-3} \| \zeta_h \|_{D_{2d}} + hd^{-1} \| \zeta_{h,t} \|_{D_{2d}} ,
\label{eq:sc3}\\
\| \nabla ( (\omega^2 \zeta_h)_t - I_h[(\omega^2 \zeta_h)_t] ) \|_{D_{2d}}
&\le C d^{-3} \| \zeta_h \|_{D_{2d}} + d^{-1} \| \zeta_{h,t} \|_{D_{2d}} 
\label{eq:sc5}
\end{align}
for all $\zeta_h \in C^1(0,T;V_h)$.
Here we show \eqref{eq:sc3} only since the derivation of \eqref{eq:sc5} is similar.
For each element $K \subset D_{2d}$, it is clear that
\begin{equation}
\| (\omega^2 \zeta_h)_t - I_h[(\omega^2 \zeta_h)_t] \|_K
\le C h^{N/2} h^{k+1} \| \nabla^{k+1} [(\omega^2 \zeta_h)_t] \|_{L^\infty(K)}.
\end{equation}
Expanding the right hand side and using the inverse inequalities, we have
\begin{multline}
\| \nabla^{k+1} [(\omega^2 \zeta_h)_t] \|_{L^\infty(K)}
\le C d^{-k-3} \| \zeta_h \|_{L^\infty(K)} + C(d^{-k-2} + h^{-k+1}d^{-3}) \| \nabla \zeta_h \|_{L^\infty(K)} \\
+ C(d^{-k-1} + h^{-1}d^{-k} + h^{-k}d^{-1}) \| \zeta_{h,t} \|_{L^\infty(K)}
\end{multline}
since $\nabla^{k+1} \zeta_h \equiv 0$. 
Thus, together with the inverse inequality again, we can obtain \eqref{eq:sc3}.

Let us go back to the estimate of $K_1$.
Using \eqref{eq:sc3} and \eqref{eq:sc5}, we can address $K_{1,1}$ and $K_{1,2}$ as
\begin{multline}
K_{1,1} + K_{1,2}
\le 
\theta_2^2 \| e_t \|_{D_{2d}}^2 + C d^{-2} \| e \|_{1,D_{2d}}^2 \\
+C \left( \| \zeta_t \|_{D_{2d}}^2 + d^{-2} \| \nabla \zeta \|_{D_{2d}} + d^{-4} \| \zeta \|_{D_{2d}} \right)
+ C hd^{-1} \| e_t \|_{D_{2d}}^2
\label{eq:K1112}
\end{multline}
for arbitrary $\theta_2 > 0$.
Here, we treat $\theta_2$ in the same manner as $\theta_1$ mentioned above.
In contrast to $J_{1,3}$ and $J_{1,4}$, we postpone treating $K_{1,3}$ and $K_{1,4}$.
Summarizing \eqref{eq:K2} and \eqref{eq:K1112}, and kicking-back the term involving $\omega e_t$, we obtain
\begin{multline}
\| \omega e_t \|_\Omegah^2 + \frac{d}{dt} \left( \| \omega \nabla e \|_\Omegah^2 + \| \omega e \|_\Omegah^2 \right)
\\
\le 
\theta_2^2 \| e_t \|_{D_{2d}}^2 + C d^{-2} \| \nabla e \|_{D_d}^2 + C d^{-4} \| e \|_{D_d}^2 \\
+ C \left( \bar{H}_{D_{2d}} + \bar{X}^{(2)}_{D_{2d}} \right)
+ K_{1,3} + K_{1,4},
\end{multline}
where
\begin{equation}
\bar{X}^{(2)}_{D_{2d}} := d^2 \| \nabla \zeta_t \|_{D_{2d}}^2 + \| \zeta_t \|_{D_{2d}}^2 + d^{-2} \| \nabla \zeta \|_{D_{2d}}^2 + d^{-4} \| \zeta \|_{D_{2d}}^2
\end{equation}
and $\bar{H}_{D_{2d}}$ is defined by \eqref{eq:Hbar}.
Integrating both sides over $I_d$, we have
\begin{multline}
\trplnorm{\omega e_t}{Q_d}^2 
+ \lambda_d \left( \| \omega_1 \nabla e(T) \|_{\Omegah}^2 + \| \omega_1 e(T) \|_{\Omegah}^2 \right) \\
\le \kappa_d I_{D_{2d}}^2
+ \theta_2^2 \trplnorm{e_t}{Q_{2d}}^2 
+ C d^{-2} \trplnorm{e}{1,Q_{2d}}^2 + C d^{-4} \trplnorm{e}{Q_{2d}}^2 \\
+ C \left( H_{Q_{2d}}^2 + X_{Q_{2d}}^2 \right)
+ \cK_3 + \cK_4,
\label{eq:3}
\end{multline}
where $\cK_{i} = \int_{I_d} K_{1,i} dt$ ($i=3,4$).

We address $\cK_3$ and $\cK_4$ by integration by parts.
Since $\tz(0)|_{T(\varepsilon)} \equiv 0$, we have
\begin{align}
\cK_3 
&= - \int_{I_d} (\tz_t + A\tz, [I_h(\omega^2 \zeta_h)]_t)_\OmghOmg dt \\
&= \int_{I_d} (\tz_{tt} + A\tz_t, I_h(\omega^2 \zeta_h))_\OmghOmg dt
- \lambda_d (\tz_t(T) + A\tz(T), I_h(\omega_1^2 \zeta_h(T)) )_\OmghOmg,
\end{align}
where $\lambda_d$ is defined by \eqref{eq:kappa-lambda}.
Recalling \eqref{eq:2}, we obtain
\begin{align}
\cK_3 &\le 
\begin{multlined}[t] 
\trplnorm{\tz_{tt} + A\tz_t}{L_T(\varepsilon) \cap Q_{2d}} 
\times C hd^{1/2} \left( d^{-1} \trplnorm{\zeta_h}{Q_{2d}} + \trplnorm{\nabla \zeta_h}{Q_{2d}} \right) \\
 + \lambda_d \| \tz_t(T) + A\tz(T) \|_{T(\varepsilon) \cap D_{2d}}
 \times C hd^{1/2} \left( d^{-1} \| \zeta_h(T) \|_{D_{2d}} + \| \nabla \zeta_h(T) \|_{D_{2d}} \right)
\end{multlined}
\\[1ex]
&\le 
\begin{multlined}[t] 
C h^2 d^3 \trplnorm{\tz_{tt} + A\tz_t}{L_T(\varepsilon) \cap Q_{2d}}^2 
+ C d^{-4} \trplnorm{\zeta_h}{Q_{2d}}^2
+ C d^{-2} \trplnorm{\zeta_h}{1,Q_{2d}}^2 \\
+ \lambda_d \left( 
  C h^2 d \| \tz_t(T) + A\tz(T) \|_{T(\varepsilon) \cap D_{2d}}^2
+ \theta_2^2 d^{-2} \| \zeta_h(T) \|_{D_{2d}}^2 
+ \theta_2^2 \| \nabla \zeta_h(T) \|_{D_{2d}}^2  \right)
\end{multlined}
\end{align}
for the same $\theta_2>0$ as above.
Here we used the fact that
\begin{equation}
\| I_h(\omega_1^2 \zeta_h(T)) \|_\OmghOmg
\le C hd^{1/2} \left( d^{-1} \| \zeta_h(T) \|_{D_{2d}} + \| \nabla \zeta_h(T) \|_{D_{2d}} \right),
\end{equation}
which is derived by letting $t=T$ in \eqref{eq:2}.
Moreover, since $[T-d^2, T] \subset I_d$ provided $\lambda_d = 1$, we obtain the trace inequality of the form
\begin{align}
\| \psi(T) \|_{D_{2d}} 
&\le C d^{-1} \trplnorm{\psi}{D_{2d} \times [T-d^2,T]} + C d \trplnorm{\psi_t}{D_{2d} \times [T-d^2,T]} \\
&\le C d^{-1} \trplnorm{\psi}{Q_{2d}} + C d \trplnorm{\psi_t}{Q_{2d}}
\label{eq:trace-time}
\end{align}
for any $\psi \in H^1(I_{2d}; L^2(D_{2d}))$.
Therefore, together with $\| \zeta_h \| \le \| \zeta \| + \| e \|$, 
we can merge several terms involving $\lambda_d$ and we obtain
\begin{equation}
\cK_3 \le
\theta_2^2 \lambda_d \| \nabla e(T) \|_{D_{2d}}^2 
+ \theta_2^2 \trplnorm{e_t}{Q_{2d}}^2
+ C d^{-2} \trplnorm{e}{1,Q_{2d}}^2 + Cd^{-4} \trplnorm{e}{Q_{2d}}^2 
+ C \left( X_{Q_{2d}}^2 + G_{Q_{2d}}^2 \right)
\label{eq:K3}
\end{equation}

We repeat this calculation. By integration by parts, we have
\begin{align}
\cK_4 
&= - \int_{I_d} (\partial_\nh \tz, [I_h(\omega^2 \zeta_h)]_t)_\dOmegah dt \\
&= \int_{I_d} (\partial_\nh \tz_t, I_h(\omega^2 \zeta_h) )_\dOmegah dt 
- \lambda_d (\partial_\nh \tz(T), I_h(\omega_1^2 \zeta_h(T)) )_\dOmegah
\end{align}
Using \eqref{eq:trace} (for general time $t$ and the specified time $t=T$), we have
\begin{align}
\cK_4 &
\le
\begin{multlined}[t]
\trplnorm{\partial_\nh \tz_t}{\Sigma_{h,T} \cap Q_{2d}} 
\times C d^{1/2} \left( d^{-1} \trplnorm{\zeta_h}{Q_{2d}} + \trplnorm{\nabla \zeta_h}{Q_{2d}} \right) \\
+ \lambda_d \| \partial_\nh \tz (T) \|_{\dOmegah \cap D_{2d}}
\times C d^{1/2} \left( d^{-1} \| \zeta_h(T) \|_{D_{2d}} + \| \nabla \zeta_h(T) \|_{D_{2d}} \right) 
\end{multlined} 
\\[1ex]
&\le 
\begin{multlined}[t]
C d^3 \trplnorm{\partial_\nh \tz_t}{\Sigma_{h,T} \cap Q_{2d}}^2 
+ C d^{-4} \trplnorm{\zeta_h}{Q_{2d}}^2 
+ C d^{-2} \trplnorm{\zeta_h}{1,Q_{2d}}^2 \\
+ \lambda_d \left( C d \| \partial_\nh \tz(T) \|_{\dOmegah \cap D_{2d}}^2 
+ \theta_2^2 d^{-2} \| \zeta_h(T) \|_{D_{2d}}^2
+ \theta_2^2 \| \nabla \zeta_h(T) \|_{D_{2d}}^2 \right).
\end{multlined}
\end{align}
Again using the trace inequality in time \eqref{eq:trace-time} and merging several terms, we obtain
\begin{equation}
\cK_4 \le
\theta_2^2 \lambda_d \| \nabla e(T) \|_{D_{2d}}^2 
+ \theta_2^2 \trplnorm{e_t}{Q_{2d}}^2
+ C d^{-2} \trplnorm{e}{1,Q_{2d}}^2 + Cd^{-4} \trplnorm{e}{Q_{2d}}^2 
+ C \left( X_{Q_{2d}}^2 + G_{Q_{2d}}^2 \right)
\label{eq:K4}
\end{equation}
Substituting \eqref{eq:K3} and \eqref{eq:K4} into \eqref{eq:3} and taking square roots, we have
\begin{multline}
\trplnorm{\omega e_t}{Q_d}
+ \lambda_d \| e(T) \|_{1,D} \\
\le \kappa_d I_{D_{2d}}
+ \theta_2 \lambda_d \| e(T) \|_{1,D_{2d}}
+ \theta_2 \trplnorm{e_t}{Q_{2d}}
+ C_0 d^{-1} \trplnorm{e}{1,Q_{2d}} + C d^{-2} \trplnorm{e}{Q_{2d}} \\
+ C \left( H_{Q_{2d}} + X_{Q_{2d}} + G_{Q_{2d}} \right),
\label{eq:energy-3}
\end{multline}
where the constant $C_0$ is independent of $h$, $d$, $D$, and $I$.

\textit{Step 3.} Now we complete the local energy error estimate.
Multiplying \eqref{eq:energy-3} by $2\theta$ and adding it to \eqref{eq:energy-2}, 
we can kick-back the term $\theta \trplnorm{\omega e_t}{Q_{h,T}}$ and obtain 
\begin{multline}
\theta \trplnorm{e_t}{Q} + d^{-1} \trplnorm{e}{1,Q} + 2\theta \lambda_d \| e(T) \|_{1,D} \\
\le
  2 \theta \theta_2 \lambda_d \| e(T) \|_{1,D_{2d}}
+ 2 \theta \theta_2 \trplnorm{e_t}{Q_{2d}}
+ \left( \theta_1 + C_0 \theta \right) d^{-1} \trplnorm{e}{1,Q_{2d}}
+ C d^{-2} \trplnorm{e}{Q_{2d}} \\
+ C \left( \kappa_d I_{D_d} + X_{Q_{2d}} + H_{Q_{2d}} + G_{Q_{2d}} \right).
%
\end{multline}
Since $\theta_1$ and $\theta_2$ are arbitrary positive numbers, 
we set $\theta_1 = C_0 \theta$ and $\theta_2 = C_0 \theta/2$.
Then, we obtain
\begin{multline}
\theta \trplnorm{e_t}{Q} + d^{-1} \trplnorm{e}{1,Q} + \theta \lambda_d \| e(T) \|_{1,D} \\
\le
  2C_0 \theta \left(
  \theta \trplnorm{e_t}{Q_{2d}} + d^{-1} \trplnorm{e}{1,Q_{2d}} + \theta \lambda_d \| e(T) \|_{1,D_{2d}}
  \right)
+ C d^{-2} \trplnorm{e}{Q_{2d}} \\
+ C \left( \kappa_d I_{D_d} + X_{Q_{2d}} + H_{Q_{2d}} + G_{Q_{2d}} \right).
\label{eq:energy-4}
\end{multline}
Finally, 
replacing $2d$ by $d$ and $2C_0$ by $C_0$, respectively, 
we can establish the desired estimate~\eqref{eq:local} and thus we complete the proof of \cref{lem:local}.
\end{proof}


\section{Duality argument}
\label{sec:dual}

In this section, we show \cref{lem:duality}.

\begin{proof}[Proof of \cref{lem:duality}.]
In this proof, we denote the space-time inner products by $[\cdot, \cdot]$.
For example,
\begin{equation}
[u,v]_{Q_{h,T}} = \iint_{Q_{h,T}} u(x,t) v(x,t)\, dx dt,
\quad
a_{Q_{h,T}}[u,v] = \iint_{Q_{h,T}} (\nabla_x u \cdot \nabla_x v + uv) \, dxdt.
\end{equation}
We recall that
\begin{equation}
\trplnorm{F}{Q_{h,j}} = \sup \{ [\phi, F]_{Q_{h,T}} \mid \phi \in C_0^\infty(\bR^{N+1}),\ \supp \phi \subset Q_{h,j}, \ \trplnorm{\phi}{Q_{h,j}} = 1 \} .
\end{equation}
We fix such $\phi \in C_0^\infty(Q_{h,j})$ and consider the dual parabolic problem
\begin{equation}
\begin{cases}
-\partial_t w + A w = \phi, & \text{in } Q_T, \\
\partial_n w = 0, & \text{on } \partial\Omega \times (0,T), \\
w(T) = 0, & \text{in } \Omega.
\end{cases}
\end{equation}
Then, in analogy with \cref{lem:reduce}, we state
\begin{equation}
[\phi, F]_{Q_{h,T}} = (\tw(0), F(0))_\Omegah + \sum_{l=0}^{6} E'_l,
\label{eq:reduce-dual}
\end{equation}
where
\begin{align}
E'_0 &= [\tw - w_h, F_t ]_{Q_{h,T}}
+ a_{Q_{h,T}}[\tw - w_h, F],
& & \\
E'_1 &= [\tw - w_h, \tGamma_t + A\tGamma ]_{Q_{h,T} \setminus Q_T},
&
E'_2 &= [\tw - w_h, \partial_{n_h} \tGamma ]_{\Sigma_{h,T}},
\\
E'_3 &= [\phi + \tw_t - A\tw, F]_{Q_{h,T} \setminus Q_T},
&
E'_4 &= [ - \partial_{n_h} \tw, F]_{\Sigma_{h,T}},
\\
E'_5 &= [ \tw_t, \tGamma]_{Q_{h,T} \setminus Q_T} - [w_t, \Gamma]_{Q_T \setminus Q_{h,T}},
& & \\
E'_6 &= a_{Q_T \setminus Q_{h,T}}[w,\Gamma] - a_{Q_{h,T} \setminus Q_T}[\tw, \tGamma].
& &
\end{align}
for arbitrary $w_h \in V_h$.
We present an outline of its proof.
Noting that $\phi|_{Q_T \setminus Q_{h,T}} \equiv 0$, we have
\begin{align}
[\phi, F]_{Q_{h,T}}
&= [\phi, \tilde{F}]_{Q_T} + [\phi, F]_{Q_{h,T} \setminus Q_T} \\
&= [-w_t, \tilde{F}]_{Q_T} + a_{Q_T}[w, \tilde{F}] + [\phi, F]_{Q_{h,T} \setminus Q_T}
\end{align}
from identity \eqref{eq:gap}.
Again applying \eqref{eq:gap}, integrating by parts both in time and space, and recalling the asymptotic Galerkin orthogonality \eqref{eq:ago}, we have
\begin{equation}
[\phi, F]_{Q_{h,T}} = (\tw(0), F(0))_\Omegah + E'_0 + E'_3 + E'_4
- [w_h, \tGamma_t + A\tGamma]_{Q_{h,T} \setminus Q_T} - [w_h, \partial_\nh \tGamma]_{\Sigma_{h,T}}
\end{equation}
for arbitrary $w_h \in V_h$.
Adding the null terms
\begin{equation}
[\tw, \tGamma_t + A\tGamma]_{Q_{h,T} \setminus Q_T}
- [\tw, \tGamma_t + A\tGamma]_{Q_{h,T} \setminus Q_T}
+ [w, \Gamma_t + A\Gamma]_{Q_T \setminus Q_{h,T}}
(=0)
\end{equation}
to the right hand side, we can obtain \eqref{eq:reduce-dual}.

By estimating each terms in \eqref{eq:reduce-dual}, we show \eqref{eq:dual}.
The treatment of $(\tw(0), F(0))_\Omegah$ is the same as in \cite[Lemma~4.2]{SchTW98} and we have
\begin{equation}
|(\tw(0), F(0))_\Omegah| \le C h^2 d_j^{-\frac{N}{2}-1}.
\label{eq:D-1}
\end{equation}
For the estimates of $E'_l$, we choose $w_h = \tilde{I}_h \tw$, where $\tilde{I}_h$ is the quasi-interpolation operator introduced in Section~\ref{sec:preliminaries}.
Then, $E'_0$ can be addressed as in \cite{SchTW98} and we have
\begin{align}
|E'_0|
&\le \sum_{i,*} \left( \trplnorm{\tw - \tilde{I}_h \tw}{Q_{h,i}} \trplnorm{F_t}{Q_{h,i}} + \trplnorm{\tw - \tilde{I}_h \tw}{1,Q_{h,i}} \trplnorm{F}{1,Q_{h,i}} \right) \\
&\le C \sum_{i,*} \left( h^2 \trplnorm{F_t}{Q_{h,i}} + h \trplnorm{F}{1,Q_{h,i}} \right) \min \left\{ \left( \frac{d_j}{d_i} \right)^{\frac{N}{2}+1}, \left( \frac{d_i}{d_j} \right)^{\frac{N}{2}+1} \right\},
\label{eq:D0}
\end{align}
since $\trplnorm{\tw}{2,Q_{h,i}} \le C \min \{ (d_j d_i^{-1})^{N/2 + 1}, (d_i d_j^{-1})^{N/2 + 1} \}$
owing to \eqref{eq:ext-local} and \eqref{eq:gaussian}.
In order to address other terms, we set $Q''_{h,j} := Q'_{h,j-1} \cup Q'_{h,j} \cup Q'_{h,j+1}$ and $Q'''_{h,j} := Q''_{h,j-1} \cup Q''_{h,j} \cup Q''_{h,j+1}$.
We decompose $E'_1$ as
\begin{align}
E'_1 = [\tw - \tilde{I}_h \tw, \tGamma_t + A \tGamma]_{Q''_{h,j} \setminus Q_T} + [\tw - \tilde{I}_h \tw, \tGamma_t + A \tGamma]_{Q_{h,T} \setminus (Q_T \cup Q''_{h,j})}
=: E'_{1,1} + E'_{1,2}.
\end{align}
Since $\trplnorm{\tw}{2,Q_{h,T}} \le C \trplnorm{\phi}{Q_{h,T}} = C$ by the standard energy estimate, we have, together with \eqref{eq:gap-1},
\begin{equation}
|E'_{1,1}| \le C h^2 \trplnorm{\tGamma_t + A \tGamma}{L_T(\varepsilon) \cap Q''_{h,j}} \le C h^3 d_j^{-\frac{N}{2}-\frac{3}{2}}.
\end{equation}
From \cref{lem:approx}, \eqref{eq:E1-1}, and \eqref{eq:ext-local}, we have
\begin{equation}
|E'_{1,2}| \le C h^2 \| w \|_{W^{2,\infty}(Q_T \setminus Q'_{h,j})}.
\end{equation}
Since we can write
\begin{equation}
w(x,t)= \int_t^T \int_\Omega G(x,y; s-t) \phi(y,s) \, dy ds,
\label{eq:w}
\end{equation}
the Gaussian estimate \eqref{eq:gaussian} and the assumption $\supp \phi \subset Q_{h,j}$ yield
\begin{equation}
\| w \|_{W^{m,\infty}(Q_T \setminus Q'_{h,j})}
\le C |Q_{h,j}|^{1/2} d_j^{-N-m} \le d_j^{-\frac{N}{2}+1-m}.
\label{eq:w-Qj}
\end{equation}
Therefore, we have
\begin{equation}
|E'_1| \le C h^3 d_j^{-\frac{N}{2}-\frac{3}{2}} + C h^2 d_j^{-\frac{N}{2}-1}
\le C h^2 d_j^{-\frac{N}{2}-1}
\label{eq:D1}
\end{equation}
since $h d_j^{-1} \le C_*^{-1} \le 1$.
The estimate of $E'_2$ is similar.
Indeed, we divide $E'_2$ into two parts $E'_2 = E'_{2,1} + E'_{2,2}$, where
\begin{equation}
E'_{2,1} = [\tw - w_h, \partial_{n_h} \tGamma ]_{\Sigma_{h,T} \cap Q''_{h,j}}, \qquad
E'_{2,2} = [\tw - w_h, \partial_{n_h} \tGamma ]_{\Sigma_{h,T} \setminus Q''_{h,j}}.
\end{equation}
Recalling the scaled trace inequality
\begin{equation}
\trplnorm{\psi}{\Sigma_{h,T} \cap Q''_{h,j}}
\le C d_j^{1/2} ( d_j^{-1} \trplnorm{\psi}{Q'''_{h,j}} + \trplnorm{\psi}{1,Q'''_{h,j}} ),
\label{eq:trace-scaled}
\end{equation}
we have
\begin{equation}
\trplnorm{\tw - \tilde{I}\tw}{\Sigma_{h,T} \cap Q''_{h,j}}
\le C d_j^{1/2} h(hd_j^{-1} + 1) \trplnorm{w}{2,Q_T}
\le C hd_j^{1/2}.
\end{equation}
Moreover, by the same calculation as in \eqref{eq:tmp1}, we have
\begin{equation}
\trplnorm{\partial_\nh \tGamma}{\Sigma_{h,T} \cap Q''_{h,j}}
\le C h \left( \trplnorm{\nabla \tGamma}{\Sigma_{h,T} \cap Q'''_{h,j}}
+ \trplnorm{\tGamma}{2,L_T(\varepsilon) \cap Q'''_{h,j}}  \right),
\end{equation}
and the boundary-skin estimates \eqref{eq:gap-1} and \eqref{eq:gap-2} give
\begin{equation}
\trplnorm{\partial_\nh \tGamma}{\Sigma_{h,T} \cap Q'''_{h,j}}
\le C h d_j^{-\frac{N}{2} - \frac{1}{2}},
\end{equation}
which implies
\begin{equation}
|E'_{2,1}| \le C h^2 d_j^{-\frac{N}{2}}.
\end{equation}
Further, since $\| \partial_\nh\tGamma \|_{L^1(\Sigma_{h,T})} \le C h |\log h|$ (\cref{lem:gamma}), we have
\begin{equation}
|E'_{2,2}| \le C h^2 \| w \|_{W^{2,\infty}(Q_T \setminus Q'_{h,j})} \| \partial_\nh\tGamma \|_{L^1(\Sigma_{h,T})}
\le C h^3|\log h| d_j^{-\frac{N}{2}-1}.
\end{equation}
Hence we have
\begin{equation}
|E'_2| \le C h^2 d_j^{-\frac{N}{2} - 1}.
\label{eq:D2}
\end{equation}

We divide $E'_3$ into $E'_3 = E'_{3,1} + E'_{3,2}$, where
\begin{equation}
E'_{3,1} = [-\tw_t + A\tw - \phi, F]_{Q''_{h,j} \setminus Q_T},
\quad
E'_{3,2} = [-\tw_t + A\tw, F]_{Q_{h,T} \setminus (Q_T \cup Q''_{h,j})}.
\end{equation}
From the energy estimates, $\trplnorm{ {-}\tw_t + A\tw - \phi}{Q_{h,T}} \le C$.
Moreover, from \eqref{eq:tube-4-loc} and the scaled trace inequality \eqref{eq:trace-scaled}, we have
\begin{equation}
\trplnorm{F}{Q''_{h,i} \setminus Q_T}
\le C (hd_j^{-1} \trplnorm{F}{Q'''_{h,j}} + h \trplnorm{\nabla F}{Q'''_{h,j}}).
\end{equation}
Thus we have
\begin{equation}
|E'_{3,1}| \le C (hd_j^{-1} \trplnorm{F}{Q'''_{h,j}} + h \trplnorm{\nabla F}{Q'''_{h,j}}).
\end{equation}
The expression \eqref{eq:w} and the Gaussian estimate \eqref{eq:gaussian} yield
\begin{equation}
\| {-}\tw_t + A\tw \|_{L^\infty(Q_{h,T} \setminus (Q_T \cup Q''_{h,j}))} \le C d_j^{-\frac{N}{2}-1}
\end{equation}
and \eqref{eq:tube-4} implies
\begin{equation}
\| F \|_{L^1(Q_{h,T} \setminus (Q_T \cup Q''_{h,j}))} \le C h^2 \| F \|_{L^1(0,T; W^{1,1}(\Omega_h))}.
\end{equation}
Hence we have
\begin{equation}
|E'_{3,2}| \le C h^2 d_j^{-\frac{N}{2}-1} \| F \|_{L^1(0,T; W^{1,1}(\Omega_h))},
\end{equation}
which yields
\begin{equation}
|E'_3| \le C (hd_j^{-1} \trplnorm{F}{Q'''_{h,j}} + h \trplnorm{\nabla F}{Q'''_{h,j}})
+ C h^2 d_j^{-\frac{N}{2}-1} \| F \|_{L^1(0,T; W^{1,1}(\Omega_h))}.
\label{eq:D3}
\end{equation}
Similarly, we can observe
\begin{equation}
|E'_4| \le C (hd_j^{-1} \trplnorm{F}{Q'''_{h,j}} + h \trplnorm{\nabla F}{Q'''_{h,j}})
+ C h d_j^{-\frac{N}{2}} \| F \|_{L^1(0,T; W^{1,1}(\Omega_h))}
\label{eq:D4}
\end{equation}
owing to \eqref{eq:tube-2-loc} and the trace inequality with scaling \eqref{eq:trace-scaled}.
Here, we perform a calculation similar to \eqref{eq:tmp1} to address $\partial_{n_h} \tw$.

The treatment of $E'_5$ and $E'_6$ is the same as above.
Indeed, we have
\begin{equation}
|E'_5|
\le |[\tw_t, \tGamma]_{L_T(\varepsilon) \cap Q''_{h,j}}|
+ |[\tw_t, \tGamma]_{L_T(\varepsilon) \setminus Q''_{h,j}}| =: E'_{5,1} + E'_{5,2},
\end{equation}
with the estimates
\begin{equation}
E'_{5,1} \le C \trplnorm{w_t}{Q_T} \trplnorm{\tGamma}{L_T(\varepsilon) \cap Q''_{h,j}} \le C h d_j^{-\frac{N}{2}+\frac{1}{2}}
\end{equation}
from the boundary-skin estimate \eqref{eq:gap-1} and the energy estimate, and
\begin{equation}
E'_{5,2} \le C \| w_t \|_{L^\infty(Q_T \setminus Q'_{h,j})} \| \tGamma \|_{L^1(L_T(\varepsilon))}
\le C h^2 d_j^{-\frac{N}{2}-1}
\end{equation}
from \cref{lem:gamma} and the expression \eqref{eq:w}.
Thus we have
\begin{equation}
|E'_5| \le C h d_j^{-\frac{N}{2}+\frac{1}{2}}.
\label{eq:D5}
\end{equation}
Furthermore, we can write $|E'_6| \le E'_{6,1} + E'_{6,2}$, where
\begin{equation}
E'_{6,1} = \trplnorm{\tw}{1,L_T(\varepsilon) \cap Q''_{h,j}}
\trplnorm{\tGamma}{1,L_T(\varepsilon)\cap Q''_{h,j}}
\end{equation}
and
\begin{equation}
E'_{6,2} = \| \tw \|_{W^{1,\infty}(L_T(\varepsilon) \setminus Q''_{h,j})}
\| \tGamma \|_{W^{1,1}(L_T(\varepsilon) \setminus Q''_{h,j})}.
\end{equation}
From \eqref{eq:tube-3} and gap estimate \eqref{eq:gap-1}, we have
\begin{equation}
E'_{6,1} \le C h \trplnorm{w}{2,Q_T} \trplnorm{\tGamma}{1,L_T(\varepsilon) \cap Q''_{h,j}}
\le C h^2 d_j^{-\frac{N}{2} - \frac{1}{2}}.
\end{equation}
Also, \eqref{eq:w-Qj} and \eqref{eq:gap-1} yield
\begin{equation}
E'_{6,2} \le C \| w \|_{W^{1,\infty}(Q_T \setminus Q'_{h,j})} \sum_{i,*} \| \tGamma \|_{W^{1,1}(L_T(\varepsilon) \cap Q_{h,i})}
\le C h^2 |\log h| d_j^{-\frac{N}{2}}.
\end{equation}
Thus we have
\begin{equation}
|E'_6| \le C h d_j^{-\frac{N}{2} + \frac{1}{2}}.
\label{eq:D6}
\end{equation}
Summarizing \eqref{eq:D-1}, \eqref{eq:D0}, \eqref{eq:D1}, \eqref{eq:D2}, \eqref{eq:D3}, \eqref{eq:D4}, \eqref{eq:D5}, and \eqref{eq:D6}, we can obtain \eqref{eq:dual},
since we can replace $Q'''_{h,j}$ by $Q'_{h,j}$ in \eqref{eq:D3} and \eqref{eq:D4} by changing the width of extension of domains.
Hence we can complete the proof of \cref{lem:duality}.
\end{proof}

\section{Proofs of \cref{thm:semigroup,thm:dmr}}
\label{sec:cor}

At this stage, we can show \cref{thm:semigroup} in the same way as \cite[Proposition~3.2]{SchTW98}.
Indeed, it suffices to show $\| tF_{tt} \|_{L^1(Q_{h,T})} \le C$ for $T \le 1$, which can be obtained from \cref{lem:local} and an argument similar to the previous section.
Hence we omit the proof and we address \cref{thm:dmr} here.

\begin{proof}[Proof of \cref{thm:dmr}]
As mentioned in the last part of Section~\ref{sec:proof}, we may assume $T \le 1$. 
Moreover, it suffices to show \eqref{eq:dmr} for the case $p=q$ by the general theory of maximal regularity (cf.~\cite[Theorem~4.2]{Dor93}).

Let us recall that $u_h \in C^0([0,T]; V_h)$ is the solution of
\begin{equation}
\begin{cases}
(u_{h,t}(t), v_h)_{\Omega_h} + a_\Omegah(u_h(t), v_h) = (f_h(t), v_h)_{\Omega_h}, & \forall v_h \in V_h, \\
u_h(0) = 0,
\end{cases}
\end{equation}
for given $f_h \in L^p(0,T;V_h)$.
Thus we have a representation
\begin{equation}
u_h(t) = \int_0^t A_h e^{-(t-s)A_h} f_h(s) ds,
\end{equation}
which implies
\begin{equation}
(-A_h u_h)(x,t) = \int_0^t \int_\Omegah \partial_t \Gamma_{x,h}(y,t-s) f_h(y,s) dyds
=: (\partial_t \Gamma_{x,h} * f_h)(x,t)
,
\quad (x,t) \in Q_{h,T},
\end{equation}
where $\Gamma_{x,h}$ is the discretized regularized Green's function defined by \eqref{eq:disc-gamma} for $x_0 = x \in \Omega_h$.
Therefore, maximal regularity is equivalent to the $L^p(Q_{h,T})$-boundedness of the convolution operator with respect to $\partial_t \Gamma_{x,h}$.
Moreover, \cref{lem:green} yields
\begin{equation}
\| \partial_t \Gamma_{x,h} * f_h \|_{L^p(Q_{h,T})}
\le C \| f_h \|_{L^p(Q_{h,T})} + \| \partial_t \tGamma_x * f_h \|_{L^p(Q_{h,T})},
\end{equation}
where $\Gamma_x$ is regularized Green's function defined by \eqref{eq:gamma} with respect to $x_0 = x \in \Omegah$, $\tGamma_{x,h}$ is its appropriate extension to $\Omegah$, and
\begin{equation}
(\partial_t \tGamma_x * f_h)(x,t) :=
\int_0^t \int_\Omegah \partial_t \tGamma_x(y,t-s) f_h(y,s) dyds
,
\quad (x,t) \in Q_{h,T}.
\end{equation}
Thus, what remains to show is
\begin{equation}
\| \partial_t \tGamma_x * f_h \|_{L^p(Q_{h,T})}
\le C \| f_h \|_{L^p(Q_{h,T})},
\quad \forall f_h \in L^p(Q_{h,T})
\label{eq:dmr-1}
\end{equation}
uniformly with respect to $h$.

Let
\begin{equation}
(\partial_t \Gamma_x * f)(x,t) :=
\int_0^t \int_\Omega \partial_t \Gamma_x(y,t-s) f(y,s) dyds
,
\quad (x,t) \in Q_T
\end{equation}
for $f \in L^p(Q_T)$.
Then, from the argument in \cite[pp.~685--686]{Gei06}, we have
\begin{equation}
\| \partial_t \Gamma_x * f \|_{L^p(Q_T)}
\le C \| f \|_{L^p(Q_T)},
\quad \forall f \in L^p(Q_T)
\end{equation}
uniformly with respect to $h$ for $p \in (1,\infty)$.
Now, we show \eqref{eq:dmr-1}.
For $f_h \in L^p(0,T;V_h)$, let $\tf_h \in L^p(Q_T)$ be the zero-extension of $f_h$.
Then,
\begin{equation}
(\partial_t \tGamma_x * f_h)(x,t) =
(\partial_t \Gamma_x * \tf_h)(x,t)
+
\Phi(x,t)
\end{equation}
for $(x,t) \in Q_{h,T}$, where
\begin{equation}
\Phi(x,t) = \int_0^t \int_\OmghOmg \partial_t \tGamma_x(y,t-s) f_h(y,s) dyds.
\end{equation}
Thus, we have
\begin{align}
\| \partial_t \tGamma_x * f_h \|_{L^p(Q_{h,T})}
&\le \| \partial_t \Gamma_x * \tf_h \|_{L^p(Q_T)}
+ \| \partial_t \Gamma_x * \tf_h \|_{L^p(Q_{h,T} \setminus Q_T)}
+ \| \Phi \|_{L^p(Q_{h,T})} \\
&\le C \| f_h \|_{L^p(Q_{h,T})}
+ \| \partial_t \Gamma_x * \tf_h \|_{L^p(Q_{h,T} \setminus Q_T)}
+ \| \Phi \|_{L^p(Q_{h,T})}.
\label{eq:dmr-2}
\end{align}
As in the proof of the Young inequality for convolution operators, one can see
\begin{multline}
\| \partial_t \Gamma_x * \tf_h \|_{L^p(Q_{h,T} \setminus Q_T)}
\le
\max_{x \in \OmghOmg} \left( \iint_{Q_T} | \partial_t \Gamma_x (y,s) | dyds \right)^{1/p'} \\
\times
\max_{y \in \Omega} \left( \iint_{Q_{h,T} \setminus Q_T} | \partial_t \Gamma_x (y,t) | dxdt \right)^{1/p}
\| f_h \|_{L^p(Q_{h,T})}
\end{multline}
and
\begin{multline}
\| \Phi \|_{L^p(Q_{h,T})}
\le
\max_{x \in \Omega_h} \left( \iint_{Q_{h,T} \setminus Q_T} | \partial_t \tGamma_x (y,s) | dyds \right)^{1/p'} \\
\times
\max_{y \in \OmghOmg} \left( \iint_{Q_{h,T}} | \partial_t \tGamma_x (y,t) | dxdt \right)^{1/p}
\| f_h \|_{L^p(Q_{h,T})},
\end{multline}
where $p'$ fulfills $1/p + 1/p' = 1$.
Here, we should discuss the measurability and integrability of $\partial_t \Gamma_x (y,t)$ with respect to $(x,t) \in Q_{h,T}$.
Fix $K \in \cT_h$ arbitrarily.
Then, $x \mapsto \bdelta_x(y)$ is Lipschitz continuous with Lipschitz constant which may depend on $h$ and $y$ by its construction \cite[Appendix]{SchSW96}.
Thus, by the maximum principle, we have
\begin{equation}
\| \partial_t (\Gamma_{x_1}(\cdot, t) - \Gamma_{x_2}(\cdot,t)) \|_{L^\infty(\Omega)}
\le \| (-\Delta +1) (\bdelta_{x_1} - \bdelta_{x_2}) \|_{L^\infty(\Omega)}
\le C_h |x_1-x_2|
\end{equation}
for arbitrary $x_1, x_2 \in K$ and $t>0$.
Further, $\partial_t \Gamma_x(y,t)$ is sufficiently smooth with respect to $t>0$.
Therefore, the function $(x,t) \mapsto \partial_t \Gamma_x(y,t)$ is piecewise continuous for each $y$ and $h$, and thus measurable and integrable.

We here address $\iint_{Q_{h,T} \setminus Q_T} | \partial_t \Gamma_x (y,t) | dxdt$ only.
As in \eqref{eq:dyadic2}, we define $Q_j(y)$ and $Q_*(y)$ as the parabolic dyadic decomposition centered at $(y,0)$, i.e.,
\begin{equation}
Q_{h,j}(y) := \{ (x,t) \in Q_{h,T} \mid d_j \le \rho_y(x,t) \le 2d_j \},
\quad
Q_{h,*}(y) := \{ (x,t) \in Q_{h,T} \mid \rho_y(x,t) \le d_{J_*} \},
\end{equation}
where $\rho_y(x,t) = \max\{ |x-y|, \sqrt{t} \}$.
Then, as discussed in the proof of \cref{lem:gap}, for $(x,t) \in Q_{h,j}(y)$, we have
\begin{equation}
| \partial_t \Gamma_x (y,t) | \le C d_j^{-N-2},
\end{equation}
which implies
\begin{equation}
\iint_{Q_{h,j}(y) \setminus Q_T} | \partial_t \Gamma_x (y,t) | dxdt \le C h^2d_j^{-1}.
\end{equation}
Furthermore, since the elliptic operator $-\Delta + I$ with the Neumann boundary condition generates a bounded semigroup in $C^0(\overline{\Omega})$, we have
\begin{equation}
\sup_{y \in \Omega} |\partial_t \Gamma_x(y,t)|
\le C \sup_{y \in \Omega} |(-\Delta + I) \bar{\delta}_x(y)|
\le C h^{-N-2}
\label{eq:DtGamma}
\end{equation}
uniformly with respect to $x \in \Omega_h$, which yields
\begin{equation}
\iint_{Q_{h,*}(y) \setminus Q_T} | \partial_t \Gamma_x (y,t) | dxdt
\le C |Q_{h,*}(y) \setminus Q_T| h^{-N-2}
\le C h
\end{equation}
on the innermost set $Q_{h,*}(y)$.
Therefore, we obtain
\begin{equation}
\iint_{Q_{h,T} \setminus Q_T} | \partial_t \Gamma_x (y,t) | dxdt
\le Ch + C \sum_j h^2 d_j^{-1}
\le C h
\end{equation}
owing to \eqref{eq:sum}, where the constant $C$ is independent of $y$ and $h$.
The treatment of the other terms is similar and we can derive
\begin{align}
\max_{x \in \OmghOmg} \iint_{Q_T} | \partial_t \Gamma_x (y,s) | dyds
&\le C |\log h|,\\
\max_{x \in \Omega_h} \iint_{Q_{h,T} \setminus Q_T} | \partial_t \tGamma_x (y,s) | dyds
&\le C h, \\
\max_{y \in \OmghOmg} \iint_{Q_{h,T}} | \partial_t \tGamma_x (y,t) | dxdt
&\le C |\log h|,
\end{align}
with the constant $C$ independent of $h$.
Consequently, we have
\begin{equation}
\| \partial_t \Gamma_x * \tf_h \|_{L^p(Q_{h,T} \setminus Q_T)}
+ \| \Phi \|_{L^p(Q_{h,T})}
\le C \| f_h \|_{L^p(Q_{h,T})}
\label{eq:dmr-3}
\end{equation}
for $p \in (1,\infty)$.
Substituting \eqref{eq:dmr-3} into \eqref{eq:dmr-2}, we can obtain \eqref{eq:dmr-1}.
Hence we complete the proof of \cref{thm:dmr}.
\end{proof}



%
%
%
%

\appendix

\section{Proof of \eqref{eq:sum-trace-T}}
\label{sec:appendix}

We show the auxiliary estimate \eqref{eq:sum-trace-T}.

\begin{proof}[Proof of \eqref{eq:sum-trace-T}.]
Let 
\begin{equation}
\Lambda_{h,T} := \{ j \le J_* \mid D_{h,j} \ne \emptyset \}, \quad
J_1 := \max \Lambda_{h,T}, \quad
\Lambda'_{h,T} := \{ j \le J_* \mid D'_{h,j} \ne \emptyset \}.
\end{equation}
Then, by definition, $\Lambda_{h,T} = \{ 0,1, \dots, J_1 \}$, $\Lambda'_{h,T} = \{ 0,1, \dots, J_1+1 \}$, and $D'_{h,J_1+1} = D_{h,J_1}$ (cf.~Figure~\ref{fig:Dhj}).
Moreover, $D'_{h,j} = D_{h,j-1} \cup D_{h,j} \cup D_{h,j+1}$ and $|D_{h,i} \cap D_{h,j}| = 0$ if $i \ne j$.
Hence, we have
\begin{align}
\sum_j \lambda'_j d_j^{\frac{N}{2}+2} \| F(T) \|_{1, D'_{h,j}}
&
\begin{multlined}[t]
\le 
d_0^{\frac{N}{2}+2} \left( \| F(T) \|_{1, D_{h,0}} + \| F(T) \|_{1, D_{h,1}} \right) \\
\begin{aligned} 
&+ \sum_{j=1}^{J_1-1} d_j^{\frac{N}{2}+2} 
\left( \| F(T) \|_{1, D_{h,j-1}} + \| F(T) \|_{1, D_{h,j}} + \| F(T) \|_{1, D_{h,j+1}} \right) \\
&+ d_{J_1}^{\frac{N}{2}+2} \left( \| F(T) \|_{1, D_{h,J_1-1}} + \| F(T) \|_{1, D_{h,J_1}} \right) 
+ d_{J_1+1}^{\frac{N}{2}+2} \| F(T) \|_{1, D'_{h,J_1+1}}
\end{aligned}
\end{multlined} \\
&\le 3 \cdot 2^{\frac{N}{2}+2} \sum_{j=0}^{J_1} d_j^{\frac{N}{2}+2} \| F(T) \|_{1, D_{h,j}},
\end{align}
which completes the proof.
\end{proof}

\begin{figure}[htb]
\centering
\includegraphics{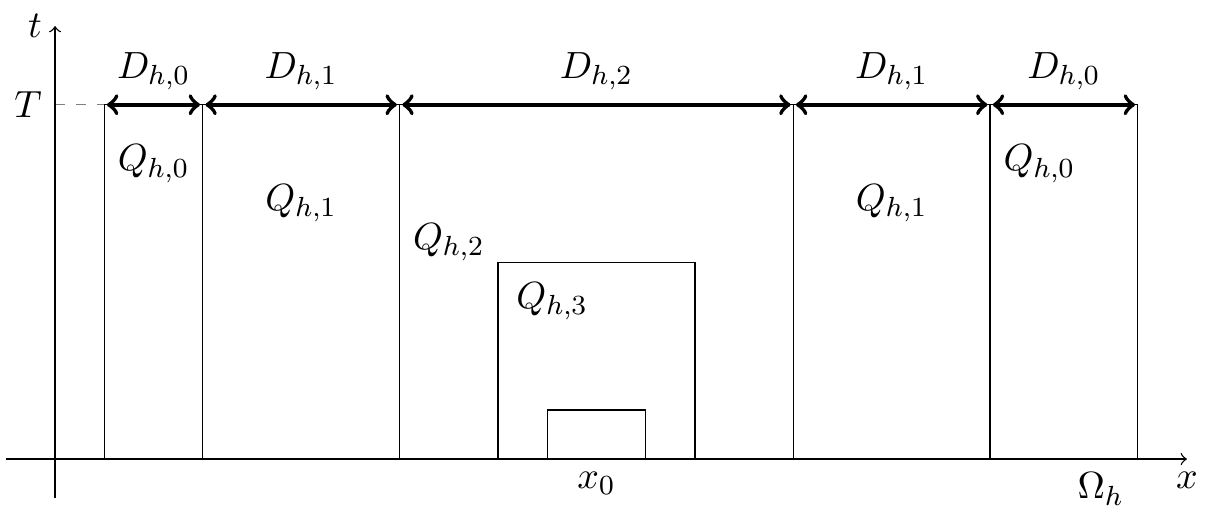}
\caption{Illustration of $D_{h,j}$. In this figure, $J_1 = 2$.}
\label{fig:Dhj}
\end{figure}

\bibliographystyle{plain}
\bibliography{Linfty}

\end{document}